\title{Geometry of symplectic log Calabi-Yau pairs} 
\newtheorem{theorem}{Theorem}[section]
\newtheorem{lemma}[theorem]{Lemma}
\newtheorem{cor}[theorem]{Corollary}
\newtheorem{prop}[theorem]{Proposition}
\newtheorem{conj}[theorem]{Conjecture}
\newtheorem{definition}[theorem]{Definition}
\newtheorem{speculation}[theorem]{Speculation}
\newtheorem{example}[theorem]{Example}
\theoremstyle{remark}
\newtheorem{remark}[theorem]{Remark}
\newtheorem*{claim}{Claim}
\numberwithin{equation}{section}
\newcommand{\C}{\mathbb{C}}
\newcommand{\R}{\mathbb{R}}
\newcommand{\Q}{\mathbb{Q}}
\newcommand{\Z}{\mathbb{Z}}
\newcommand{\thmone}{\begin{theorem}}
\newcommand{\thmtwo}{\end{theorem}}
\newcommand{\lemmaone}{\begin{lemma}}
\newcommand{\lemmatwo}{\end{lemma}}
\newcommand{\pfone}{\begin{proof}}
\newcommand{\pftwo}{\end{proof}}
\newcommand{\defone}{\begin{definition}}
\newcommand{\deftwo}{\end{definition}}
\newcommand{\corone}{\begin{cor}}
\newcommand{\cortwo}{\end{cor}}
\newcommand{\cone}{\begin{claim}}
\newcommand{\ctwo}{\end{claim}}
\newcommand{\propone}{\begin{prop}}
\newcommand{\proptwo}{\end{prop}}
\newcommand{\eqone}{\begin{equation}}
\newcommand{\eqtwo}{\end{equation}}
\newcommand{\rmkone}{\begin{remark}}
\newcommand{\rmktwo}{\end{remark}}
\newcommand{\enone}{\begin{enumerate}}
\newcommand{\entwo}{\end{enumerate}}
\newcommand{\itone}{\begin{itemize}}
\newcommand{\ittwo}{\end{itemize}}
\newcommand{\onehalf}{\left(\begin{array}{cc}}
\newcommand{\theother}{\end{array}\right)}
\newcommand{\oneeq}{\begin{equation}}
\newcommand{\twoeq}{\end{equation}}
\definecolor{red}{rgb}{1,0,0}          %
\definecolor{green}{rgb}{0,1,0}       %
\definecolor{blue}{rgb}{0,0,1}         
\definecolor{purp}{rgb}{1,0.1,1}       
\author{Tian-Jun Li \& Cheuk Yu Mak}
\date{May 5, 2018}
\begin{document}

\maketitle

\begin{abstract}
 We will survey some aspects of the smooth topology, algebraic geometry, symplectic geometry and contact geometry of anti-canonical pairs in complex dimension two.
\end{abstract}

{\tableofcontents}
\section{Introduction}

Let $Y$ be a smooth rational surface and let $D\subset Y$   be an effective reduced anticanonical divisor.
Such pairs $(Y, D)$, called anti-canonical pairs,  have a rich geometry. 
They were first investigated systematically by Looijenga,  and by Friedman etc  in the 80s. 
Note that $Y-D$ comes with a canonical (up to scaling) nowhere-vanishing 2-form $\Omega$ with simple poles along $D$.
When the intersection matrix of $D$ is negative definite, $D$ can be contracted and $Y$ becomes  a
singular analogue of a K3 surface (a
normal complex analytic surface with  trivial dualizing sheaf). 
   Motivated  by mirror symmetry,  Gross, Hacking and Keel  introduced important new ideas  in a series of papers on log Calabi-Yau varieties, beginning with \cite{GrHaKe11} and \cite{GrHaKe12}.
In particular, they  proved   Torelli type results  in \cite{GrHaKe12} conjectured by Friedman.
In this regard, it was shown in \cite{Pa13} that the symplectic cohomology of $X-D$  is canonically isomorphic to the vector space of global sections of the structure sheaf of its mirror.
Readers are also referred to \cite{Auroux}, \cite{GHKK}, \cite{GHS} and the references therein for more about this mirror symmetry story.

We have a more topological flavour and 
we will survey some other aspects of the smooth topology, algebraic geometry, symplectic geometry and contact geometry of  anti-canonical pairs in Sections 2, 3, 4, 5 respectively. 

Let $X$ be a smooth, oriented 4 dimensional manifold. 
 A topological divisor of $X$ refers to a connected configuration of finitely many closed embedded, oriented, labeled  smooth surfaces $D=C_1 \cup \dots \cup C_k$ 
in    $X$ such that  
each intersection between two surfaces  is transversal and positive,
no three $C_i$  intersect at a common point,
and $D$ has  empty intersection with $ \partial X$.
A topological divisor $D$  is often described by a plumbing graph  with vertices corresponding to the surfaces $C_i$ and
edges corresponding to intersection points.  Associated to $D$ there are plumbed neighborhoods $N_D$ as well as the boundary
plumbed 3-manifold $Y_D$, which are all well-defined 
up to orientation-preserving diffeomorphisms.

Given a topological divisor  $D=C_1 \cup \dots \cup C_k$ 
in    $X$, we      use $[C_i]$ to denote the homology class of $C_i$ in $H_2(X)$ and $H_2(N_D)$,  $r(D)=k$ to denote the length of $D$, and $S(D)=(s_1,\cdots, s_{r(D)})$ to denote the sequence of self-intersection numbers. $H_2(N_D)$ is freely generated by $C_i$. 
The intersection matrix of $D$ is the $k$ by $k$ square matrix $Q_D=(s_{ij}=[C_i]\cdot [C_{j}])$, where $\cdot$ is used for any of the pairings $H_2(X) \times H_2(X), H^2(X) \times H_2(X), H^2(X) \times H^2(X, \partial X)$. Via the Lefschetz duality for  $N_D$, the intersection matrix $Q_D$ can  be identified with the natural  homomorphism $Q_D: H_2(N_D)\to H_2(N_D, Y_D)$.  We use homology and cohomology with $\Z$ coefficient unless otherwise specified. 

For a symplectic 4-manifold $(X, \omega)$ a symplectic divisor is a topological divisor $D$ with each $C_i$  symplectic and having the orientation 
  positive with respect to $\omega$.
 Let $K_{\omega}$ be the symplectic canonical class of $(X, \omega)$. 

\begin{definition}
A {symplectic log Calabi-Yau pair} $(X,D,\omega)$ is a closed symplectic 4-manifold $(X,\omega)$ together with a nonempty symplectic divisor $D=\cup C_i$ representing the  Poincare dual of $-K_{\omega}$.
 A  symplectic log Calabi-Yau pair  is called a A {symplectic Looijenga pair}  if  each $C_i$  is a sphere, called an elliptic log Calabi-Yau pair if $D$  is a torus. 
\end{definition}

Here are some quick observations, which have  well known analogues  in the holomorphic category.  

\begin{lemma}
For a symplectic log Calabi-Yau pair $(X, D, \omega)$, 

$\bullet$ $c_1(X-D, \omega)=0$, and  $(X-D, \omega)$ is minimal in the sense it has no symplectic sphere with self-intersection $-1$. 

$\bullet $  $D=\cup C_i$ is either a torus or a cycle of spheres.

$ \bullet$  $(X, \omega)$ is a rational or elliptic ruled symplectic 4-manifold. In particular,  $\kappa(X, \omega)=-\infty$.
  $D$ is a  cycle of spheres only  when $(X, \omega)$ is rational.
  
  $\bullet$  $b^+(Q_D)=0$ or $1$.

\end{lemma}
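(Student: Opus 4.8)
The plan is to derive all four statements from two inputs: the defining relation $-K_\omega = \mathrm{PD}[D]$ and the adjunction equality $2g(C)-2 = K_\omega\cdot[C] + [C]\cdot[C]$, valid for any closed embedded connected symplectic surface $C$. For the first bullet, since $c_1(X,\omega) = -K_\omega = \mathrm{PD}[D]$, I would use the exactness of $H^2(X, X\setminus D)\to H^2(X)\to H^2(X\setminus D)$: the class $\mathrm{PD}[D]$ lifts to a class supported on $D$, so it restricts to $0$ on $X\setminus D$, giving $c_1(X\setminus D,\omega)=0$. For minimality, a symplectic $(-1)$-sphere $E\subset X\setminus D$ would be disjoint from $D$, hence $[E]\cdot[D]=0$ and $K_\omega\cdot[E] = -\mathrm{PD}[D]\cdot[E]=0$, contradicting the adjunction value $K_\omega\cdot[E]=-1$ for a $(-1)$-sphere.

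For the second bullet I substitute $K_\omega\cdot[C_i] = -[D]\cdot[C_i]$ into adjunction to obtain $2g_i - 2 = -\sum_{j\neq i}[C_i]\cdot[C_j] = -d_i$, where $d_i\geq 0$ counts the intersection points of $C_i$ with the other components (all intersections being positive). A single component forces $d_1=0$, hence $g_1=1$ and $D$ is a torus. With at least two components, connectedness of $D$ gives $d_i\geq 1$, so $g_i=0$ and $d_i=2$ for every $i$; the intersection graph is then connected and $2$-regular, and has no loops because the $C_i$ are embedded, so it is a single cycle and $D$ is a cycle of spheres.

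For the third bullet I would first note $K_\omega\cdot[\omega] = -[D]\cdot[\omega] = -\sum_i\int_{C_i}\omega < 0$, since $D$ is nonempty and each $C_i$ is $\omega$-positive; this yields $\kappa(X,\omega)=-\infty$ and, by the classification of symplectic $4$-manifolds with $\kappa=-\infty$ (Liu, Li--Liu), that $(X,\omega)$ is rational or ruled. To pin down the dichotomy, suppose $X$ is ruled over $\Sigma_g$ with fiber class $F$, so $K_\omega\cdot F = -2$ and $[D]\cdot F = 2$. If $g\geq 1$, then $\pi_2(\Sigma_g)=0$ forces every sphere $C_i$ to have $\pi_*[C_i]=0$, i.e. $[C_i]\cdot F=0$; were $D$ a cycle of spheres this would give $[D]\cdot F = 0$, contradicting $[D]\cdot F = 2$, so a cycle of spheres forces $g=0$ and $X$ rational. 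In the torus case, $\pi|_D\colon T\to\Sigma_g$ is a positive-degree branched cover, which by Riemann--Hurwitz ($0 = 2(2-2g)-R$ with ramification $R\geq 0$) requires $g\leq 1$, so $g=1$ and $X$ is elliptic ruled. For the fourth bullet, $Q_D$ is the Gram matrix of $[C_1],\dots,[C_k]$ for the intersection pairing on $H_2(X;\R)$, so $b^+(Q_D)$ equals $b^+$ of that pairing restricted to the span of the $[C_i]$, which is at most $b^+(X)=1$ since $X$ is rational or ruled; hence $b^+(Q_D)\in\{0,1\}$.

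The main obstacle is the fine classification in the third bullet: everything else is adjunction and elementary algebraic topology, but separating rational from elliptic ruled relies on the deep symplectic rational/ruled theorem as a black box together with the branched-cover bookkeeping. I would take particular care that $[C_i]\cdot F$ genuinely computes the degree of $\pi|_{C_i}$, so that positivity of intersections and $\pi_2(\Sigma_g)=0$ can be applied, and that this remains valid for non-trivial $S^2$-bundles and their blow-ups, not only for products.
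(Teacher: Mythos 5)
Your proposal is correct and follows essentially the same route as the paper: adjunction yields the first two bullets, $K_\omega\cdot[\omega]<0$ together with the rational/ruled classification and the fiber-degree count yields the third, and $b^+(X)=1$ yields the fourth. The only imprecision is calling $\pi|_D\colon T\to\Sigma_g$ a ``positive-degree branched cover'' --- a smooth map of nonzero degree need not be one --- but the conclusion $g\leq 1$ still follows from the standard fact that a nonzero-degree map makes $\pi^*$ injective on $H^1(\Sigma_g;\R)$ (or from Kneser's inequality), so this is cosmetic.
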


\begin{proof} The vanishing of $c_1(X-D)$ follows directly from the definition and 
$X-D$ being minimal follows directly from the adjunction formula. The 2nd bullet is also proved by the adjunction formula. 
Let $g_i$ be the genus of $C_i$. Then 
$$      -[C_i]\cdot [C_i]- \sum_{j\ne i}  [C_j]\cdot [C_i]=K_{\omega}\cdot [C_i]=-[C_i]\cdot [C_i]+2g_i-2.$$
So $2g_i-2=- \sum_{j\ne i}  [C_j]\cdot [C_i] \leq 0$, namely, $g_i\leq 1$ for each $i$. If $g_i=1$ for some $i$, then
$ \sum_{j\ne i}  [C_j]\cdot [C_i]=0$ which implies that $C_i$ is the only component. The remaining case is that $g_i=0$ for each $i$. In this case,  $ \sum_{j\ne i}  [C_j]\cdot [C_i]=2$ for each $i$ and clearly  $D$ is a cycle of spheres. 

Since $D$ is a nonempty symplectic divisor representing $-K_{\omega}$ we have $K_{\omega}\cdot [\omega]< 0$. It  follows from  \cite{Liu96}, \cite{OhOn96}   that $(X, \omega)$ is rational or ruled and admits a genus $0$ Lefschetz fibration over a Riemann surface $\Sigma$. Let $F$ be the fibre class. 
Since $K_{\omega}\cdot F=-2$ and $D$ represents $-K_{\omega}$ the projection of $D$ to $\Sigma$  
 has nonzero degree.  
Since  $D=\cup C_i$ is either a torus or a cycle of spheres, the  genus of $\Sigma$  is at most $1$.

The last bullet follows from the fact that $b^+(X)=1$. 
\end{proof}

Therefore elliptic pairs and  Looijenga pairs  are exactly the symplectic log Calabi-Yau pairs with length $1$ and at least $2$ respectively. 
We remark that symplectic log Calabi-Yau pairs have vanishing  relative symplectic Kodaira dimension (cf. \cite{LiZh11}).
The following is the main result in \cite{LiMa16-deformation}.

\begin{theorem}[Symplectic deformation] \label{thm: symplectic deformation class=homology classes}
Two symplectic log Calabi-Yau pairs 
 are symplectic deformation equivalent  
if they are   homologically equivalent. In particular, each symplectic deformation class  contains a K\"ahler   pair. 

Moreover,  two symplectic log Calabi-Yau pairs are strictly symplectic deformation equivalent if they are strictly homologically equivalent.


\end{theorem}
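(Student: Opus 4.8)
The plan is to exploit the rational-or-ruled dichotomy from the Lemma together with the rigidity of symplectic forms on such $4$-manifolds: on a rational or ruled surface the deformation class of a symplectic form is governed by its cohomology class, so the whole problem reduces to placing the divisor in a standard position by pseudoholomorphic curves and blow-downs, and then matching the ambient forms. I would first use the homological equivalence to fix an orientation-preserving diffeomorphism $\Phi\colon X\to X'$ sending $[C_i]$ to $[C_i']$ and $K_\omega$ to $K_{\omega'}$. Pulling back by $\Phi$, the statement becomes: on a single manifold $X$, any two symplectic log Calabi-Yau structures $(D,\omega)$ and $(D_1,\omega_1)$ whose divisors lie in the same homology classes are connected by a symplectic deformation carrying $D$ to $D_1$.

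The first technical reduction is to make each divisor holomorphic. Because $D$ is a symplectic divisor with only positive transverse intersections and no triple points, there is an $\omega$-compatible almost complex structure $J$ for which every $C_i$ is $J$-holomorphic, and similarly $J_1$ for $D_1$. The backbone of the argument is then a reduction to minimal pairs: whenever $(X,\omega)$ is non-minimal I would select a symplectic $(-1)$-sphere in $J$-generic position relative to $D$ and blow it down, obtaining a symplectic log Calabi-Yau pair of smaller Euler characteristic whose homological type is determined by that of $(X,D)$. Iterating on both pairs simultaneously, they descend to minimal anticanonical pairs on $\PP^2$, $\PP^1\times\PP^1$, a rational ruled surface, or an elliptic ruled surface, where the admissible configurations form a short explicit list (a smooth anticanonical torus, a triangle of lines and its analogues, or a standard fibre-plus-section cycle). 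In the cycle-of-spheres case the ruling by $J$-holomorphic fibres coming from the genus-$0$ Lefschetz fibration produced in the Lemma exhibits the minimal model as a toric surface with $D$ its toric boundary, and the matching of the two pairs on each base case is then an explicit verification that simultaneously produces the K\"ahler representative asserted in the ``in particular'' clause.

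The main obstacle is the uniqueness up to symplectic isotopy of the $J$-holomorphic representatives of the divisor classes, together with control of the complement. For components of non-negative self-intersection the relevant moduli spaces are automatically regular in dimension four, but the deeply negative arms of a long cycle of spheres are rigid, and the clean way around this is the symplectic neighbourhood theorem for divisors: the germ of $\omega$ along $D$ is determined up to deformation by the self-intersection sequence $S(D)$ together with the positive areas $\omega\cdot[C_i]$. Since we are proving deformation equivalence we are free to move these areas, and the space of admissible area vectors is connected, so the two germs $(N_D,\omega)$ and $(N_{D_1},\omega_1)$ can be brought into agreement. Having identified the two neighbourhoods, the remaining step is to connect the two symplectic fillings of the common contact boundary $Y_D$; here I would invoke the uniqueness of symplectic forms in a fixed cohomology class on rational and ruled surfaces (following McDuff, Taubes and Li--Liu) to deform $\omega$ into $\omega_1$ relative to the matched neighbourhood, which glues with the neighbourhood identification to give the desired deformation equivalence.

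Finally, the strict statement is the same argument performed equivariantly with respect to the labelling and cyclic order of the components $C_i$: each of the choices above — the compatible $J$, the successive blow-downs, the curve isotopies, and the neighbourhood identification — can be made to preserve the ordered configuration, so strict homological equivalence upgrades to strict symplectic deformation equivalence. The K\"ahler conclusion then follows because every minimal base case carries an obvious K\"ahler anticanonical pair and symplectic blow-up can be performed inside the K\"ahler category, so reversing the reduction exhibits a K\"ahler pair in each deformation class.
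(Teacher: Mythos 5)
Your overall skeleton --- blow both pairs down to minimal models, classify and match the minimal anticanonical pairs, then reverse the reduction --- is the same strategy the paper follows, but two load-bearing steps are missing or rest on a misapplied theorem. The first is the lift: a deformation equivalence of the minimal models only yields one for the original pairs if you show that the (strict) deformation class of a toric or non-toric blow-up depends only on the (strict) deformation class of the pair below together with the homological data, independently of the choice of centres, of the order of the operations, and (for the strict version) of the sizes. This is one of the two main steps in the paper, carried out via marked divisors whose deformation class is shown to be invariant under blow-up/down; your proposal asserts ``reversing the reduction'' without justifying it. Relatedly, you have misread what ``strict'' means: it is the condition $\Phi^*[\omega^1]=[\omega^0]$ and requires the connecting family to be an isotopy with $[\omega^t]$ constant, not equivariance with respect to the labelling (the labelling is already part of plain homological equivalence). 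In particular the freedom you invoke to ``move the areas $\omega\cdot[C_i]$'' when matching the germs along $D$ is exactly what the strict statement forbids, so your argument for the ``moreover'' clause does not get off the ground.

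The second gap is the final gluing move. Having matched neighbourhoods of the two divisors, you propose to deform $\omega$ into $\omega_1$ \emph{relative to the matched neighbourhood} by invoking the uniqueness of symplectic forms in a fixed cohomology class on rational and ruled surfaces. Those results (McDuff, Taubes, Lalonde--McDuff, Li--Liu) are statements about closed manifolds; there is no relative version that deforms a symplectic form on the complement $X-N_D$ rel boundary, and no classification of forms on that open piece is available. What actually closes the argument at the minimal level is an isotopy of the divisors themselves as configurations of $J$-holomorphic curves (Gromov--Witten stability for the components of self-intersection at least $-1$, Abreu--McDuff-type uniqueness for the single possible negative component, and a Thurston-type construction for the torus pairs over $T^2$), not a germ-plus-complement decomposition. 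A smaller inaccuracy: several minimal models are not toric surfaces with $D$ as toric boundary --- the length-$2$ cycles in $S^2\times S^2$ and $\mathbb{C}P^2\#\overline{\mathbb{C}P^2}$, the anticanonical tori, and the elliptic ruled case have no such description --- so the toric picture cannot carry the base-case matching.
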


Let us explain the various equivalence notions in the theorem (See \cite{Sa13} for a thorough discussion of equivalence notions for symplectic manifolds). 
Let $(X^0, D^0, \omega^0)$ and $(X^1, D^1, \omega^1)$ be two  symplectic pairs with $r(D^0)=r(D^1)=k$. 
They  are said to be  homologically equivalent if  
there is an orientation preserving diffeomorphism $\Phi: X^0 \to X^1$ such that $\Phi_*[C^0_j]=[C^1_j]$ for all $j=1,\dots,k$. 
The homological equivalence is said to be strict if, in addition,  $\Phi^*[\omega^1]=[\omega^0]$. 
When $X^0=X^1$,  they are said to be 
symplectic homotopic   if $(D^0, \omega^0)$ and $(D^1, \omega^1)$  are connected by a family of symplectic divisors $(D^t, \omega^t)$, and they are further said to be 
symplectic isotopic if $\omega^t$ can be chosen to be  a constant family. 
$(X^0, D^0, \omega^0)$ and $(X^1, D^1, \omega^1)$  are said to be 
 symplectic deformation equivalent if they are homotopic,  up to an orientation preserving  diffeomorphism. They are said to be 
strictly symplectic deformation equivalent   if they are symplectic isotopic,  up to an orientation preserving  diffeomorphism.  

A sequence $(s_i)$ of integers is said to be  anti-canonical  if it is realized as  $S(D)$
for a symplectic log Calabi-Yau pair $(X, D, \omega)$.  
Combined with Theorem 3.1 in \cite{Fr}, we obtain
\begin{cor} \label{cor: finite deformation} Given a anti-canonical sequence $(s_i)$, there are only finitely many symplectic deformation types of symplectic log Calabi-Yau  pairs $(X, D, \omega)$ with $S(D)=(s_i)$.
\end{cor}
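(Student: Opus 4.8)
The plan is to use Theorem~\ref{thm: symplectic deformation class=homology classes} to trade the classification of symplectic deformation types for the classification of homological equivalence classes, and then to feed in the algebraic finiteness statement of Theorem 3.1 in \cite{Fr}. By the first assertion of Theorem~\ref{thm: symplectic deformation class=homology classes}, two symplectic log Calabi-Yau pairs are symplectic deformation equivalent precisely when they are homologically equivalent, so the symplectic deformation type is determined by the homological type; moreover every symplectic deformation class contains a K\"ahler pair. Hence it suffices to bound the number of homological types that are realized by K\"ahler anti-canonical pairs with the prescribed self-intersection sequence $S(D)=(s_i)$.

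Before invoking \cite{Fr}, I would record the elementary constraints that pin down the ambient data. The number of components $r(D)$ is the length of $(s_i)$, and by the Lemma $D$ is either a torus or a cycle of spheres, the two cases being distinguished by whether the length is $1$ or at least $2$. In either case the full intersection matrix $Q_D$ is determined by the sequence: for a cycle, adjacent components meet once and all other pairs are disjoint, so $Q_D$ is the cyclic tridiagonal matrix with diagonal $(s_i)$. Writing $-K_\omega=\sum_i[C_i]$ and squaring gives, in the cycle case,
\[
K_\omega^2 \;=\; \Big(\sum_i[C_i]\Big)^2 \;=\; \sum_i s_i + 2\,r(D),
\]
and $K_\omega^2=s_1$ in the torus case, so $K_\omega^2$ is an invariant of $(s_i)$. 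Since the Lemma forces $(X,\omega)$ to be rational or elliptic ruled with $b^+=1$, this value of $K_\omega^2$ determines $b_2(X)$, and therefore confines the oriented diffeomorphism type of $X$ to a finite list.

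With the diffeomorphism type of $X$ and the matrix $Q_D$ fixed, what remains is to bound, up to the action of the orientation-preserving diffeomorphism group on $H_2(X)$, the number of configurations of classes $[C_1],\dots,[C_{r(D)}]$ with the prescribed squares and mutual pairings summing to $-K_\omega$. This is exactly where Theorem 3.1 in \cite{Fr} enters: it supplies the finiteness of such configurations, equivalently of the corresponding K\"ahler deformation types, for a fixed self-intersection sequence. Combining this with the identification of homological and symplectic deformation types, and with the fact that every symplectic deformation class is represented by a K\"ahler pair, yields the claimed finiteness.

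The main obstacle is the last step. The set of integral classes with given self-intersection $s_i$ and given pairing with $K_\omega$ is typically infinite, being cut out by quadratic equations in the lattice $H_2(X)$, so finiteness holds only after quotienting by the (infinite) group of lattice automorphisms induced by orientation-preserving diffeomorphisms. Showing that there are finitely many orbits is the genuinely hard, arithmetic and algebro-geometric input, and it is precisely this that \cite{Fr} provides; the role of Theorem~\ref{thm: symplectic deformation class=homology classes} is to make this algebraic count govern the symplectic classification.
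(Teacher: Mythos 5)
Your treatment of the Looijenga case (length at least $2$) follows essentially the same route as the paper: use Theorem \ref{thm: symplectic deformation class=homology classes} to pass from symplectic deformation types to homological types and to place a K\"ahler representative in every symplectic deformation class, then invoke the finiteness of K\"ahler deformation types from Theorem 3.1 of \cite{Fr}. The logic there is sound; one cosmetic remark is that to convert Friedman's finiteness of holomorphic deformation types into finiteness of homological configurations you are also implicitly using his Theorem 5.14 (homology equivalence of K\"ahler pairs implies holomorphic deformation equivalence), but both statements are bundled into Theorem \ref{thm: finite kahler deformation} here, so this is harmless.

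The genuine gap is the elliptic case, i.e., sequences of length $1$ where $D$ is a torus. Theorem 3.1 of \cite{Fr} is a statement about Looijenga pairs --- anticanonical cycles of rational curves on rational surfaces --- and does not cover anticanonical tori; worse, by the Lemma in the introduction the ambient manifold in the torus case may be an elliptic ruled surface, which is not a rational surface at all, so Friedman's results are simply not applicable there. Your own setup flags the torus case (you compute $K_\omega^2=s_1$ and note that $X$ may be elliptic ruled), but you then feed both cases into \cite{Fr}. The paper closes this case by a separate, more elementary argument: the minimal models with $D$ a torus form a finite list (with $s=0,8,9$), the symplectic deformation types of minimal pairs for each such $s$ are finite, and non-toric blow-up is unique up to deformation, so finiteness propagates to all elliptic pairs. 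You would need to supply an argument along these lines to make the proof complete.
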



 There is an algorithm to write down the anti-canonical sequences, starting from   the list of minimal pairs  and reverse the minimal reduction process in \cite{LiMa16-deformation}. It is interesting to compare anti-canonical sequences with spherical circular sequences. 
A  spherical circular sequence is  the sequence of  a cycle  of symplectic spheres in a rational surface with minimal   complement. 
An anti-canonical sequence $(s_i)$ is said to be  rigid   if, for any cycle of symplectic spheres $D\subset (X, \omega)$ with $S(D)=(s_i)$  and $(X-D, \omega)$  minimal, $(X, D, \omega)$  is a symplectic log Calabi-Yau pair. 



\begin{theorem} [Anti-canonical sequences, \cite{LiMa17-contact}]
\label{theorem: anti-canonical}
 Each    spherical circular sequence with $b^+=1$     is    anti-canonical, and each  anti-canonical sequence  with $b^+=1$ is  rigid. 
\end{theorem}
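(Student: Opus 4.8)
The plan is to deduce both assertions from a single statement, which I will call the Core Claim: \emph{a cycle of symplectic spheres $D=\cup C_i$ with $b^+(Q_D)=1$ and minimal complement $(X-D,\omega)$ is automatically a symplectic log Calabi--Yau pair}, i.e. $[D]=\mathrm{PD}(-K_\omega)$. Granting this, the first half follows at once: the cycle realizing a spherical circular sequence is then itself log Calabi--Yau, so its sequence $S(D)$ is anti-canonical. The second half follows because any minimal cycle realizing an anti-canonical sequence is, by the same statement, log Calabi--Yau, which is precisely rigidity. Along the way one must recover that the ambient $X$ is rational when this is not assumed: a minimal model of $(X,\omega)$ still carries a symplectic sphere, hence is rational or ruled by McDuff's theorem (cf. \cite{Liu96},\cite{OhOn96}), and the reasoning of the preceding Lemma rules out the irrational ruled case once $D$ is a cycle of spheres with $b^+(Q_D)=1$.

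The homological backbone is the adjunction computation already carried out in the preceding Lemma: for a cycle of spheres $K_\omega\cdot[C_i]=-2-[C_i]^2$ while $[D]\cdot[C_i]=[C_i]^2+2$, so $L:=[D]+K_\omega$ satisfies $L\cdot[C_i]=0$ for every $i$; summing gives $L\cdot[D]=0$. Thus $L$ lies in the orthogonal complement of $V:=\langle[C_1],\dots,[C_k]\rangle$, and since $b^+(Q_D)=1$ forces $b^+(X)=1$ with the positive direction already inside $V$, the Hodge index theorem makes the form negative definite on $V^\perp$; hence $L^2\le 0$, with equality if and only if $L=0$. The decisive structural fact is that $L$ is a blow-up invariant: under the blow-down $\pi\colon X\to X'$ of a symplectic $(-1)$-sphere lying on $D$ one has $K_\omega=\pi^*K_{\omega'}+[E]$ and $[D]=\pi^*[D']-[E]$ (in both the corner and the interior case), so the exceptional terms cancel and $L=\pi^*L'$, whence $L^2=(L')^2$.

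Accordingly I would run a \emph{minimal reduction} of the configuration. Whenever $(X,\omega)$ is not minimal there is a symplectic $(-1)$-sphere, and minimality of the complement forces every such sphere to meet $D$; I would arrange (this is the crux below) that it is either a component $C_i$ with $[C_i]^2=-1$ or meets a single component transversally once. Blowing it down keeps $D$ a cycle of symplectic spheres with minimal complement and drops $b_2(X)$ by one, so the process terminates at a cycle in a \emph{minimal} rational surface $X_{\min}$. There $V$ spans $H_2(X_{\min})$ (a cycle in $\mathbb{CP}^2$ or a Hirzebruch surface $\mathbb{F}_n$ must involve independent classes), so $V^\perp=0$ and therefore $L_{\min}=0$; equivalently one checks directly that every such minimal cycle is anti-canonical, e.g. in $\mathbb{CP}^2$ a triangle of lines or a line-plus-conic, all representing $3H=-K_{\min}$. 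Feeding this back through $L=\pi^*L_{\min}$ gives $L=0$ upstairs, proving the Core Claim and with it the theorem. To handle the base case cleanly I would invoke the deformation theorem (Theorem~\ref{thm: symplectic deformation class=homology classes}) to replace $(X_{\min},D_{\min},\omega_{\min})$ by a K\"ahler model and quote the classical classification of minimal anti-canonical pairs.

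The hard part is the reduction step itself: producing, when $X$ is non-minimal, a symplectic $(-1)$-sphere that meets $D$ \emph{simply} (as a corner component, or at one transverse interior point of one component), so that the blow-down preserves the normal-crossing cycle \emph{and} the minimality of the complement. A priori an exceptional sphere furnished by Taubes--Seiberg--Witten theory may meet several components at once, or pass through a node, or create a new $(-1)$-sphere in the complement after contraction; one must then isotope it, or first perform corner modifications, to bring the configuration into a form admitting a clean blow-down. Controlling this--choosing an almost complex structure making $D$ together with the exceptional curves holomorphic, and exploiting positivity of intersections and the connectedness of the space of exceptional spheres--is the technical heart of the argument; the base-case classification of minimal cycles is the second, more combinatorial, obstacle. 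Everything else (the adjunction identity, the Hodge-index dichotomy $L^2\le 0$, and the invariance $L=\pi^*L'$) is routine once these two inputs are in place.
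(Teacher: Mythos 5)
Your reduction has a circularity at its core, and you have correctly located it but not resolved it. The identity $[D]=\pi^*[D']-[E]$, which is what makes $L=[D]+K_\omega$ a pull-back and hence keeps $L^2$ constant, is equivalent to $E\cdot[D]=1$, i.e.\ to $L\cdot E=0$. But the only general way to see that an exceptional class meets $D$ in a single transverse point of a single component (or is itself a $(-1)$-component) is adjunction against $[D]=-K_\omega$ --- the very conclusion you are after. Without that, a Taubes exceptional sphere positioned $J$-holomorphically relative to $D$ may satisfy $E\cdot[D]\ge 2$, the contraction destroys the normal-crossing cycle, and $L$ is no longer a pull-back; positivity of intersections and connectedness of the space of exceptional spheres do not by themselves rule this out. (By contrast, the minimal-model reduction in Section~4.1 of the paper is applied only to pairs already known to be log Calabi--Yau, which is why it can use \cite{McOp13} cleanly.) Your base case has a second, independent gap: it is not true that $V=\langle[C_1],\dots,[C_k]\rangle$ spans $H_2(X_{\min};\R)$ for every minimal cycle --- e.g.\ two spheres in class $f_1+f_2$ in $S^2\times S^2$ meeting at two points give $S(D)=(2,2)$ with $V$ one-dimensional and $V^\perp=\langle f_1-f_2\rangle\ne 0$ --- so ``$V^\perp=0$ hence $L_{\min}=0$'' fails, and the fallback of ``checking directly that every minimal cycle is anti-canonical'' is precisely the classification of cycles with minimal complement in minimal rational surfaces, which is a substantial part of the work, not a footnote. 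Finally, your derivation of rationality of the ambient manifold from ``a minimal model still carries a symplectic sphere'' is too quick: McDuff's theorem needs a sphere of non-negative self-intersection, and a cycle with $b^+(Q_D)=1$ can have all $s_i<0$ (e.g.\ $(-1,-1)$).

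For comparison, the paper takes an entirely different route: it never manipulates the class $L$ under blow-downs. Instead it observes that both ``anti-canonical'' and ``rigid'' depend only on the toric equivalence class of $S(D)$, uses the combinatorial constraints of Lemmas~\ref{lem: non-negative components}, \ref{lem: |D| > 4 => D at most two consecutive nonnegative spheres} and the toric move of Example~\ref{eg: balancing self-intersection by $0$-sphere} to push every sequence with $b^+(Q_D)=1$ into the explicit list of Proposition~\ref{lem: positive parabolic}, and then disposes of each item on that list by citing the filling classifications of Ohta--Ono and Golla--Lisca (plus a blow-up trick for $(-1,-2)$ and $(-1,-3)$). Those external filling results are what break the circularity you run into: they identify the complement of $D$ directly, without presupposing $[D]=-K_\omega$. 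If you want to salvage your $L$-based strategy, you would need an independent argument forcing $L\cdot E=0$ for some exceptional class at each stage, or a direct a priori bound showing $L^2\ge 0$; as written, the two steps you flag as ``the technical heart'' are exactly where the theorem lives.
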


From the contact  point of view, symplectic log Calabi-Yau pairs are separated into $3$ groups, as stated in  the following theorem.   Here,  $Kod(Y, \xi)$ is the contact Kodaira dimension introduced in \cite{LiMa16}. 

\begin{theorem}
[Contact trichotomy, \cite{LiMa17-contact}] 
\label{prop: convex-concave}
Let $(X,D,\omega)$ be a symplectic log Calabi-Yau pair, $Q_D$  the intersection matrix of $D$ and $(s_i)$ the self intersection sequence.

(i) If $Q_D$ is negative definite, then $D$ admits {\rm convex} neighborhoods inducing the same contact  $3-$manifold $(Y_D, \xi_D)$, which  only depends on  $S(D)$ and has  $Kod\leq 0$. 

(ii) If $b^+(Q_D)= 1$,  up to local symplectic deformations, $D$ admits {\rm concave} neighborhoods inducing the same  contact  $3-$manifold $(Y_D, \xi_D)$, which only depends on
$S(D)$ and has  $Kod=-\infty$.

(iii) If $b^+(Q_D)=0$ but $Q_D$ is not negative definite, then it does not admit a regular neighborhood with  contact boundary.
\end{theorem}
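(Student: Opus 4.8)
The plan is to reduce the whole trichotomy to a single statement in linear algebra about $Q_D$ and then to read off the three cases from Perron--Frobenius theory. The geometric input I would isolate first is the following dictionary, which I regard as the heart of the matter: a regular neighborhood $N_D$ has contact-type boundary if and only if $\partial N_D$ is either convex or concave (its boundary $Y_D$ is connected because $D$ is, so a transverse Liouville field has a fixed co-orientation), and, via the symplectic neighborhood theorem for divisors together with the plumbing and disk-bundle constructions of Gay--Stipsicz and McDuff, one has the characterizations that $D$ admits a convex neighborhood exactly when $Q_D$ is negative definite (Grauert-type contractibility), while $D$ admits a concave neighborhood exactly when there is a weight vector $a\in(\R_{>0})^k$ with $Q_D a\in(\R_{>0})^k$. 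Establishing this dictionary --- in particular the forward implication that a contact-type boundary forces the stated sign condition, and the construction of the transverse Liouville field across the plumbing regions --- is the step I expect to be the main obstacle, as it is where genuine symplectic and contact geometry enters rather than bookkeeping.

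Granting the dictionary, the combinatorics is uniform. The key structural features are that $Q_D$ is symmetric with non-negative off-diagonal entries, because intersections of the $C_i$ are positive or absent, and that $Q_D$ is irreducible, because $D$ is connected. Hence $Q_D+cI$ is a non-negative irreducible matrix for $c\gg 0$, and Perron--Frobenius furnishes a strictly positive eigenvector $v>0$ whose eigenvalue $\mu$ is the largest eigenvalue of $Q_D$; since $v>0$ the vector $Q_D v=\mu v$ has the sign of $\mu$ in every entry. Pairing an arbitrary $a>0$ against $v$ gives $v^{T}(Q_D a)=\mu\,(v^{T}a)$ with $v^{T}a>0$, so the existence of a positive $a$ with $Q_D a<0$ (resp. $>0$) is equivalent to $\mu<0$ (resp. $\mu>0$). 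Because $b^+(Q_D)\in\{0,1\}$ by the opening Lemma, this means $\mu<0$ iff $Q_D$ is negative definite, $\mu>0$ iff $b^+(Q_D)=1$, and $\mu=0$ iff $b^+(Q_D)=0$ with $Q_D$ degenerate.

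The three bullets now follow. In case (i), $\mu<0$ gives a convex neighborhood; its boundary is the link of the normal surface singularity obtained by contracting the negative-definite $D$, which is Milnor fillable and hence has $Kod\le 0$ by the properties of the contact Kodaira dimension in \cite{LiMa16}. In case (ii), $\mu>0$ produces $v$ with $Q_D v>0$, and after a compactly supported deformation of $\omega$ near $D$ moving the area vector $(\langle[\omega],[C_i]\rangle)_i$ into the open cone $\{a>0:Q_D a>0\}$ one obtains a concave neighborhood; this deformation is precisely the reason the statement is phrased ``up to local symplectic deformations'', in contrast to (i), where convex Stein structures exist for all positive areas. The complement $X-N_D$ is then a minimal symplectic filling which, by the opening Lemma, is rational or ruled with $\kappa=-\infty$, forcing $Kod(Y_D,\xi_D)=-\infty$. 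In case (iii), $\mu=0$ shows that no positive $a$ yields $Q_D a$ of definite sign, so $D$ has neither a convex nor a concave neighborhood, and by the dictionary it admits no contact-type regular neighborhood at all.

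It remains to see that $(Y_D,\xi_D)$ depends only on $S(D)$. For this I would observe that in each of cases (i) and (ii) the set of admissible weight vectors is a convex, hence connected, cone, and the germ of the convex (resp. concave) structure varies continuously with the weights and with $\omega$; Gray stability then makes the induced contact structure on the fixed plumbed manifold $Y_D$ independent of these choices, so it is determined by the Euler numbers alone, that is, by $S(D)$.
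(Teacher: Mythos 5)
Your Perron--Frobenius packaging of the sign trichotomy is correct linear algebra, but it cannot carry case (ii), and that is where the real content of the theorem lies. Your deformation step asserts that a local deformation moves the area vector $a=([\omega]\cdot[C_1],\dots,[\omega]\cdot[C_k])$ into the cone of vectors of the form $Q_D z$ with $z>0$. Local deformations realized by inflation along the $C_i$ only translate $a$ inside the affine subspace $a+\mathrm{Im}(Q_D)$, so the argument presupposes $a\in\mathrm{Im}(Q_D)$, i.e.\ that $\omega|_{Y_D}$ is already exact; this is also an explicit hypothesis of Theorem \ref{concave}, which is the tool that actually produces the concave neighborhood. When $Q_D$ is nondegenerate this is free, but $b^+(Q_D)=1$ with $Q_D$ degenerate genuinely occurs for log Calabi--Yau pairs (e.g.\ $S(D)=(1,1,1)$ in $\C P^2$, or $S(D)=(0,0,0,0)$ in $S^2\times S^2$), and for a degenerate symmetric $Q_D$ the condition $a\in\mathrm{Im}(Q_D)=\ker(Q_D)^{\perp}$ is a nontrivial constraint on the actual symplectic areas that no amount of information about the sign pattern of $Q_D$ can supply. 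It holds only because the anticanonical condition pins down the classes $[C_i]$ (for $(1,1,1)$ it forces $[C_1]=[C_2]=[C_3]$, hence equal areas; for $(0,0,0,0)$ it forces $[C_1]=[C_3]$ and $[C_2]=[C_4]$). This is precisely Proposition \ref{prop: exact}, whose proof --- reduction by toric moves to the classified minimal models and verification of exactness via the $I_2$-criterion of Lemma \ref{lem: orthogonal decomposition}, which uses the closedness of $X$ --- is the bulk of the paper's argument. Your proof never inspects the area vector in case (ii) and therefore omits this step entirely.

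Two further remarks. First, your Perron--Frobenius observation does buy something genuine in case (iii): if $\mu=0$ with positive eigenvector $v$, then $v^{T}Q_Dz=0$ for every $z$ while $v^{T}a>0$, so $Q_Dz=a$ is unsolvable, $\omega|_{Y_D}$ is not exact, and the weak necessary condition for a contact-type boundary already fails; this is a clean substitute for the paper's reduction of case (iii) to the cycle of $(-2)$-spheres, and you do not need the full (and harder) necessity of the GS criteria there. Second, your computations of $Kod$ run through the wrong side of the boundary: the contact Kodaira dimension is defined via caps, not fillings, so Milnor fillability in (i) and the Kodaira dimension of the filling $X-N_D$ in (ii) are not the relevant inputs. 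The paper's mechanism is that in (i) the complement $X-N_D$ is a Calabi--Yau \emph{cap} (its $c_1$ vanishes by the log CY condition), giving $Kod\le 0$, while in (ii) the neighborhood $N_D$ itself is a uniruled cap because $c_1(N_D)\cdot[(\omega,\alpha)]=[D]\cdot[\omega]>0$, giving $Kod=-\infty$.
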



 Golla and Lisca considered  a large family $\cal F$ of torus bundles and showed that these torus bundles are equipped with contact structures arising from 
Looijenga  $D$ with  $b^+(Q_D)= 1$
(Theorem 2.5 in \cite{GoLi14}). They also showed, for  a subfamily of these torus bundles, such a  contact structure is  the unique universally tight  contact structure with vanishing Giroux torsion (Theorem 1.2 in \cite{GoLi14}). This  led them to formulate the following conjecture. 

\begin{conj} [\cite{GoLi14}]
For a concave cycle $D$ of symplectic spheres, the contact structure $\xi_D$ on $Y_D$ is universally tight. 
\end{conj}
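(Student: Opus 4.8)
The plan is to exploit the strong symplectic fillability of $(Y_D,\xi_D)$ together with the classification of tight contact structures on torus bundles. First I would record the ambient geometry: since $D$ is a concave cycle of spheres we are in case (ii) of Theorem \ref{prop: convex-concave}, so $Y_D$ is a torus bundle over $S^1$ whose monodromy $A\in SL(2,\Z)$ is the product of the matrices determined by the self-intersection sequence $S(D)=(s_i)$, and $\xi_D$ is the \emph{concave} boundary contact structure. Consequently $W:=X\setminus\mathrm{int}(N_D)$ is a compact symplectic manifold having $Y_D$ as its \emph{convex} boundary; that is, $(Y_D,\xi_D)$ is strongly symplectically filled by the log Calabi-Yau complement $W$. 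By Eliashberg--Gromov this already gives tightness, so the content of the conjecture is the passage to the universal cover.

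Second, I would pin down the torsion. Because $(Y_D,\xi_D)$ is strongly fillable, Gay's theorem that positive Giroux torsion obstructs strong fillability forces the Giroux torsion of $\xi_D$ to vanish. This is the crucial normalization: on a torus bundle the infinite families of tight structures are exactly the torsion add-ons, and among the torsion-free ones universal tightness is governed by a finite combinatorial datum.

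Third, I would produce and analyze the basic-slice decomposition. By Theorem \ref{thm: symplectic deformation class=homology classes} the pair $(X,D,\omega)$ is symplectic deformation equivalent to a K\"ahler pair, and by Theorem \ref{prop: convex-concave}(ii) the contact structure $\xi_D$ depends only on $S(D)$, so it suffices to compute on a convenient representative where the concave end carries an explicit contact form. There the fiber tori of $Y_D$ can be made convex, cutting $Y_D$ into $T^2\times I$ layers, one per component $C_i$, whose dividing slopes are read off from $s_i$ via continued fractions. Each layer is a \emph{basic slice}, and Honda's classification of tight contact structures on torus bundles identifies the universally tight members as precisely those whose consecutive basic slices carry a \emph{consistent} sign, as opposed to the sign-mixing that yields virtually overtwisted structures. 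The goal is to verify this sign consistency for $\xi_D$.

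The hard part will be exactly this sign computation, carried around the full cycle. The signs of the basic slices are the local contributions of the convex tori, and I expect them to be forced to agree by the symplectic positivity built into the definition of a symplectic divisor: every $C_i$ is a positively oriented symplectic surface and every intersection $[C_i]\cdot[C_{i+1}]$ is positive, so the induced co-orientations should propagate without a flip as one traverses the necklace. Making this precise means matching the outward Liouville co-orientation of $W$ along $Y_D$ with Honda's sign convention on each layer and checking that the monodromy identification closing up the cycle preserves it; the subtlety is genuinely global, since a priori the cycle could close up with a relative sign, which is precisely the virtually-overtwisted scenario, so the argument must use the concavity (equivalently $b^+(Q_D)=1$) and not merely the local model. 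As a cross-check I would invoke residual finiteness of the virtually polycyclic torus-bundle group $\pi_1(Y_D)$: any overtwisted disk in the universal cover $\R^3$ is compact, so if the universal cover were overtwisted some finite cover would be too, whence universal tightness is equivalent to tightness of all finite covers. One then tries to realize a cofinal family of these covers as boundaries $(Y_{D'},\xi_{D'})$ of related symplectic log Calabi-Yau pairs (cyclically repeating the necklace, together with its monodromy-invariant lattice refinements), each again strongly fillable and hence tight. The obstacle there is dual to the one above, namely realizing the fiber-direction covers, those unwrapping the torus rather than the base circle, as genuine log Calabi-Yau fillings.
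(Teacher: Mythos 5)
First, a framing point: the statement you are proving is stated in the paper (and in Golla--Lisca) as a \emph{conjecture}; the paper offers no proof, and Golla--Lisca verify it only for a subfamily of torus bundles by identifying $\xi_D$ with the unique universally tight, zero-torsion structure in the Honda--Giroux classification. So your proposal cannot be measured against an existing argument; it has to stand on its own, and as written it does not. The sound parts are the preliminaries: for a cycle arising from a log Calabi--Yau pair, the complement $X-N_D$ is a strong filling of the concave boundary $(Y_D,\xi_D)$ (Theorem \ref{prop: convex-concave}(ii)), so $\xi_D$ is tight with vanishing Giroux torsion, and by Theorem \ref{prop: convex-concave}(ii) it suffices to compute on any representative with the given $S(D)$. (Even here there is a scope issue: the conjecture is stated for an arbitrary concave cycle of symplectic spheres, which need not embed in a closed log CY pair, so fillability is not free; Theorem \ref{theorem: anti-canonical} covers the $b^+(Q_D)=1$ case but not, a priori, every concave cycle.)

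The genuine gap is exactly where you locate it, and neither of your two routes closes it. (1) The sign-coherence claim. Symplectic positivity of the components and of their intersections is a \emph{local} condition already satisfied by the construction producing every $\xi_D$, including any that might turn out virtually overtwisted; if positivity alone forced coherent basic-slice signs, the conjecture would be immediate, so "the co-orientations should propagate without a flip" is a restatement of the conjecture, not an argument. What is actually needed is a computation of the dividing-set data (equivalently the relative Euler class) of the contact structure produced by Theorem \ref{concave} on each continued-fraction block --- note each component $C_i$ contributes a stack of basic slices, not one --- followed by a match against Honda's list; this is precisely the step Golla--Lisca carry out only in special cases. (2) The covering-space route. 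The residual-finiteness reduction to finite covers is correct, but the covers are not realized: the $n$-fold base-direction cover of $Y_D$ is the torus bundle of the $n$-fold repeated sequence, which generically fails $b^+(Q_{D^{(n)}})\le 1$ (e.g.\ $(2,2)$ repeats to $(2,2,2,2)$ with $b^+=3$), so it is not the boundary of a log CY pair, and even when a cover is abstractly fillable you would still need the \emph{pulled-back} contact structure to agree with the one your filling bounds. The fiber-direction covers, as you concede, are not addressed at all. So the proposal is a plausible strategy outline with the decisive step missing, not a proof.
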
 


 Moreover,  they investigated Stein (and symplectic) fillings  and  classified in many cases up to diffeomorphism (Theorems  3.1, 3.2, 3.5 in \cite{GoLi14}).
On the other hand, Ohta and Ono classified symplectic fillings of simple elliptic singularities up to symplectic deformation (Theorems 1, 1', 2 in \cite{OhOn03}). 
Using  these results and Corollary \ref{cor: finite deformation},  we establish the following finiteness result.

\begin{cor}[Symplectic fillings, \cite{LiMa17-contact}] \label{cor: finite stein} Suppose $(X, D, \omega)$ is a symplectic log Calabi-Yau  pair with  $b^+(Q_D)=1$.  Then  

$\bullet$ There are finitely many (at least $1$)  Stein fillings of $(Y_D, \xi_D)$ up to symplectic deformation, all having $b^+=0$. Moreover, for a Looijenga pair, all Stein fillings 
have  $c_1=0$.

$\bullet$ This is also true for minimal symplectic fillings.
\end{cor}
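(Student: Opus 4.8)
The plan is to trade the statement about fillings for the finiteness of log Calabi-Yau deformation types (Corollary \ref{cor: finite deformation}) by capping fillings with the concave neighborhood supplied by the contact trichotomy. The two geometric types of $D$ are handled separately. If $D$ is a single torus then $(Y_D,\xi_D)$ is the link of a simple elliptic singularity, and both the finiteness and the deformation classification of the fillings are exactly the Ohta--Ono results, so nothing further is required. The substance is the Looijenga case, in which $D$ is a cycle of symplectic spheres, and I would proceed as follows.

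Let $W$ be a minimal symplectic filling of $(Y_D,\xi_D)$. I would glue on the concave neighborhood $N_D$ provided by Theorem \ref{prop: convex-concave}(ii) along the common contact boundary to form a closed symplectic $4$-manifold $(\hat X,\hat\omega)=W\cup_{Y_D} N_D$ that contains $D$ as a cycle of symplectic spheres with self-intersection sequence $S(D)$. Since $W$ is minimal, the complement $\hat X-D=W$ is minimal, and $S(D)$ is an anti-canonical sequence with $b^+=1$ inherited from the original pair $(X,D,\omega)$. Hence the rigidity half of Theorem \ref{theorem: anti-canonical} applies directly and shows that $(\hat X,D,\hat\omega)$ is itself a symplectic log Calabi-Yau pair; in particular $\hat X$ is rational and $K_{\hat\omega}=-[D]$. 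Because $[D]$ is supported in $N_D$, its restriction to $W$ vanishes, so $c_1(W)=0$; and because $b^+(\hat X)=1$ while the whole positive part of the intersection form is concentrated in $N_D$ (as $b^+(Q_D)=1$), we obtain $b^+(W)=0$.

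For finiteness I would promote the capping to a map on symplectic deformation classes, from minimal fillings of $(Y_D,\xi_D)$ to log Calabi-Yau pairs with self-intersection sequence $S(D)$. This map is injective: by the uniqueness clause of Theorem \ref{prop: convex-concave} the neighborhood $N_D$ sits inside $(\hat X,D)$ canonically up to local deformation, so $W=\hat X-\mathrm{int}\,N_D$ is recovered from the capped pair up to symplectic deformation. The target being finite by Corollary \ref{cor: finite deformation}, there are only finitely many minimal symplectic fillings, which is the second bullet; at least one exists since inside the original pair the complement $X-\mathrm{int}\,N_D$ is already a convex symplectic filling of $(Y_D,\xi_D)$, which can be deformed to a Stein domain.

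It remains to pass from minimal symplectic fillings to Stein fillings and to secure $c_1=0$ in the Stein case, and this is where I expect the real work. The clean mechanism is that, once $c_1(W)=0$ is known, no symplectic $(-1)$-sphere $E\subset W$ can exist, since adjunction would force $K_W\cdot[E]=-1\neq 0$ against $K_W=0$; hence $W$ is automatically minimal and falls under the case already treated. The obstacle is therefore to establish $c_1(W)=0$ for an a priori non-minimal Stein filling, where the capped manifold need not be log Calabi-Yau and the rigidity theorem does not directly apply. I would attack this by exploiting that $N_D$ is a symplectic cap lying inside a rational, hence uniruled, surface: such a cap bounds the topology of every filling it can be glued to, forcing $b_2(W)$ and with it the number of admissible blow-ups to be bounded, so that each Stein filling is a blow-up of one of the finitely many minimal fillings, while the vanishing $c_1(\xi_D)=0$ (which holds because $X-D$ has $c_1=0$) propagates to $c_1(W)=0$. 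Making this bound effective, and thereby ruling out non-minimal Stein fillings, is the crux of the argument.
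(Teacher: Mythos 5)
Your central mechanism --- cap a filling $W$ with the concave neighborhood $N_D$ from Theorem \ref{prop: convex-concave}(ii), apply the rigidity half of Theorem \ref{theorem: anti-canonical} to recognize $(W\cup N_D, D)$ as a symplectic log Calabi--Yau pair with sequence $S(D)$, and then quote Corollary \ref{cor: finite deformation} --- is exactly the paper's argument, as is the separate treatment of the elliptic case via Ohta--Ono. But there are two genuine gaps. The first is the step you yourself flag as ``the crux'': ruling out non-minimal Stein fillings. The structure you set up is circular (you need $c_1(W)=0$ to prove minimality, and minimality to run the rigidity argument that gives $c_1(W)=0$), and the escape route via topological bounds from the uniruled cap is never carried out. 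In fact there is nothing to do here: a Stein filling is an exact symplectic manifold, so by Stokes' theorem it contains no closed symplectic surface at all --- in particular no exceptional sphere --- and is therefore automatically a minimal symplectic filling. The Stein bullet is a special case of the minimal bullet, not an extra difficulty.

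The second gap is the existence of a Stein filling (the ``at least $1$'' clause). You assert that the convex complement $X-\mathrm{int}\,N_D$ ``can be deformed to a Stein domain,'' but a strong (convex) filling is not in general deformable to a Stein filling; this is precisely the distinction between strong and Stein fillability, and no argument is offered. The paper instead produces a Stein filling by Remark \ref{thm: as a support of ample line bundle}: using Theorem \ref{thm: symplectic deformation class=homology classes} and Gross--Hacking--Keel, one finds a K\"ahler pair $(\overline X,\overline D)$ in the deformation class with $\overline D$ the support of an ample line bundle, and then $\overline X-\overline D$ is Stein with $b^+=0$ and $c_1=0$. With these two repairs --- Stein implies exact implies minimal, and ampleness for existence --- your argument coincides with the paper's.
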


 We end the survey
 discussing the geography of Stein fillings for  negative definite $Q_D$.

The first author is grateful for the opportunity to speak at the 
`Perspectives of Mathematics in the 21st Century: Conference in Celebration of the 90th Anniversary of Mathematics Department of Tsinghua University'.
The authors are also grateful to Kaoru Ono for his interest and useful discussions.   
The  authors were supported by NSF grants DMS 1065927 and 1207037, and are supported by NSF grant 1611680. 

\section{Topology of cycle of spheres in a rational surface}
In this section we  review some homological facts about topological divisors, especially cycles of spheres,  and  we refer to  \cite{Ne81}, \cite{GoLi14} and \cite{LiMa17-contact}  for details. 
We first  introduce a pair of basic operations for topological divisors.

\begin{definition} 
Toric blow-up is the operation adding a sphere component  with self-intersection $-1$   between an adjacent pair of components  $C_i$ and $C_{i+1}$ and reducing the self-intersection of $C_i$ and $C_{i+1}$ by $-1$. 
Toric blow-down is the reverse operation. 

Notice that there is a  natural labeling  for these operations. 

Two pairs $(X, D^0)$ and $(X, D^1)$ are said to be toric equivalent if they are connected by toric blow-ups and toric blow-downs. 
$D$ is said to be toric minimal if no component is an exceptional sphere. Here, an exceptional sphere is a  sphere with self-intersection $-1$. 
\end{definition}

They can be performed in the holomorphic and symplectic categories. In the holomorphic category they are often referred as corner blow-up/down. 

\begin{lemma} The  following are preserved under a toric blow-up/down:

$\bullet$ $D$ being a cycle of spheres,

$\bullet$ the non-degeneracy of the intersection matrix $Q_D$,

$\bullet$ the oriented diffeomorphism type of the plumbed 3-manifold $Y_D$.  

\end{lemma}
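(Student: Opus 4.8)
The plan is to reduce everything to toric blow-up, since toric blow-down is by definition its inverse, and to realize the toric blow-up geometrically as an ordinary blow-up of the plumbed $4$-manifold $N_D$ at the intersection point $p = C_i \cap C_{i+1}$. Blowing up $p$ in the interior of $N_D$ produces $\widetilde{N} = N_D \,\#\, \overline{\C\PP^2}$, with exceptional sphere $E$ of square $-1$; in $H_2(\widetilde N) = H_2(N_D) \oplus \Z[E]$ the proper transforms satisfy $[\tilde C_i] = [C_i] - [E]$ and $[\tilde C_{i+1}] = [C_{i+1}] - [E]$, while $[\tilde C_j] = [C_j]$ for the remaining components. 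These are exactly the homological bookkeeping identities behind the definition of toric blow-up, and they will drive all three bullets.

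For the first bullet I would simply read the intersection pattern off these relations: $[\tilde C_i]\cdot [E] = [\tilde C_{i+1}]\cdot[E] = 1$, $[\tilde C_i]\cdot[\tilde C_{i+1}] = 0$, and all other intersections are unchanged. Since $E$ is a sphere and the $\tilde C_j$ remain spheres, this shows that the new divisor $C_1 \cup \dots \cup C_i \cup E \cup C_{i+1} \cup \dots \cup C_k$ is again a cycle of spheres, now of length $k+1$, with $E$ inserted between $C_i$ and $C_{i+1}$. For the second bullet I would compare determinants lattice-theoretically. Let $L' \subset H_2(\widetilde N)$ be the sublattice spanned by the new cycle $\{[\tilde C_1], \dots, [\tilde C_k], [E]\}$; its Gram matrix is $Q_{D'}$ up to a simultaneous permutation of rows and columns. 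From $[\tilde C_i] + [E] = [C_i]$ and the analogue for $i+1$, the same lattice $L'$ has the alternative $\Z$-basis $\{[C_1], \dots, [C_k], [E]\}$, related to the cycle basis by a unimodular integer matrix. Because $[E]$ is orthogonal to every $[C_j]$ and has square $-1$, the Gram matrix in this second basis is $Q_D \oplus (-1)$, so $\det Q_{D'} = -\det Q_D$ and nondegeneracy is preserved. (One can equally expand $\det Q_{D'}$ along the $E$-row, but the lattice argument makes the sign transparent.)

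The third bullet is where the real care is needed; the claim is that $Y_{D'} \cong Y_D$ as oriented $3$-manifolds. Since $N_D$ deformation retracts onto $D$, the blow-up $\widetilde N$ deformation retracts onto the total transform of $D$, which is precisely $D'$, so $\widetilde N$ is a regular neighborhood of the configuration $D'$; by the uniqueness of plumbed neighborhoods quoted in the Introduction, $N_{D'} \cong \widetilde N = N_D \,\#\, \overline{\C\PP^2}$. Taking boundaries, and using that an interior connected sum with a closed manifold leaves the boundary and its induced orientation untouched, gives $Y_{D'} = \partial N_{D'} \cong \partial(N_D \,\#\, \overline{\C\PP^2}) = \partial N_D = Y_D$ orientation-preservingly.

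I expect the main obstacle to be exactly this identification $N_{D'} \cong \widetilde N$: one must check that blowing up the intersection point $p$ genuinely reproduces the plumbing of $D'$, and that every orientation convention matches. Alternatively, one may bypass the neighborhood discussion altogether by invoking Neumann's plumbing calculus \cite{Ne81}, in which the insertion of a $(-1)$-vertex on an edge with the two endpoint weights each lowered by one is exactly one of the moves preserving the oriented boundary $3$-manifold.
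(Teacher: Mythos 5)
Your proposal is correct, but it is more self-contained than what the paper actually does: the paper dismisses the first bullet as obvious, calls the second ``a direct computation'' (i.e.\ expanding the determinant of the new intersection matrix), and for the third simply cites Proposition~2.1 of Neumann's plumbing calculus. You instead derive all three bullets from a single geometric realization of the toric blow-up as an interior blow-up of $N_D$ at $p=C_i\cap C_{i+1}$, which buys you a uniform mechanism: the homological identities $[\tilde C_i]=[C_i]-[E]$ give the new cycle structure, the unimodular change of basis gives the clean relation $\det Q_{D'}=-\det Q_D$ (sharper than mere preservation of nondegeneracy, and it avoids the determinant expansion the paper has in mind), and the fact that an interior connected sum leaves the boundary untouched gives the third bullet without invoking Neumann at all. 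Two small points to tidy up. First, your claim $[\tilde C_i]\cdot[\tilde C_{i+1}]=0$ assumes $[C_i]\cdot[C_{i+1}]=1$, which holds for $r(D)\geq 3$ but fails for a length-$2$ cycle, where adjacent components meet at two points; the correct general statement is that the local intersection at $p$ is removed, so the intersection number drops by exactly $1$, and the resulting configuration is still a cycle of length $k+1$ in every case. Second, the identification $N_{D'}\cong\widetilde N$ does require the standard fact that a regular neighborhood of such a transversally intersecting configuration is determined up to orientation-preserving diffeomorphism by its plumbing graph --- you flag this yourself, and it is exactly the uniqueness the paper builds into its definition of $N_D$, so it is legitimate to use.
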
 

The 1st bullet is obvious, while the 2nd bullet is by a direct computation. The 3rd bullet is part of Proposition 2.1 in \cite{Ne81}. 

Here is an example to illustrate how a   sphere with $s=0$ can be  used  to `balance' the self-intersection of the two sides by performing a toric blow-up and a toric blow-down. 

\begin{example}[Toric move]\label{eg: balancing self-intersection by $0$-sphere}
The following  three  cycles of spheres are toric equivalent: 
\begin{displaymath}
    \xymatrix @R=1pc @C=1pc {
     \bullet ^{3} \ar@{-}[r] \ar@{-}[dr]& \bullet ^{-2} \ar@{-}[d]        & \bullet ^{2} \ar@{-}[r] \ar@{-}[d]& \bullet ^{-2} \ar@{-}[d] &  \bullet ^{2} \ar@{-}[r] \ar@{-}[d]& \bullet ^{-1} \ar@{-}[dl] \\
	& \bullet ^{0}      &     \bullet ^{-1}  \ar@{-}[r]   & \bullet ^{-1}  &     \bullet ^{0}     \\
}
\end{displaymath}

\end{example}

From now on  $D$ is either a smooth torus or a cycle of smooth spheres. 
When $D$ is a torus with self-intersection $s$, the boundary 3-manifold is the circle bundle with Euler number $s$.



\subsection{The sequence $S(D)$ and  the boundary torus bundle}
When $D$ is a cycle of spheres  the labeling is taken to be cyclic.  The orientation of $D$ is a cyclic labeling up to permutation.  
We will assume now that  $D$ is a cycle of spheres with the self-intersection sequence $S(D)=(s_i)$. 
Let  $s(D)=\sum_{i=1}^{r(D)} (s_i +2)$ denote the self-intersection number of $D$.

\begin{lemma} [cf. Theorem 2.5 and Theorem 3.1 in \cite{GoLi14}]  \label{lem: homology of neighborhood} Let  $D$ be a cycle of spheres in $X$ and $V=X-N_D$. 

$\bullet$   $H_2(N_D)=\Z^{r(D)}=H^2(N_D), H_1(N_D)=H^1(N_D)=\Z,  H_3(N_D)=H^3(N_D)=0$.
  
$\bullet$  $H_1(Y_D)\to H_1(N_D)$ is a surjection.    If  $Q_D$ is non-degenerate, then $b_1(Y_D)=1$ and the map $H_1(Y_D)\to H_1(N_D)$ has a finite kernel, $H_2(Y_D)=H^1(Y_D)=\Z$ and  the map $H_2(Y_D)\to H_2(N_D)$ is trivial.

$\bullet$ Suppose  $Q_D$ is non-degenerate and  $b_1(X)=0$, then $b_1(V)=b_3(V)=0$, $b_2(V)=b_2(X)-r(D)-1$ and the map $\Z=H_2(Y_D)\to H_2(V)$ is injective.

\end{lemma}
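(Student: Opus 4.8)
The plan is to compute $H_*(V)$ from the Mayer--Vietoris sequence of the decomposition $X=N_D\cup V$, in which $N_D\cap V$ is a collar of the common boundary, so that $N_D\cap V\simeq Y_D$. The inputs are the homology of $N_D$ and of $Y_D$ recorded in the first two bullets, together with the homology of the closed oriented $4$-manifold $X$. Since $b_1(X)=0$, Poincar\'e duality and universal coefficients give $H_3(X)\cong H^1(X)\cong\operatorname{Hom}(H_1(X),\Z)=0$ \emph{integrally} (as $H_1(X)$ is then pure torsion), and in particular $b_3(X)=0$. I would feed these into
\[
\cdots\to H_n(Y_D)\to H_n(N_D)\oplus H_n(V)\to H_n(X)\to H_{n-1}(Y_D)\to\cdots.
\]

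First I would extract the two vanishing statements. At the top, $H_4(N_D)\oplus H_4(V)=0$ forces $H_4(X)\to H_3(Y_D)$ to be injective, hence a rational isomorphism $\Z\to\Z$; exactness then makes $H_3(Y_D)\to H_3(N_D)\oplus H_3(V)$ rationally zero, and since $H_3(N_D)=0=H_3(X)$ this pins $H_3(V)$ to rank $0$, i.e. $b_3(V)=0$. At the bottom, the second bullet gives that $H_1(Y_D)\to H_1(N_D)$ is onto with finite kernel, so it is a rational isomorphism $\Q\to\Q$; combined with $H_1(X)=0$, exactness makes $H_1(Y_D)\to H_1(N_D)\oplus H_1(V)$ rationally surjective, and since its source has rank $1$ while the composite to $H_1(N_D)$ is already a rational isomorphism, the summand $H_1(V)$ must have rank $0$, i.e. $b_1(V)=0$.

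With $b_1(V)=b_3(V)=0$ in hand, $b_2(V)$ becomes a rank count in the middle of the sequence, or equivalently the Euler characteristic identity $\chi(X)=\chi(N_D)+\chi(V)$ coming from $\chi(Y_D)=0$. Here $\chi(N_D)=\chi(D)=2r(D)-r(D)=r(D)$, since a cycle of $r(D)$ spheres is built from $r(D)$ spheres glued at $r(D)$ nodes, and $\chi(X)=2+b_2(X)$; solving for $\chi(V)=1+b_2(V)$ then determines $b_2(V)$ in terms of $b_2(X)$ and $r(D)$.

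The step I expect to carry the real content is the \emph{integral} injectivity of $H_2(Y_D)\to H_2(V)$. In the Mayer--Vietoris sequence the kernel of the combined map $H_2(Y_D)\to H_2(N_D)\oplus H_2(V)$ is the image of $H_3(X)$; the clean point is that $H_3(X)=0$ over $\Z$, so this map is injective integrally, with no torsion ambiguity. The second bullet then says that the component $H_2(Y_D)\to H_2(N_D)$ is the trivial map, so all of this injectivity is carried by the remaining component $H_2(Y_D)\to H_2(V)$, which is therefore injective. The one thing to be careful about is precisely this upgrade from $\Q$ to $\Z$: it rests on the integral vanishing $H_3(X)=0$, itself a consequence of $b_1(X)=0$ via Poincar\'e duality and universal coefficients, rather than on any rational rank count.
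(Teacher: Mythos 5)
Your proposal only engages with the third bullet of the lemma: you explicitly take the homology of $N_D$ and of $Y_D$, together with the behaviour of the maps $H_1(Y_D)\to H_1(N_D)$ and $H_2(Y_D)\to H_2(N_D)$, as ``inputs recorded in the first two bullets.'' But those two bullets are part of the statement to be proved, and the second one is where the hypothesis that $Q_D$ is non-degenerate actually does its work. The paper derives it from the long exact sequence of the pair $(N_D,Y_D)$ together with Lefschetz duality $H_i(N_D,Y_D)\cong H^{4-i}(N_D)$ and the identification of $Q_D$ with the restriction map $H_2(N_D)\to H_2(N_D,Y_D)$: non-degeneracy of $Q_D$ is exactly what makes $H_2(N_D)\to H_2(N_D,Y_D)$ rationally an isomorphism, which is what forces $b_1(Y_D)=1$, $H_2(Y_D)=\Z$, the finiteness of the kernel of $H_1(Y_D)\to H_1(N_D)$, and the triviality of $H_2(Y_D)\to H_2(N_D)$. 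None of this is supplied, so as written the lemma is not proved. For the part you do address, your route is the same as the paper's (Mayer--Vietoris for $X=N_D\cup V$), and the individual steps are sound; your observation that $H_3(X)=0$ holds integrally when $b_1(X)=0$ is correct, although for a map out of $H_2(Y_D)\cong\Z$ rational injectivity already implies integral injectivity, so the ``upgrade'' you worry about is not really the delicate point.

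There is also a problem with the one numerical claim you were supposed to establish: you stop at ``solving for $\chi(V)=1+b_2(V)$ then determines $b_2(V)$'' without writing the answer. Carrying it out, $\chi(X)=\chi(N_D)+\chi(V)$ with $\chi(X)=2+b_2(X)$ and $\chi(N_D)=r(D)$ gives $b_2(V)=b_2(X)-r(D)+1$, and the same ``$+1$'' comes out of the rational Mayer--Vietoris rank count
$0\to H_2(Y_D;\Q)\to H_2(N_D;\Q)\oplus H_2(V;\Q)\to H_2(X;\Q)\to 0$
(the image of $H_2(X;\Q)$ in $H_1(Y_D;\Q)$ vanishes because $H_1(Y_D)\to H_1(N_D)$ has finite kernel). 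This does not agree with the ``$b_2(X)-r(D)-1$'' in the statement, and indeed that version can be negative in examples, so the stated sign appears to be a typo; but by declining to commit to a formula your proposal neither verifies the claimed identity nor flags the discrepancy. You should state the formula you actually obtain and note explicitly that it differs from the one asserted.
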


Here are obvious restrictions on homologous components of $D$ from the cycle condition. 

\begin{lemma}  \label{lem:   homologous components}  For a cycle of spheres $D$, 

$\bullet$  At most three components   are homologous in $X$. There are three homologous components only if $r(D)=3$.

$\bullet$ There are a pair of homologous components only if  $r(D)\leq 4$.  

$\bullet$ If $[C_i]=[C_{i+1}]$ for some $i$  then $r(D)=3, 
s_i=s_{i+1}=1$, or $r(D)=2, s_i=s_{i+1}=2$. 
\end{lemma}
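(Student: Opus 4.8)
The key input is the intersection pattern of a cycle of spheres. For two distinct components the homological pairing $[C_a]\cdot[C_b]$ equals their geometric intersection number, a nonnegative count of transverse positive points; in a cycle of length $r(D)\geq 3$ this number is $1$ when $C_a$ and $C_b$ are cyclically adjacent and $0$ otherwise, while $[C_a]^2=s_a$. (When $r(D)=2$ the two components meet twice, so the single off-diagonal entry of $Q_D$ is $2$.) The one algebraic fact I will use repeatedly is that $[C_i]=[C_j]$ forces $[C_i]\cdot\beta=[C_j]\cdot\beta$ for every class $\beta$. The plan is to pair each homology relation against the components themselves and against their cyclic neighbors, thereby converting every homological coincidence into a purely combinatorial statement about the cycle graph $C_{r(D)}$.

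For the third bullet, suppose $[C_i]=[C_{i+1}]$ with $C_i,C_{i+1}$ adjacent. Pairing with $[C_i]$ gives $s_i=[C_i]^2=[C_i]\cdot[C_{i+1}]$, which is $1$ when $r(D)\geq 3$ and $2$ when $r(D)=2$, and symmetrically for $s_{i+1}$. To pin down $r(D)$ in the case $r(D)\geq 3$, I would pair the relation against the other neighbor $C_{i-1}$ of $C_i$: since $[C_{i-1}]\cdot[C_i]=1$ we obtain $[C_{i-1}]\cdot[C_{i+1}]=1\neq 0$, so $C_{i-1}$ and $C_{i+1}$ must be adjacent. In the cycle this reads $2\equiv\pm 1\pmod{r(D)}$, forcing $r(D)=3$. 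This produces exactly the two listed alternatives.

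The first two bullets follow from the same mechanism applied to a maximal set $S$ of mutually homologous components, all of some class $\alpha$. Any two members $C_a,C_b$ of $S$ satisfy $[C_a]\cdot[C_b]=\alpha^2=s$, a value independent of the chosen pair; hence either every pair in $S$ is adjacent (when $s=1$), so $S$ is a \emph{clique} in $C_{r(D)}$, or every pair is non-adjacent (when $s=0$), so $S$ is an \emph{independent set} (the value $s=2$ arises only in the degenerate case $r(D)=2$, where $|S|\le 2$ trivially). In the clique case, $C_{r(D)}$ is triangle-free for $r(D)\ge 4$ and is a triangle precisely when $r(D)=3$, so $|S|\le 3$, with equality only at $r(D)=3$. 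In the independent case I would pair $\alpha$ against the two neighbors $C_{i\pm 1}$ of a member $C_i$: since $[C_{i\pm1}]\cdot[C_i]=1$, any matching member $C_j$ must be adjacent to both $C_{i-1}$ and $C_{i+1}$, forcing $j\equiv i-2$ and $j\equiv i+2\pmod{r(D)}$ and hence $r(D)=4$; then the independent sets of $C_4$ have size at most $2$. Combining the two cases yields at most three homologous components, three only when $r(D)=3$, and a homologous pair only when $r(D)\le 4$.

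The only point requiring care, and the main (mild) obstacle, is the passage between homology and geometry: I must justify that for distinct components a vanishing homological intersection genuinely means the surfaces are disjoint, so that ``$[C_a]\cdot[C_b]=0$'' may be read as ``non-adjacent.'' This is precisely the positivity and transversality built into the definition of a symplectic (topological) divisor. Once that is in hand, the argument is bookkeeping on the cycle graph, with the exceptional outputs ($s_i=2$ and three homologous components) emerging only from the small degeneracies $r(D)=2$ and $r(D)=3$.
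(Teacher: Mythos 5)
Your proof is correct. The paper offers no proof of this lemma, dismissing it as an ``obvious restriction from the cycle condition,'' and your argument --- pairing each homology relation $[C_i]=[C_j]$ against the components and their cyclic neighbors, then reading off adjacency from positivity of intersections --- is exactly the natural elaboration of that remark, with all the small cases ($r(D)=2$, the clique versus independent-set dichotomy) handled correctly.
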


 When $b^+(X)=1$ there are various restrictions on components with non-negative self-intersection.
 Let $r^{\geq 0}(D)$ denote the number of components with self-intersection $\geq 0$.  

\begin{lemma} \label{lem: non-negative components}
Suppose $D$ is  a cycle of spheres in $X$ with  $b^+(X)=1$. 

$\bullet$   If $C_i$ and $C_j$ are not adjacent and $s_i\geq 0, s_j\geq 0$, then $[C_i]=[C_j]$ and $s_i=s_j=0$. 

$\bullet$ $r^{\geq 0}(D)\leq 4$. 

$\bullet$  $r^{\geq 0}(D)=4$ only if $r(D)=4, s_i=0$ for each $i$ and $[C_1]=[C_3], [C_2]=[C_4]$. 

$\bullet$  Suppose $r(D)\geq 3$. If  $s_i\geq0, s_{i+1}\geq 0, s_is_{i+1}\geq 1$ for some $i$, then $[C_i]=[C_{i+1}]$ and $s_i=s_{i+1}=1$. This is only possible
when $r(D)=3$. 

\end{lemma}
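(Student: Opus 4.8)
The plan is to exploit the hypothesis $b^+(X)=1$ through the \emph{light cone lemma}: on $H_2(X;\R)$ the intersection form has signature $(1,b_2-1)$, so $\{x:x\cdot x>0\}$ has two components, and $[\omega]$ (which satisfies $[\omega]\cdot[\omega]>0$) singles out one of them; call its closure $\overline{\mathcal C}$. Since each $C_i$ is symplectic, $[C_i]\cdot[\omega]>0$, so whenever $s_i=[C_i]\cdot[C_i]\geq 0$ the class $[C_i]$ lies in $\overline{\mathcal C}$. The two standard facts I will use are: (a) for $x,y\in\overline{\mathcal C}$ one has $x\cdot y\geq 0$, strictly positive if one is timelike and the other nonzero; and (b) the reverse Cauchy--Schwarz inequality $(x\cdot y)^2\geq (x\cdot x)(y\cdot y)$, with equality iff $x,y$ are proportional. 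Throughout I will use that for a cycle of spheres $[C_i]\cdot[C_j]$ equals $1$ if $C_i,C_j$ are adjacent and $0$ otherwise, together with the adjunction equality $K_\omega\cdot[C_i]=-2-s_i$.

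First I would dispatch the two rigidity bullets. For the first bullet, non-adjacent $C_i,C_j$ with $s_i,s_j\geq 0$ give $[C_i]\cdot[C_j]=0$; by (a) neither class can be timelike, so $s_i=s_j=0$, and then the two null future classes with vanishing product are proportional by (b). The adjunction values $K_\omega\cdot[C_i]=K_\omega\cdot[C_j]=-2$ force the proportionality constant to be $1$, whence $[C_i]=[C_j]$. The fourth bullet runs on the same mechanism for an adjacent pair: if $s_i,s_{i+1}\geq 0$ and $s_is_{i+1}\geq 1$ then both classes are timelike, so $1=([C_i]\cdot[C_{i+1}])^2\geq s_is_{i+1}\geq 1$ forces $s_i=s_{i+1}=1$ and proportionality, and adjunction again upgrades this to $[C_i]=[C_{i+1}]$; Lemma \ref{lem:   homologous components} then pins down $r(D)=3$.

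For the counting bullets I would split on whether some component of $I:=\{j:s_j\geq 0\}$ is timelike. If $s_i\geq 1$ for some $i\in I$, then by (a) every other class in $I$ pairs strictly positively with $[C_i]$, hence has intersection exactly $1$ with it, hence is adjacent to $C_i$; since $C_i$ has only two neighbours in the cycle, $r^{\geq 0}(D)\leq 3$. If instead $s_i=0$ for all $i\in I$, the first bullet says that for $i,j\in I$ the classes $[C_i]$ and $[C_j]$ coincide exactly when $C_i,C_j$ are non-adjacent. Grouping $I$ by homology class therefore produces fibres that are independent sets of the cycle, while representatives of distinct classes are pairwise adjacent, i.e.\ form a clique. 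A clique in a cycle has at most two vertices once $r(D)\geq 4$, and by Lemma \ref{lem:   homologous components} a fibre of size $\geq 2$ already forces $r(D)\leq 4$; combining these, the equality $r^{\geq 0}(D)=4$ can occur only when $r(D)=4$ with two homology classes, each carried by an opposite pair, giving all $s_i=0$, $[C_1]=[C_3]$ and $[C_2]=[C_4]$, while every other configuration has $r^{\geq 0}(D)\leq 3$. This is bullets two and three.

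The step I expect to be most delicate is the passage from \emph{proportional} to \emph{equal} homology classes: reverse Cauchy--Schwarz only yields proportionality over $\R$, and it is the equality form of the adjunction formula---available precisely because the $C_i$ are symplectic spheres---together with torsion-freeness of $H_2$ of the (rational) surface that removes the scaling. I would also verify the null--null equality case carefully, namely that two future-directed light rays have vanishing product only when proportional, and check that the clique/independent-set bookkeeping is sharp in each of the regimes $r(D)=3$, $r(D)=4$ and $r(D)\geq 5$ before concluding.
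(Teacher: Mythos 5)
Your proof is correct and follows essentially the same route as the paper: the light cone lemma for $b^+=1$ gives the first bullet, the second and third bullets are counting consequences of the first, and the last bullet comes down to the fact that a pair of timelike classes meeting once would span a positive-definite $2$-plane. The only place you diverge is the borderline case $s_i=s_{i+1}=1$ of the last bullet, which the paper disposes of by a toric blow-up at the intersection point (producing two disjoint, non-homologous square-zero spheres, contradicting the first bullet), whereas you invoke the equality case of the reverse Cauchy--Schwarz inequality, handling $s_is_{i+1}>1$ and $s_is_{i+1}=1$ uniformly; your explicit attention to upgrading proportionality to equality of classes (via adjunction and torsion-freeness of $H_2$) is a detail the paper's argument leaves implicit.
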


These constraints  follow easily from the $b^+(X)=1$ condition. The following lemma, derived from Lemmas   \ref  {lem:   homologous components} and  \ref {lem: non-negative components}, is very useful for Theorems  \ref{theorem: anti-canonical}, \ref {prop: convex-concave} and  \ref {cor: finite stein}.

\begin{lemma} [\cite{LiMa17-contact}] \label{lem: |D| > 4 => D at most two consecutive nonnegative spheres}
Suppose $D$ is  a cycle of spheres in $X$ with  $b^+(X)=1$.  Up to cyclic permutation and orientation of $D$, we have

$\bullet$  If $r(D)\geq 5$, then $r^{\geq 0}(D)\leq 2$. When $r^{\geq 0}(D)=2$,    $s_1\geq 0, s_2=0$. 

$\bullet$ If $r(D)=4$ and  $r^{\geq 0}(D)\geq 3$, then  $S(D)=(k\geq 0, 0, l<0, 0), [C_2]=[C_4], l+k\leq 0$.

$\bullet$  If  $r(D)=4$ and $r^{\geq 0}(D)=2$, then   either $S(D)=(0, l_1<0, 0, l_2<0), [C_1]=[C_3]$ or $(s_i)=(k\geq 0, 0, l_1<0, l_2<0), l_1+l_2+k\leq 0$. 

$\bullet$ If  $r(D)=3$ and $r^{\geq 0}(D)=3$, then the only possibilities of $S(D)$ are (i) $(1, 1, 1), [C_1]=[C_2]=[C_3]$,  (ii) $(1, 1, 0), [C_1]=[C_2]$, (iii)  $(2\geq k\geq 0, 0, 0)$.

$\bullet$ If $r(D)=3$ and $r^{\geq 0}(D)=2$, then the only possibilities of  $S(D)$ are (i)  $(1, 1, p<0), [C_1]=[C_2]$,  (ii)  $(k\geq 0 , 0, p<0), p+k\leq 2$.

$\bullet$ If  $r(D)=2$ and  $r^{\geq 0}(D)=2$, then  the only possibilities of $S(D)$ are  $(4, 1), (4, 0), (3, 1),\\ (3,0),  (2, 2), (2, 1), (2, 0), 
 (1, 1), (1, 0), (0, 0)$.

$\bullet$ If  $r(D)=2$ and  $r^{\geq 0}(D)=1$, then  $S(D)=(k\geq 0, p<0)$.

$\bullet$ If  $r(D)=2$ and  $r^{\geq 0}(D)=0$, then   $S(D)$ is one of $(-1, -1),  (-1, -2), (-1, -3) $.

\end{lemma}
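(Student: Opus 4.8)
The plan is to run a case analysis organized exactly by the pair $(r(D),r^{\geq 0}(D))$, feeding the combinatorial placement of the non-negative components into Lemmas \ref{lem:   homologous components} and \ref{lem: non-negative components}, and then extracting every numerical bound from $b^+(X)=1$. The one structural input I would isolate at the outset is the following elementary consequence of $b^+(X)=1$: the intersection form on $H_2(X;\R)$ has signature $(1,b^-(X))$, so for any nonzero class $v$ with $v^2=0$ the orthogonal complement $v^\perp$ is negative semidefinite with radical $\langle v\rangle$, while any plane containing $v$ on which $v$ pairs nontrivially is hyperbolic. Each self-intersection inequality in the statement will be produced by taking the (essentially unique) square-zero component as the null class $v$, writing down an explicit integral combination of the remaining $[C_i]$ orthogonal to $v$, and observing that its square equals the asserted sum and is therefore $\le 0$. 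I would also record once and for all that for $r(D)\ge 3$ adjacent components satisfy $[C_i]\cdot[C_{i+1}]=1$ while non-adjacent ones are disjoint (hence homologically orthogonal), and that for $r(D)=2$ one has $[C_1]\cdot[C_2]=2$.

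For $r(D)\ge 5$ the argument is purely combinatorial. By Lemma \ref{lem:   homologous components} no two components are homologous, while by the first bullet of Lemma \ref{lem: non-negative components} any two non-adjacent non-negative components would be homologous; hence every two non-negative components are adjacent, and since a cycle graph on $\ge 5$ vertices is triangle-free its cliques have size at most $2$, forcing $r^{\geq 0}(D)\le 2$. When $r^{\geq 0}(D)=2$ the two non-negative components are adjacent, and the last bullet of Lemma \ref{lem: non-negative components} forbids $s_is_{i+1}\ge 1$ once $r(D)>3$, so one of them vanishes, giving $(s_1\ge 0,s_2=0)$ after choosing the orientation. For $r(D)=4$ and $r(D)=3$ the same placement logic (opposite versus adjacent non-negative components, triangle for $r=3$, the extreme $r^{\geq 0}(D)=4$ case being the all-zero configuration handed to us directly by the third bullet of Lemma \ref{lem: non-negative components}) pins down which components must be $0$ and which pairs must be homologous; the residual inequalities then come from the null-vector computation. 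For example, in the case $r(D)=4$, $S(D)=(k,0,l,0)$ with $[C_2]=[C_4]$, the class $[C_1]-[C_3]$ is orthogonal to the null class $[C_2]$ and has square $k+l$, whence $k+l\le 0$; the case $(k,0,l_1,l_2)$ with $\alpha=[C_1]-[C_3]+[C_4]$ and the $r=3$ bounds $k+p\le 2$, $k\le 2$ with $\alpha=[C_1]-[C_3]$ are identical in spirit.

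For $r(D)=2$ the two components meet twice, and I would split by $r^{\geq 0}$. If both self-intersections are positive, the form $\left(\begin{smallmatrix} s_1 & 2\\ 2 & s_2\end{smallmatrix}\right)$ cannot be positive definite under $b^+(X)=1$, forcing $s_1s_2\le 4$ and hence the finite sublist $(1,1),(2,1),(3,1),(4,1),(2,2)$; the homologous possibility $[C_1]=[C_2]$ is pinned to $(2,2)$ by Lemma \ref{lem:   homologous components}. If some self-intersection is zero, say $s_2=0$, signature alone no longer bounds the other, and here I would invoke the structure theory of symplectic $4$-manifolds with $b^+=1$: a square-zero symplectic sphere exhibits $X$ as a (rational) ruled surface with that sphere a fiber, so $C_1$ is a double section, and Riemann--Hurwitz together with the genus-zero adjunction condition caps $s_1$ at $4$, producing $(k,0)$ with $0\le k\le 4$. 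The all-negative list $(-1,-1),(-1,-2),(-1,-3)$ I would obtain from the same realizability input — balancing the $(-1)$-component by a toric blow-down to a single nodal sphere, or directly bounding via the effective cone of the rational surface — and the rows with $r^{\geq 0}(D)\le 1$ carry no extra constraint.

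I expect the genuine obstacle to be precisely this last, $r(D)=2$, analysis. Pure lattice and light-cone reasoning bounds only the product $s_1s_2$, never the individual self-intersections, so the absolute bound $s_i\le 4$ and the exclusion of configurations such as $(-2,-2)$ must come from honest rational-surface input: rulings, the adjunction formula along the square-zero fiber, and reduction to a minimal model. By contrast the $r(D)\ge 3$ rows are routine once the two preceding lemmas and the null-vector trick are in hand, the only real care being bookkeeping — verifying that the list is exhaustive and stable under cyclic permutation and reversal of orientation, so that each configuration is recorded exactly once in its normal form.
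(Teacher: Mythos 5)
Your treatment of the cases $r(D)\geq 3$ is essentially the paper's own argument: the placement combinatorics (non-adjacent non-negative components are homologous of square zero, adjacent ones are constrained by the fourth bullet of Lemma \ref{lem: non-negative components}, no triangles in a cycle of length $\geq 5$) is exactly what the paper does, and your null-vector computations --- e.g.\ $([C_1]-[C_3])^2=k+l\leq 0$ against the isotropic class $[C_2]$, and $([C_1]-[C_3]+[C_4])^2=k+l_1+l_2$ --- are correct and actually make explicit the inequalities that the paper only asserts. Likewise the subcase $r(D)=2$ with both self-intersections positive is the paper's determinant argument. You are also right, and to your credit, that the $r(D)=2$ analysis is where the real content lies: the paper's own proof for $r(D)=2$ consists of the single line ``check that $\det Q_D=s_1s_2-4\leq 0$,'' which does not bound $(k,0)$ at all.

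There are, however, two genuine gaps in your $r(D)=2$ discussion. First, for $(k,0)$ your appeal to ``Riemann--Hurwitz plus genus-zero adjunction'' is not enough: writing $[C_1]=2s+cf-\sum m_ie_i$ in a ruled basis, adjunction with $g=0$ gives $k=4+\sum\bigl((m_i-1)^2-1\bigr)$, which exceeds $4$ for $m_i\geq 3$ (e.g.\ $2s+4f-3e_1$ has square $7$, genus $0$, and meets $f$ twice). To cap $k$ at $4$ you must also invoke positivity of intersections with exceptional classes (here $[C_1]\cdot(f-e_i)=2-m_i\geq 0$), i.e.\ the full reduction to the minimal model for \emph{symplectic} spheres --- and note that this entire step silently upgrades the hypothesis from ``cycle of spheres'' to ``cycle of symplectic spheres.'' Second, and more seriously, your proposed argument for the all-negative bullet (toric blow-down to a nodal sphere, or the effective cone) cannot succeed as stated, because the conclusion is not a consequence of the stated hypotheses: an $I_2$ fiber of a rational elliptic surface is a cycle of two holomorphic $(-2)$-spheres meeting at two points in a $b^+=1$ manifold, and cusp cycles such as $(-2,-3)$ embed in iterated blow-ups of a conic-plus-line configuration. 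The list $(-1,-1),(-1,-2),(-1,-3)$ is precisely the set of all-negative length-two sequences with $\det Q_D=s_1s_2-4<0$, i.e.\ with $b^+(Q_D)=1$; that hypothesis (which is how the lemma is actually used in Propositions \ref{lem: rigid} and \ref{prop: exact}) must be imported for this bullet, after which it is again a one-line determinant check rather than a realizability argument.
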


To describe the plumbed 3-manifold $Y_D$, we introduce the matrix in  $SL_2(\mathbb Z)$ for a sequence of integers
$(-t_1, \cdots, -t_k)$,
\[ A(-t_1,\dots,-t_k)= \begin{pmatrix}
-t_k & 1 \\
-1 & 0 \end{pmatrix}
\begin{pmatrix}
-t_{k-1} & 1 \\
-1 & 0 \end{pmatrix} \dots 
\begin{pmatrix}
-t_1 & 1 \\
-1 & 0 \end{pmatrix}.\]

\begin{lemma}  [Theorem 6.1 in \cite{Ne81}, Theorem 2.5 in \cite{GoLi14}]   \label{lem: property of continuous fraction}
For a cycle of spheres $D$ with self-intersection sequence $S(D)=(s_1, ..., s_k)$, the  plumbed 3-manifold $Y_D$ is the oriented torus bundle $T_A$ 
over $S^1$ with  monodromy  $A=A(-s_1,\dots,-s_k)$.
The intersection matrix $Q_D$ is non-degenerate   if the trace of $A(-s_1,\dots,-s_k)\ne 2$.   

\end{lemma}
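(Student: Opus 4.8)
The plan is to prove the two assertions separately: first the torus-bundle structure of $Y_D$ together with the identification of the monodromy, and then the non-degeneracy criterion for $Q_D$.

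For the first assertion I would build $Y_D=\partial N_D$ by decomposing it into one piece per vertex of the cycle. Realize $N_D$ as the plumbing of the $k$ disk bundles $E_i\to C_i$ (each $C_i\cong S^2$ with Euler number $s_i$) along the cyclic graph. On $C_i$ choose two disjoint disks $D_i^{-},D_i^{+}$ centered at the two nodes (the intersections with $C_{i-1}$ and $C_{i+1}$), and set $A_i=C_i\setminus\mathrm{int}(D_i^{-}\cup D_i^{+})$, an annulus. Since $A_i$ retracts to a circle and $SO(2)$ is connected, $E_i|_{A_i}$ is a trivial $D^2$-bundle, so the part of $\partial N_D$ lying over $A_i$ is the unit circle bundle $W_i:=A_i\times S^1\cong T^2\times[0,1]$, with boundary tori $T_i^{-},T_i^{+}$ over $\partial D_i^{\mp}$. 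The claim to verify is that $Y_D=\bigcup_{i=1}^k W_i$, where the two tori produced at each node are identified, $T_i^{+}$ with $T_{i+1}^{-}$. A cyclic union of $k$ copies of $T^2\times[0,1]$ glued end to end is by construction the mapping torus of the composite gluing map, i.e. an oriented torus bundle over $S^1$; so it only remains to compute the monodromy.

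To compute the monodromy I would fix on each torus the basis (fiber class, section class) and track two elementary moves. Traversing $W_i$ from $T_i^{-}$ to $T_i^{+}$ the fiber is preserved while the preferred section jumps by $s_i$ fibers — this is exactly the meaning of the Euler number $s_i$ — contributing a shear $T_i=\begin{pmatrix}1&0\\-s_i&1\end{pmatrix}$; crossing a node interchanges the base and fiber directions of the two adjacent bundles, contributing the oriented swap $J=\begin{pmatrix}0&1\\-1&0\end{pmatrix}$. Composing once around the cycle gives
$$A=(JT_k)(JT_{k-1})\cdots(JT_1),\qquad JT_i=\begin{pmatrix}-s_i&1\\-1&0\end{pmatrix},$$
which is exactly $A(-s_1,\dots,-s_k)$. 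The main obstacle here is bookkeeping: pinning down the orientation conventions at the nodes and along the fibers so that the two elementary matrices appear with precisely these signs, rather than merely up to conjugacy. It is reassuring that the answer is forced up to the cyclic-relabelling ambiguity already built into a cycle, which corresponds to cyclically permuting the product, i.e. to conjugating $A$ (and leaving the oriented torus bundle unchanged).

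For the second assertion I would argue purely algebraically. For $k\geq 3$ the intersection matrix is the cyclic tridiagonal matrix with diagonal $(s_1,\dots,s_k)$ and all sub-, super- and corner entries equal to $1$, so $Q_Dv=0$ is equivalent to the periodic recursion $v_{i-1}+s_iv_i+v_{i+1}=0$ for all $i$ modulo $k$. Writing this with the transfer matrices $M_i=\begin{pmatrix}-s_i&-1\\1&0\end{pmatrix}$, a nonzero solution exists precisely when $M_k\cdots M_1$ fixes a nonzero vector, i.e. has $1$ as an eigenvalue. Each $M_i$ lies in $SL_2(\Z)$, and for $B\in SL_2$ one computes $\det(B-I)=2-\mathrm{tr}(B)$, so $1$ is an eigenvalue iff $\mathrm{tr}(M_k\cdots M_1)=2$. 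Finally $M_i=P\begin{pmatrix}-s_i&1\\-1&0\end{pmatrix}P^{-1}$ with $P=\mathrm{diag}(1,-1)$, whence $\mathrm{tr}(M_k\cdots M_1)=\mathrm{tr}(A)$. Therefore $Q_D$ is degenerate iff $\mathrm{tr}(A)=2$, which gives the stated criterion together with its converse. The short cases $k=1,2$, where the off-diagonal entries are not all equal to $1$, I would check by hand: for $k=2$ one has $\det Q_D=s_1s_2-4=\mathrm{tr}(A)-2$ directly.
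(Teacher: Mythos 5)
Your argument is correct in both halves. Note that the paper does not prove this lemma at all---it is quoted directly from Theorem 6.1 of Neumann's plumbing-calculus paper and Theorem 2.5 of Golla--Lisca---so there is no internal proof to compare against; what you have written is essentially the standard argument underlying those references: the decomposition of $\partial N_D$ into trivial circle bundles over annuli glued by fiber/base swaps at the nodes, and a transfer-matrix analysis of the periodic recursion $v_{i-1}+s_iv_i+v_{i+1}=0$ for the kernel of $Q_D$. Your second half in fact proves more than the lemma asserts, namely the equivalence ``$Q_D$ degenerate $\iff \operatorname{tr}A=2$'' (equivalently the classical identity $\det Q_D=(-1)^k(\operatorname{tr}A-2)$, which your $k=2$ computation $s_1s_2-4=\operatorname{tr}A-2$ confirms), and your conjugation $M_i=P\bigl(\begin{smallmatrix}-s_i&1\\-1&0\end{smallmatrix}\bigr)P^{-1}$ with $P=\operatorname{diag}(1,-1)$ checks out. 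The only point you rightly flag as delicate---fixing orientation conventions so the shear and swap come out as $\bigl(\begin{smallmatrix}1&0\\-s_i&1\end{smallmatrix}\bigr)$ and $\bigl(\begin{smallmatrix}0&1\\-1&0\end{smallmatrix}\bigr)$ on the nose rather than up to sign and conjugacy---is exactly the content of Neumann's calculus, and since trace and conjugacy class are all that the second assertion and the oriented diffeomorphism type of $T_A$ depend on, the residual ambiguity is harmless for the statement being proved.
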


\subsection{Toric minimal pairs}

 \begin{lemma}\label{lem: not negative semi-definite => at least one non-negative}
Any cycle of sphere is toric equivalent to a toric minimal one or one with sequence $(-1, p)$. If $S(D)=(-1,p)$, then $Q_D$ is degenerate only if $p=-4$.

Suppose $D$ is  a toric minimal cycle of spheres with  sequence $S(D)=(s_i)$. Then

$\bullet$  $b^+(Q_D)\geq 1$    if and only if  $s_i\geq 0$ for some $i$.

$\bullet$  $Q_D$ is  negative definite if 
$s_i\leq -2$   for all $i$ and less than $-2$ for some $i$.
   $Q_D$ is  negative semi-definite but not    negative definite if 
$s_i= -2$ for each $i$. 


$\bullet$    $Q_D$ is non-degenerate 
 if either  $s_1\geq 0$ and $s_i\leq -2$ for $i\geq 2$, or    $s_1=s_2=0$ and $s_i\leq -2$ for $i\geq 3$, 
\end{lemma}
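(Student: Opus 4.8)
The plan is to treat the four assertions in turn: the toric reduction, the degeneracy of $(-1,p)$, the two signature bullets, and finally the non-degeneracy criterion, for which I would route everything through the trace formula of Lemma \ref{lem: property of continuous fraction} rather than through determinants.

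First I would prove the reduction statement by induction on $r(D)$. Whenever $D$ has a component $C_i$ with $s_i=-1$ and $r(D)\geq 3$, its two cyclic neighbours are distinct, so the toric blow-down at $C_i$ is defined and yields a cycle of spheres with $r$ lowered by one. Since $r(D)$ strictly decreases, the process terminates, and it can only stop either when no component is exceptional (a toric minimal cycle) or when $r(D)=2$ with an exceptional component, i.e. with sequence $(-1,p)$. For such a $2$-cycle the two spheres meet at two points, so $[C_1]\cdot[C_2]=2$ and $Q_D=\begin{pmatrix}-1 & 2\\ 2 & p\end{pmatrix}$, whence $\det Q_D=-p-4$; this vanishes exactly when $p=-4$, which gives the claimed ``only if''.

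For the two signature bullets I would work with $B:=-Q_D$, whose diagonal entries are $-s_i$ and whose off-diagonal entries are the non-negative intersection numbers $[C_i]\cdot[C_j]$. Each component meets exactly its two cyclic neighbours, each once (for $k=2$ the single neighbour twice), so the off-diagonal row sum of $B$ is always $2$. When every $s_i\leq -2$, each diagonal $-s_i\geq 2$ meets or exceeds this row sum, so $B$ is weakly diagonally dominant with positive diagonal, hence positive semidefinite by Gershgorin and $b^+(Q_D)=0$. If moreover some $s_i<-2$, that row is strictly dominant and $B$ is irreducible (the cycle is connected), so by the standard criterion for irreducible, weakly diagonally dominant symmetric matrices with positive diagonal and one strictly dominant row, $B$ is positive definite, i.e. $Q_D$ is negative definite. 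If instead all $s_i=-2$, then $B\mathbf 1=0$ for the all-ones vector $\mathbf 1$ (each row sum is $-s_i-2=0$), so $B$ is positive semidefinite but singular, giving negative semidefinite but not negative definite. Conversely, if some $s_i\geq 0$, then either $s_i>0$ and $[C_i]$ is a positive class, or $s_i=0$ and the class $a[C_i]+[C_{i+1}]$ has square $2a\,[C_i]\cdot[C_{i+1}]+s_{i+1}>0$ for large $a$; either way $b^+(Q_D)\geq 1$. Combined with the semidefinite case, this proves the first bullet.

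For the non-degeneracy bullets I would apply Lemma \ref{lem: property of continuous fraction}: $Q_D$ is non-degenerate once $\mathrm{tr}\,A\neq 2$, where $A=A(-s_1,\dots,-s_k)$ is the ordered product of the matrices $M_j=\begin{pmatrix}-s_j & 1\\ -1 & 0\end{pmatrix}$. The technical input is a trace estimate for the negative-definite tail chain: setting $t_j=-s_j\geq 2$ and $a_0=1,\ a_1=t_1,\ a_m=t_m a_{m-1}-a_{m-2}$, one checks inductively that $a_m$ is positive and increasing and that a length-$m$ product with all weights $\geq 2$ has the form $\begin{pmatrix}a_m & b_m\\ -a_{m-1} & -b_{m-1}\end{pmatrix}$ with $a_m>b_m\geq 0$, so its trace $a_m-b_{m-1}$ is $\geq 2$. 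In the case $s_1\geq 0,\ s_i\leq -2\ (i\geq 2)$ I would factor $A=A'M_1$ with $A'=A(-s_2,\dots,-s_k)$; expanding gives $\mathrm{tr}\,A=-s_1 a_{k-1}-b_{k-1}-a_{k-2}\leq -a_{k-2}<0$, so $\mathrm{tr}\,A\neq 2$. In the case $s_1=s_2=0,\ s_i\leq -2\ (i\geq 3)$ I would use $M_1=M_2=\begin{pmatrix}0 & 1\\ -1 & 0\end{pmatrix}$ and $M_2M_1=-I$ to get $A=-A(-s_3,\dots,-s_k)$, so $\mathrm{tr}\,A=-\mathrm{tr}\,A'\leq -2\neq 2$. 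In both cases Lemma \ref{lem: property of continuous fraction} yields non-degeneracy. The main obstacle is precisely the $s_1=s_2=0$ case: one non-negative vertex reduces by a Schur complement to the manifestly negative-definite tail, but two adjacent zeros make the relevant complement indefinite, so a determinant expansion reveals no definite sign; passing to the monodromy trace (via $M_2M_1=-I$) is what makes it tractable, and the one genuinely substantive step is the sign control of the sequences $a_m,b_m$ that pins down $\mathrm{tr}\,A'$.
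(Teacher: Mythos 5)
Your proof is correct and, on the one substantive point (the third bullet), takes exactly the paper's route: via Lemma \ref{lem: property of continuous fraction} one reduces to showing $\mathrm{tr}\,A\neq 2$, factors the monodromy as the negative-definite tail times the head (multiplying by $M_1$ when $s_1\ge 0$, resp.\ using $M_2M_1=-I$ when $s_1=s_2=0$), and controls the sign of the trace by the positivity and monotonicity of the continued-fraction numerators $a_m,b_m$ --- which is precisely the content of Neumann's Lemma 5.2 that the paper invokes --- while your Gershgorin/irreducible-diagonal-dominance argument for the signature bullets simply supplies the standard proof of what the paper cites as well known (Lemma 8.1 in Neumann). The only blemish is a sign slip in describing $B=-Q_D$: its off-diagonal entries are $-[C_i]\cdot[C_j]\le 0$, not non-negative, but your argument only ever uses their absolute values (for the Gershgorin radii, which equal $2$) and the correctly signed row sums $-s_i-2$ (for $B\mathbf{1}=0$), so nothing breaks.
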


The first statement is by definitions (Notice that we do not allow nodal components). The second statement is obvious. 
Bullets 1, 2 are well-known (cf. Lemma 8.1 in \cite{Ne81}).  To  prove the 3rd bullet,  by Lemma \ref{lem: property of continuous fraction}, 
we just need to  that the trace of the monodromy matrix  is not equal to $2$, which is a direct calculation using  Lemma 5.2 in \cite{Ne81}.

  Each  toric minimal, negative definite cycle $D$ with $s(D)\leq -2$ has a dual cycle $\check D$, with the property that the plumbed manifolds $Y_D$ and $Y_{\check D}$ are orientation reversing diffeomorphic (Theorem 7.1 in \cite{Ne81}). 
   To describe the dual cycle we use the $2$ by $k$ matrix $\begin{pmatrix}a_1 &\dots & a_k\cr b_1 &\dots &b_k \end{pmatrix}$ to represent  the sequence 
$(a_1,-2,\dots,-2,a_2,\dots,a_k),$
where $a_i\leq -3$ and there are $b_i$ many $-2$ between $a_i$ and $a_{i+1}$.
For a negative definite toric minimal cycle $D$ with $s(D)\leq -2$, we have either two $a_i$ terms or $a_i\leq -4$ for some $i$.
The dual cycle $\check{D}$ is  represented by the $2$ by $k$ matrix $\begin{pmatrix} \check{a}_i&=&-b_i-3 \cr \check{b}_i&=&-a_{i+1}-3   \end{pmatrix}$. It is easy to check  that $\check D$ is also toric minimal, negative definite and $s(\check D)\leq -2$.  
A remark is that we can also view the elliptic pairs $(s)$ and $(-s)$ as dual pairs in the sense that boundary 3-manifolds are  orientation reversing diffeomorphic.

\section{Algebraic geometry of Looijenga pairs}
In this section we very briefly review  some basic results  of Looijenga pairs $(Y, D)$, which have or might have symplectic analogues.   
Please consult the survey article  \cite{Fr} and \cite{GrHaKe11}.

\subsection{Torelli and deformation}

There are several versions of the Torelli theorem. The following is Theorem 8.5 in \cite{Fr}. 

\begin{theorem} [A global Torelli]
Given Looijenga pairs and an isomorphism of lattices $\mu$ compatible with $D$, there is a isomorphism $f$ of Looijenga pairs such that $\mu=f^*$ if and only if $\mu$ preserves the nef cone. 
\end{theorem}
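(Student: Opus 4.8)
The plan is to prove both implications, putting essentially all of the difficulty into the converse. The forward direction is formal: if $f\colon (Y,D)\to (Y',D')$ is an isomorphism of Looijenga pairs with $\mu=f^{*}$, then $f^{*}$ is a lattice isometry carrying $[D_i']$ to $[D_i]$, so it is compatible with $D$, and, being the pullback along a biregular map, it sends ample classes to ample classes and hence carries $\mathrm{Nef}(Y')$ isomorphically onto $\mathrm{Nef}(Y)$. This already explains why the nef cone is the right obstruction to isolate.

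For the converse I would argue in two stages, the first of which is Hodge-theoretic and contains the real work. Compatibility with $D$ fixes the marking of the boundary and, in this setup, the period point of each pair; I would feed this into the period theory of anticanonical pairs to realize $\mu$ by an analytic correspondence. Concretely, I would combine local Torelli — the period map from the space of marked pairs to the period torus $\mathrm{Hom}(D^{\perp},\C^{*})$ is a local isomorphism — with the connectedness of the moduli of pairs of a fixed lattice type, the algebraic analogue of the deformation statement of Theorem \ref{thm: symplectic deformation class=homology classes}, to conclude that two pairs with the same marking and period are related by a birational map $f\colon Y\dashrightarrow Y'$ inducing $\mu$. At this stage $f$ is only a birational correspondence: the period determines the open log Calabi--Yau surface $Y\setminus D$, but not the chosen projective compactification, so reflections in the internal $(-2)$-classes may a priori intervene.

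In the second, elementary stage I would upgrade $f$ to a biregular isomorphism using the nef cone hypothesis together with the birational geometry of surfaces. Resolving $f$ as $Y\xleftarrow{p}W\xrightarrow{q}Y'$ with $p,q$ composites of blow-ups, $f$ fails to be an isomorphism precisely when it contracts a curve in one of the two directions. But a curve $C$ contracted by $f$ imposes on $\mathrm{Nef}(Y)$ a bounding wall $\{\alpha:\alpha\cdot[C]=0\}$ with no counterpart on the $Y'$ side, contradicting $\mu(\mathrm{Nef}(Y'))=\mathrm{Nef}(Y)$; the same applies to $f^{-1}$. Hence $f$ contracts nothing in either direction, i.e. it is an isomorphism in codimension one, and a pseudo-isomorphism between smooth projective surfaces is automatically biregular, so $f$ is the desired isomorphism of pairs with $f^{*}=\mu$.

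The main obstacle is the first stage. Everything after one has a pseudo-isomorphism is formal, and the nef cone enters only to select the correct chamber among the $(-2)$-reflections. The genuine content is producing any analytic correspondence realizing the abstract $\mu$ from the lattice and period data, which rests on the surjectivity and local injectivity of the period map, the connectedness of the moduli space, and a careful analysis of the monodromy — the group generated by reflections in internal $(-2)$-curve classes — that measures the failure of naive injectivity.
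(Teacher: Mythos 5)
This statement is quoted verbatim from Friedman's survey (Theorem 8.5 there, resting on the Torelli theorem of Gross--Hacking--Keel), and the paper supplies no proof of it, so your sketch has to be measured against the actual argument in the literature rather than against anything internal to this survey. Your overall architecture is the right one: the forward direction is formal, and the converse is weak Torelli modulo the group generated by reflections in internal $(-2)$-classes, with the nef cone condition eliminating the residual ambiguity. But as written there are two genuine gaps. First, the claim that ``compatibility with $D$ fixes \ldots the period point of each pair'' is false. The period $\phi_Y\in\mathrm{Hom}\bigl(\langle [D_i]\rangle^{\perp},\C^{*}\bigr)$ is data independent of the lattice isometry and of where $\mu$ sends the boundary classes; the hypothesis $\phi_Y=\phi_{Y'}\circ\mu$ is part of Friedman's notion of compatibility and is indispensable --- without it the statement is simply wrong, and you cannot derive it. Relatedly, ``local Torelli plus connectedness of the moduli space'' does not yield injectivity of the period map up to reflections: a local isomorphism from a connected space need not be injective, and establishing that the fibre of the period map over a fixed marking is a single orbit of the admissible reflection group is precisely the hard content of Gross--Hacking--Keel, which your stage one presupposes rather than proves.

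Second, stage two is aimed at the wrong target. Weak Torelli already produces a \emph{biregular} isomorphism $g$ of pairs; what is not controlled is the induced map on cohomology, which is $w\circ\mu$ for some element $w$ of the Weyl group generated by reflections in classes of internal $(-2)$-curves. There is no birational correspondence to resolve, no contracted curves, and no flops for surfaces, so the wall-and-pseudo-isomorphism argument you give has nothing to act on. The correct use of the hypothesis is lattice-theoretic: $g^{*}$ preserves the nef cone because $g$ is an isomorphism, $\mu$ preserves it by assumption, hence $w=g^{*}\circ\mu^{-1}$ preserves $\mathrm{Nef}(Y)$; since the nef cone is a strict fundamental domain for the action of this reflection group, $w=\mathrm{id}$ and $g^{*}=\mu$. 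Your closing paragraph gestures at exactly this (``the nef cone enters only to select the correct chamber among the $(-2)$-reflections''), but the argument actually written in stage two does not establish it and is in tension with that correct remark.
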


Two anticanonical pairs are said to be (holomorphically) deformation equivalent if they are both isomorphic to fibers of a family of anticanonical pairs
over a connected base. 
The following two statements are given in Theorem 3.1 and Theorem 5.14 in \cite{Fr} respectively.

\begin{theorem} \label{thm: finite kahler deformation}
 There are only finitely many deformation types of Looijenga pairs with the same self-intersection sequence. 
Two Looijenga pairs are deformation equivalent if they are homology equivalent. 
\end{theorem}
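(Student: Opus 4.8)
The plan is to run both statements through the blow-up calculus for anticanonical pairs, using the observation that the self-intersection sequence already pins down the coarse numerical invariants. Write $D = C_1 \cup \cdots \cup C_n$ with $S(D) = (s_i)$, so that $n = r(D)$ and $D^2 = s(D) = \sum_i (s_i + 2)$ are fixed by the sequence. Since $Y$ is rational and $D \in |-K_Y|$, Noether's formula gives $b_2(Y) = 10 - K_Y^2 = 10 - D^2$, so the second Betti number of the total space is also determined by the sequence. This is the numerical engine behind finiteness, and I would isolate it first.

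Next I would set up the reduction to minimal pairs. Because $D = -K_Y$, adjunction forces every $(-1)$-curve $E \subset Y$ to satisfy $E \cdot D = 1$; hence $E$ is either a $(-1)$-sphere occurring as a component of the cycle (removed by a toric/corner blow-down) or an interior $(-1)$-curve meeting a single component transversally (removed by a non-corner blow-down). Blowing down repeatedly, one reaches a minimal anticanonical pair $(Y_0, D_0)$ with no $(-1)$-curve at all, so $Y_0$ is a minimal rational surface ($\mathbb{P}^2$ or a Hirzebruch surface, hence $b_2(Y_0) \le 2$), and these form a short explicit list. Conversely $(Y, D)$ is recovered from $(Y_0, D_0)$ by corner and non-corner blow-ups at boundary points, possibly infinitely near. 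Each blow-up raises $b_2$ by $1$, so the total number of blow-ups equals $b_2(Y) - b_2(Y_0)$, which is bounded since $b_2(Y) = 10 - D^2$ is fixed; moreover the corner blow-ups raise the component count by $1$ while non-corner ones preserve it, so the split into $n - r(D_0)$ corner and the remaining non-corner blow-ups is also fixed. With the number of components and the number of blow-ups both bounded, only finitely many combinatorial recipes (which component each blow-up lies over, its infinitely-near tree, and the corner data) occur. For a fixed recipe the admissible configurations of blow-up centres form a connected parameter space, and each minimal model has connected moduli, so each recipe yields a single deformation type. This gives the first statement.

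For the second statement, the direction ``deformation $\Rightarrow$ homology'' is immediate: a family over a connected base induces a diffeomorphism carrying $[C^0_i]$ to $[C^1_i]$. The substance is the converse. A homological equivalence is precisely an isomorphism $\Phi^* : H^2(Y^1,\mathbb{Z}) \to H^2(Y^0,\mathbb{Z})$ of intersection lattices sending $[C^1_i]$ to $[C^0_i]$, and hence $K_{Y^1}$ to $K_{Y^0}$. I would argue that such a marked-lattice isomorphism forces the two pairs to share the same blow-up recipe over a common minimal model: the classes $[C_i]$ together with the exceptional classes determine how many non-corner blow-ups sit over each component and the entire corner structure, so the combinatorial type computed above coincides for the two pairs. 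Once both $(Y^0, D^0)$ and $(Y^1, D^1)$ are exhibited as blow-ups of $(Y_0, D_0)$ along combinatorially identical recipes, one deforms the centres of the first configuration to those of the second inside the connected parameter space of that recipe, producing a family of Looijenga pairs joining them. Here the global Torelli statement quoted above enters, guaranteeing that the abstract lattice isomorphism is realized by a genuine isomorphism of pairs at the endpoints.

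The main obstacle in both halves is the dictionary that turns the marked lattice $(H^2(Y,\mathbb{Z}), \{[C_i]\})$ into a well-defined combinatorial blow-up recipe and back: one must show that a lattice isomorphism respecting the $[C_i]$ is always induced by a geometric one (this is the force of global Torelli together with the analysis of which classes are represented by $(-1)$-curves and of the nef-cone chambers), and that infinitely-near configurations are correctly bookkept so that the parameter space of each recipe is genuinely connected. Once this correspondence is in place, both finiteness and the deformation-equals-homology criterion follow from the boundedness of $b_2(Y) = 10 - D^2$ and $n = r(D)$.
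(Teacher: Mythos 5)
First, a point of comparison: the paper does not prove this theorem at all --- it is quoted verbatim from Friedman's survey (Theorems 3.1 and 5.14 of \cite{Fr}), so your attempt has to be measured against Friedman's arguments rather than anything in this text. Your proof of the finiteness statement is essentially the same as Friedman's: Noether's formula pins down $b_2(Y)=10-D^2$, the number of corner and interior blow-downs to a minimal pair is therefore bounded, the minimal pairs with a prescribed sequence form a finite explicit list, and each bounded recipe of centres varies in a connected (hence irreducible) parameter space. This half is correct in outline. Two small inaccuracies: the terminal objects of the blow-down process are not always pairs ``with no $(-1)$-curve at all'' on a minimal surface --- the process can terminate at $\mathbb{F}_1=\mathbb{C}P^2\#\overline{\mathbb{C}P^2}$ when the only $(-1)$-curves are components of a length-two cycle whose contraction would destroy the cycle structure (this is exactly case (D) in the paper's list of minimal models) --- and ``each minimal model has connected moduli'' quietly uses the classification of minimal pairs with a given sequence, which must be checked by hand. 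Neither affects finiteness.

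The second statement is where there is a genuine gap. Your pivotal claim is that a marked-lattice isometry $\Phi^*$ sending $[C^1_i]$ to $[C^0_i]$ ``forces the two pairs to share the same blow-up recipe over a common minimal model,'' because ``the classes $[C_i]$ together with the exceptional classes determine'' the recipe. But the hypothesis only matches the $[C_i]$; it does not match the sets of classes actually represented by irreducible $(-1)$-curves, and that set is \emph{not} an invariant of the marked lattice --- it depends on the position of the ample cone, which $\Phi^*$ is not assumed to preserve. Two homology-equivalent pairs can a priori have $(-1)$-curves in entirely different numerical classes, so the recipes you extract need not correspond under $\Phi^*$. Closing this gap is precisely the content of Friedman's Theorem 5.14: one must (a) deform each pair to a generic one, where every class $E$ with $E^2=E\cdot K=-1$ and $E\cdot[C_i]\ge 0$ is represented by an exceptional curve; (b) correct the given isometry by reflections in $(-2)$-classes orthogonal to the $[C_i]$ (realized by monodromy) so that it carries nef cone to nef cone, which is where the global Torelli theorem and the chamber structure enter; and (c) verify the base case that homology-equivalent \emph{minimal} pairs are deformation equivalent, by inspection of the classification. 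You name (a) and the nef-cone issue yourself as ``the main obstacle,'' but deferring it is deferring the whole theorem: everything before that point is bookkeeping, and the argument as written does not go through without it.
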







\subsection{Cusp singularities}

A cusp singularity is the germ of an isolated, normal surface singularity such that the exceptional divisor of the minimal resolution
is a cycle of smooth rational curves  $D$ meeting transversely. 
For normal surface singularities, there is a notion of Kodaira dimension $\kappa^{\delta}$, 
and Gorenstein surface singularities  with $\kappa^{\delta}=0$ are simple elliptic singularities and   cusp singularities   (cf. \cite{OhOn09} and the references therein). 

Cusp singularities come in dual pairs, and their minimal resolutions are given as dual cycles. Every pair of dual cycles embed in a Hirzebruch-Ionue surface as the only curves. 
A cusp singularity is called rational if its minimal resolution is realized as the anti-canonical divisor of a rational surface. 
By the Mumford-Grauert criterion, any toric minimal, negative definite Looijenga pair  $(Y, D)$ arises as the minimal resolution of a rational cusp singularity. 
Looijenga proved that  a cusp is rational if its dual cusp is smoothable and he conjectured the converse is also true. 
The Looijenga conjecture was proved in \cite{GrHaKe11} via mirror symmetry and later by integral-affine geometry in \cite{En}. 


\section{Deformation classes of symplectic log CY pairs}

In this section we give a  brief outline of the proof of Theorem \ref {thm: symplectic deformation class=homology classes} and Theorem \ref{theorem: anti-canonical}.

\subsection{Operations and minimal pairs}

It involves the operations of non-toric blow-up/down  and the notion of minimal models.  
A {\it non-toric blow-up} of $D$ is the  proper transform of a symplectic blow-up centered  at a smooth point of $D$.
A  non-toric blow-down is the reverse operation which symplectically blows down an exceptional sphere  not contained in $D$.
These operations preserve the log Calabi-Yau condition and there are analogues in the holomorphic category, sometimes referred as  interior blow-up/blow-down.

A  symplectic log Calabi-Yau pair  $(X,D,\omega)$ is called 
 minimal   if $(X, \omega)$ is minimal,  or 
   $(X, D, \omega)$ is a symplectic  Looijenga pair
with $X=\mathbb{C}P^2 \# \overline{\mathbb{C}P^2}$. 
For any  symplectic log Calabi-Yau pair  $(X,D,\omega)$, we  apply  first a maximal sequence of non-toric blow-downs using \cite{McOp13} and then a maximal sequence of toric blow-downs. The resulting  toric minimal pair, which  is  actually minimal due to  \cite{Pi08},   is called a minimal model of $(X,D,\omega)$.

We  enumerate the minimal symplectic log Calabi-Yau pairs (modulo cyclic symmetry), all of them having length less than $5$. 
 
 

$\bullet$  Case $(A)$: The base genus of $X$ is $1$. $D$ is a torus. 

 
 
 

$\bullet$  Case $(B)$: $X=\mathbb{C}P^2$,
$c_1=3h$. 

$(B1)$ $D$ is a torus, 

$(B2)$ $D$ consists of a $h-$sphere and a $2h-$sphere, or

$(B3)$ $D$ consists of three $h-$spheres.

$\bullet$ Case $(C)$: $X=\mathbb{S}^2 \times \mathbb{S}^2$, $c_1=2f_1+2f_2$, where $f_1$ and $f_2$ are the homology classes
of the two factors. 

$(C1)$ $D$ is a torus. 

$(C2)$ $r(D)=2$ and  $[C_1]=bf_1+f_2, [C_2]=(2-b)f_1+f_2$.

$(C3)$ $r(D)=3$ and  $[C_1]=bf_1+f_2, [C_2]=f_2, [C_3]=(1-b)f_1+f_2$.

$(C4)$ $r(D)=4$ and $[C_1]=bf_1+f_2,  [C_2]=f_1, [C_3]=-bf_1+f_2, [C_4]=f_1$.

The graphs in (C1), (C2), (C3) and (C4) are given respectively  by
$$      \xymatrix{
       \bullet^{8} 
	} 
	\quad  
 \xymatrix{
       \bullet^{2b} \ar@{=}[r] & \bullet^{4-2b} 
	} 
	\quad 
	\xymatrix{
       \bullet^{2b} \ar@{-}[d] \ar@{-}[r] & \bullet^{0} \ar@{-}[dl] \\
       \bullet^{2-2b}
	} 
	\quad   
	\xymatrix{
       \bullet^{2b} \ar@{-}[d] \ar@{-}[r] & \bullet^{0} \ar@{-}[d] \\
       \bullet^{0} \ar@{-}[r] & \bullet^{-2b} \\}
       $$

$\bullet$ Case $(D)$: $X=\mathbb{C}P^2 \# \overline{\mathbb{C}P^2}$, $c_1=f+2s$, where $f$ and $s$ are the fiber class and section class 
with $f\cdot f=0$, $f\cdot s=1$ and $s\cdot s=1$.


$(D2)$ $r(D)=2$,  and either 
$([C_1],[C_2])=(af+s,(1-a)f+s)$ or $([C_1],[C_2])=(2s, f)$.

$(D3)$  $r(D)=3$ and  $[C_1]=af+s, [C_2]=f, [C_3]=-af+s$.

$(D4)$ $r(D)=4$ and  $[C_1]=af+s, [C_2]=f,  [C_3]=-(a+1)f+s, [C_4]=f$.

The graphs in (D2), (D3) and (D4) are given respectively by
$$    \xymatrix{
       \bullet^{2a+1} \ar@{=}[r] & \bullet^{3-2a} }
       \quad
        \xymatrix{
      \bullet^{4} \ar@{=}[r] & \bullet^{0} }
      \quad
       \xymatrix{
       \bullet^{2a+1} \ar@{-}[d] \ar@{-}[r] & \bullet^{0} \ar@{-}[dl] \\
       \bullet^{-2a+1}  \\
	}
	\quad
	 \xymatrix{
       \bullet^{2a+1} \ar@{-}[d] \ar@{-}[r] & \bullet^{0} \ar@{-}[d] \\
       \bullet^{0} \ar@{-}[r] & \bullet^{-2a-1} \\
	}	
       $$

\subsection{Classification by homology equivalence}
There are two steps to prove Theorem \ref {thm: symplectic deformation class=homology classes}. One step is to 
show that each (strict) homology type of  minimal pairs contains a unique (strict) deformation class via a combination of pseudo-holomorphic curve techniques and 
Thurston type symplectic construction in the setting of a pair of a symplectic 4-manifold with a smooth symplectic surface.

We also introduce marked divisors and establish the invariance of their (strict) deformation class under toric and non-toric blow-up/down operations (cf. also \cite{OhOn03}). This invariance property reduces Theorem \ref{thm: symplectic deformation class=homology classes} to the minimal case. 
The statement that each symplectic deformation class contains a K\"ahler   pair is not stated in \cite{LiMa16-deformation} but it follows  from the proof outlined above since 
each minimal pair clearly deforms to  a K\"ahler pair (cf. Section 3 in \cite{LiMa16-deformation} and Theorem 2.4 in \cite{Fr}) and blow-up/down can be performed in the K\"ahler category. 

We remark that  Theorem \ref {thm: symplectic deformation class=homology classes} should also apply  to  the cases of irreducible nodal spheres and cuspidal spheres using \cite{Ba} and \cite{OhOn05-cuspidal} respectively. 

\begin{proof} [Proof of Corollary \ref{cor: finite deformation}]
By Theorem \ref {thm: symplectic deformation class=homology classes}, every  symplectic deformation class contains a K\"ahler pair.  The finiteness of Looijenga pairs follows directly from Theorem  \ref{thm: finite kahler deformation}. For elliptic symplectic log Calabi-Yau pairs, where the sequences are of length $1$,   the finiteness  is more straightforward--it follows from the
finiteness of symplectic deformation types in  the case of minimal pairs for each  $(s)$, where $s=0, 8, 9$ (cf. Section 3 in \cite{LiMa16-deformation}),  and the fact that there is only one way to (non-toric) blow up, up to deformation.  \end{proof}


\subsection{Anti-canonical sequences}

Due to the classification of minimal symplectic log Calabi-Yau pairs, it is a combinatorial problem to determine the anti-canonical sequences. 
There are also  various conditions on spherical circular sequences with $b^+=1$  in   Lemma \ref{lem: not negative semi-definite => at least one non-negative}, Lemma   \ref{lem: |D| > 4 => D at most two consecutive nonnegative spheres},  Lemma  \ref{lem: non-negative components}. The first statement of Theorem \ref{theorem: anti-canonical} that every spherical circular sequence with $b^+=1$ is anti-canonical is deduced from these lemmas, the list of minimal pairs, the observation that whether a spherical circular sequence is anti-canonical only depends on its toric equivalence class, 
and




 
\begin{prop}
Suppose $D\subset (X, \omega)$ is a cycle of spheres in a rational surface $(X, \omega)$ with minimal complement. Then
 $s(D)\leq 9$, and  $S(D)\ne (5+l, -l)$ with $l\geq 2$. 

$D$ represents $c_1 (X, \omega)$ if 

$\bullet$  $s_i\geq -1$ for any $i$, or  

$\bullet$ $S(D)=(1, -p_1+1, -p_2, ...., -p_{l-1}, -p_l+1)$ with $p_i\geq 2$, $l\geq 2$. 
\end{prop}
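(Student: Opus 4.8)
The organizing object is the difference class $E:=\mathrm{PD}(c_1(X,\omega))-[D]\in H_2(X)$. Since each $C_i$ is a symplectic sphere, the adjunction formula gives $c_1\cdot[C_i]=s_i+2$, while the cycle intersection pattern gives $[D]\cdot[C_i]=s_i+2$; hence $E\cdot[C_i]=0$ for every $i$. Consequently $E\cdot[D]=0$ and $E^2=c_1^2-s(D)$, so that \emph{$D$ represents $c_1$ if and only if $E=0$} and $s(D)=c_1^2-E^2$. The whole proposition thereby reduces to locating this single class $E$ and computing its square.

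First I would record the consequences of the light cone lemma. Whenever $s(D)=[D]^2>0$ the span of the $[C_i]$ contains a positive vector, so (as in the proofs behind Lemma \ref{lem: non-negative components}) its orthogonal complement, which contains $E$, is negative semidefinite; thus $E^2\le 0$ and $s(D)\ge c_1^2$. For the upper bound $s(D)\le 9$ I would pass to a toric minimal representative --- legitimate because the representing-$c_1$ property and the class $E$ behave well under toric moves and minimality of the complement is preserved --- and invoke Lemma \ref{lem: not negative semi-definite => at least one non-negative}: if $Q_D$ is negative (semi)definite then all $s_i\le -2$ and $s(D)\le 0$ trivially, so only the case $b^+(Q_D)=1$ remains. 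There I would blow $(X,\omega)$ down to a minimal rational model $\hat X$; minimality of $X\setminus D$ forces every exceptional sphere of the reduction to meet $D$, which both bounds the number of blow-ups in terms of the intersections $[D]\cdot[C_i]$ and pins down $c_1^2=\hat c_1^2-N$ with $\hat c_1^2\in\{8,9\}$. Combining this with the enumeration of non-negative components in Lemmas \ref{lem: non-negative components} and \ref{lem: |D| > 4 => D at most two consecutive nonnegative spheres} yields $s(D)\le 9$.

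For the two sufficient conditions I would prove $E=0$ directly. The plan is to represent $E$ (or $-E$) by the exceptional spheres appearing in the minimal reduction and note that, since $X\setminus D$ is minimal, each such sphere meets $D$ with strictly positive intersection; this contradicts $E\cdot[D]=0$ unless no such sphere occurs, i.e. $E=0$. The hypotheses ``$s_i\ge -1$ for all $i$'' and the explicit sequence $(1,-p_1+1,\dots,-p_l+1)$ are precisely what confine $S(D)$, within the toric equivalence classes generated by the minimal pairs of Cases (B), (C), (D), to configurations where the negative definite orthogonal complement of $\mathrm{span}([C_i])$ admits no nonzero integral class of the required form --- so $E=0$ by inspection. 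The excluded sequences $(5+l,-l)$, $l\ge 2$, have $s(D)=9$ but $\det Q_D=-(l+1)(l+4)$, and realizing them with minimal complement would force a nonzero $E$, equivalently an exceptional sphere disjoint from $D$, contradicting minimality; hence they do not occur.

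The main obstacle is the case of a component with large positive self-intersection, e.g. $C_1^2=5+l$: here the light cone lemma alone does not bound $s(D)$, so one must use the minimal model reduction and the non-negative-component enumeration together to control simultaneously the image $[\hat D]^2$ in $\hat X$, the number $N$ of blow-ups, and the precise location of $E$. Tracking how the intersection pattern of the cycle interacts with the exceptional spheres forced into, or out of, the minimal complement is the delicate bookkeeping on which the bound $s(D)\le 9$, the exclusion of $(5+l,-l)$, and the vanishing of $E$ all rest.
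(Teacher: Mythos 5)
Your homological setup is sound and matches the skeleton of the argument the paper outsources to Ohta--Ono (Theorem 6.10), Li--Zhang (Proposition 3.14) and Golla--Lisca (Theorem 3.1): the class $E=\mathrm{PD}(c_1)-[D]$ does satisfy $E\cdot[C_i]=0$, $c_1\cdot E=E^2=c_1^2-s(D)$, and the light cone lemma does give $E^2\le 0$ once $s(D)>0$. But the three substantive claims all rest on steps that are asserted rather than proved. First, $s(D)\le 9$ is \emph{not} a consequence of this bookkeeping: since $s(D)=c_1^2-E^2$ and $c_1^2\le 9$, you need the lower bound $E^2\ge c_1^2-9=-N$, and the light cone lemma gives only the opposite inequality. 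There are integral classes violating your conclusion that are not excluded by arithmetic --- e.g.\ $E=-h-2e_1+e_2$ in $\mathbb{C}P^2\#2\overline{\mathbb{C}P^2}$ satisfies $c_1\cdot E=E^2=-4$ and would give $s(D)=11$ --- so ruling them out requires genuine geometric input (Seiberg--Witten/Taubes or pseudoholomorphic curve theory showing such an $E$ cannot be orthogonal to a cycle with minimal complement), which your sketch of ``minimal model reduction plus the enumeration of non-negative components'' does not supply.

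Second, your mechanism for forcing $E=0$ is unjustified. Since $c_1\cdot E=E^2$, adjunction numerics make $E$ a \emph{torus-type} class (formal genus $1$), not an exceptional class, and there is no reason it should decompose as a sum of exceptional classes of a chosen minimal reduction. Even granting such a decomposition, deducing a contradiction with minimality of $X-D$ requires producing an embedded pseudoholomorphic representative disjoint from $D$ for an almost complex structure making $D$ holomorphic --- a non-generic $J$ --- which is exactly the technical content of the cited theorems (and of McDuff--Opshtein-type results) that your proposal replaces with ``by inspection.'' The same gap undermines your exclusion of $(5+l,-l)$: the asserted equivalence between ``$E\ne 0$'' and ``an exceptional sphere disjoint from $D$'' is the theorem to be proved, not a reformulation of it. The one case your argument does close is $c_1^2=9$: there $X=\mathbb{C}P^2$, $E=0$ by negative definiteness of $[D]^\perp$, and $[C_2]^2=-l$ cannot be a square --- but for $c_1^2<9$ the argument as written does not run.
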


This proposition is proved using Theorem 6.10 in \cite{OhOn03}, Proposition 3.14 in \cite{LiZh11}, Theorem 3.1 in \cite{GoLi14}, and a direct verification to exclude $(5+l, -l)$ with $l\geq 0$.


For the second statement of Theorem \ref{theorem: anti-canonical} that   any  anti-canonical sequence  with $b^+=1$ is rigid,
it follows from the following propositions and  the observation that whether an anti-canonical sequence
is  rigid 
only depends on its toric equivalence class. 


 \begin{prop}\label{lem: positive parabolic} 
Suppose $(s_i)$ is an  anti-canonical sequence
and it belongs to one in the following list. 

 $\bullet$ $(1, -p_1+1, -p_2, ...., -p_{l-1}, -p_l+1)$ with $p_i\geq 2$, $l\geq 2$ so $r(D)\geq 3$.  
 
 $\bullet$ $(0, 0, 0, n)$ with $n\leq 0$.

$\bullet$  $(1, 1, p), p\leq 1$.
  

 $\bullet$ $(1, p)$ with $p\geq 4$.  
  
 $\bullet$  $(0, n)$ with $n\leq 4$.
 
 
 $\bullet$   $s_i\geq -1$ for each $i$. 
 
 $\bullet$  $(-1, -2)$ and $(-1, -3)$.
 
  Then $(s_i)$ is rigid. 
 
 
 \end{prop}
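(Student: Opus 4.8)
The plan is to show that any cycle of symplectic spheres $D=\cup C_i$ with $S(D)=(s_i)$ in the list and minimal complement $V=X\setminus N_D$ actually represents $\mathrm{PD}(-K_\omega)$, which is the log Calabi--Yau condition. The first step is an adjunction reduction. For each symplectic sphere $C_i$ the adjunction formula gives $K_\omega\cdot[C_i]=-s_i-2$, while the cycle structure gives $[D]\cdot[C_i]=[C_i]^2+\sum_{j\ne i}[C_j]\cdot[C_i]=s_i+2$ (each of the two neighbours of $C_i$ contributes $1$, and in length two the single partner contributes $2$). Hence the class $\beta:=[D]+K_\omega$ is orthogonal to every $[C_i]$, and the whole problem reduces to upgrading this orthogonality to the vanishing $\beta=0$, i.e.\ to excluding ``fake'' realizations in which $[D]$ is orthogonal to the components without actually equalling $-K_\omega$.

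Next I would pin down the ambient manifold. Every listed sequence except $(-1,-2)$, $(-1,-3)$ and the all-negative cycles inside the sixth bullet contains a component with $s_i\ge 0$, and such a symplectic sphere forces $(X,\omega)$ to be rational or irrational ruled (McDuff; cf.\ \cite{Liu96}, \cite{OhOn96}). I would rule out the irrational ruled case as follows: in the genus-zero Lefschetz fibration over a genus-one base, each $C_i$ lies in a fibre (a sphere maps to a point in the elliptic base), so the connected configuration $D$ lies in a single genus-zero fibre; but that fibre is a tree of spheres and carries no cycle, a contradiction. Thus $X$ is rational with $b_1(X)=0$. The cases $(-1,-2)$ and $(-1,-3)$ are precisely the length-two cycles that Lemma~\ref{lem: not negative semi-definite => at least one non-negative} isolates as not toric-reducible to a minimal model; together with the all-$(-1)$ cycles they must be handled separately, establishing rationality by other means (for example from the positive- or zero-square symplectic surface obtained by smoothing the nodes of $D$, together with Seiberg--Witten input) before running the homological argument below.

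With $X$ rational the natural finish is to invoke the preceding proposition, whose conclusion ``$D$ represents $c_1(X,\omega)$'' is exactly $\beta=0$; it applies directly to the families $s_i\ge -1$ (sixth bullet) and $(1,-p_1+1,\dots,-p_l+1)$ (first bullet). For the remaining, genuinely parabolic, sequences I would reduce to these covered forms by toric blow-ups/blow-downs, legitimate because rigidity, rationality, non-degeneracy of $Q_D$ and the oriented type of $Y_D$ are toric invariants (note that $s(D)$ itself is \emph{not}, so toric moves may change the number of interior blow-downs). Where no clean toric reduction exists --- the length-two cycles and the two bullet-seven cases --- I would argue directly: since each listed anticanonical sequence has $b^+(Q_D)=1=b^+(X)$, the image of $\langle[C_i]\rangle$ in $H_2(X)$ carries the positive cone, so $\beta$ lands in the negative (semi)definite orthogonal complement; then $\beta^2\le 0$, while the bilinear bookkeeping gives $\beta^2=K_\omega^2-s(D)$, and minimality of $V$ (via Lemma~\ref{lem: homology of neighborhood} and the absence of exceptional spheres in $V$) forbids $\beta^2<0$, forcing $\beta=0$.

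The hard part will be precisely this exclusion of $\beta\ne 0$, which is what separates the rigid sequences here from non-rigid ones such as $(5+l,-l)$: a nonzero $\beta$ in the negative-definite complement of $\langle[C_i]\rangle$ records extra interior blow-ups, and ruling these out amounts to showing $V$ contains no exceptional sphere, which rests on Taubes--Seiberg--Witten / Li--Liu existence of exceptional classes. This is delicate exactly in the parabolic cases, where $Q_D$ is degenerate and the $[C_i]$ need not be linearly independent in $H_2(X)$ --- for instance the forced coincidence $[C_1]=[C_2]$ in $(1,1,p)$ and the homologous $0$-spheres in $(0,0,0,n)$ coming from Lemmas~\ref{lem: homologous components} and~\ref{lem: |D| > 4 => D at most two consecutive nonnegative spheres} --- so the positive-cone and orthogonal-complement bookkeeping must be carried out on the image lattice with care. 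I expect the bullet-seven cases and these homologous-component configurations to need the most bespoke handling.
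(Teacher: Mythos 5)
Your reduction to showing $\beta:=[D]+K_\omega=0$ is a reasonable framing, and the light-cone bookkeeping is sound: each listed sequence admits a combination $\sum z_i[C_i]$ of positive square, so with $b^+(X)=1$ the orthogonal complement of $\mathrm{span}\{[C_i]\}$ is negative definite, and $\beta\cdot[C_i]=0$ for all $i$ gives $\beta^2\le 0$ with equality only if $\beta=0$. The gap is the reverse inequality. You assert that ``minimality of $V$ forbids $\beta^2<0$,'' i.e.\ $K_\omega^2\ge s(D)$, but no mechanism is supplied, and this is not a formal consequence of anything you have established: writing $X=\C P^2\#N\overline{\C P^2}$ one has $\beta^2=K_\omega^2-s(D)=9-N-s(D)$, so the claim is exactly a bound on the number of blow-ups in terms of the sequence --- which is the entire content of rigidity. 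Knowing that $V$ contains no exceptional sphere does not by itself bound $N$; one must show that $\beta\ne 0$ \emph{produces} an exceptional class $E$ with $E\cdot[C_i]=0$ for all $i$, together with an embedded symplectic representative disjoint from $D$. Neither step is automatic: $\beta$ could a priori have the form $E_1-E_2$ with both $E_j$ meeting $D$, and even when a suitable class exists, realizing it by a sphere in $V$ requires a $J$ adapted to $D$, positivity of intersections, and control of degenerations. This is precisely where the paper does something different: it imports the classification of Stein and minimal symplectic fillings of the torus bundles $Y_D$ from Theorems 3.1, 3.2, 3.5 of \cite{GoLi14} and Proposition 7.1 of \cite{OhOn03} (the latter applied after smoothing $D$ to a symplectic torus for the bullet $s_i\ge -1$), supplements this with direct identification of the classes $[C_i]$ for $(0,0,0,n)$ and $(1,1,p)$, and treats $(-1,-2)$, $(-1,-3)$ by a separate blow-up trick. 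The paper explicitly flags those last two cases as more delicate, which is a signal that the naive exceptional-sphere argument does not go through there.

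Two secondary points. First, for $(-1,-3)$ your rationality argument is also incomplete: smoothing the nodes yields a symplectic torus of square $s(D)=0$, which gives neither $K_\omega\cdot[\omega]<0$ nor an immediate exclusion of irrational ruled (or other) ambient manifolds, so rationality in the all-negative cases needs its own argument rather than a deferral. Second, your appeal to the preceding proposition does correctly dispose of the bullets $s_i\ge -1$ and $(1,-p_1+1,\dots,-p_l+1)$, and this matches the paper, since both propositions are deduced from the same external filling classifications; but for the remaining bullets the proposal is a plan whose central step is left unexecuted rather than a proof.
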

 
 \begin{prop} \label{lem: rigid} Suppose $(X, D, \omega)$ is a symplectic Looijenga pair with $b^+(Q_D)= 1$. Then $S(D)$ is toric equivalent to one in Proposition \ref{lem: positive parabolic}. 
\end{prop}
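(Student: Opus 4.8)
The plan is to prove the statement by first collapsing $D$ to a canonical representative of its toric equivalence class, and then classifying. Since membership of $S(D)$ in the list of Proposition~\ref{lem: positive parabolic} is manifestly a property of the toric equivalence class, and since corner (toric) blow-up and blow-down preserve both the log Calabi-Yau condition and the value of $b^+(Q_D)$ --- under a toric blow-up, which replaces $X$ by $X\#\overline{\mathbb{C}P^2}$, the sublattice spanned by the components only grows by the exceptional direction $E$ with $E^2=-1$, while $b^+$ stays $\le b^+(X)=1$ and the positive direction is carried along --- I may replace $D$ by its toric minimal form. By Lemma~\ref{lem: not negative semi-definite => at least one non-negative} this form is either genuinely toric minimal or the exceptional sequence $(-1,p)$. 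The latter I would dispose of directly: computing $Q_D$ shows $b^+(Q_D)=1$ forces $p\ge -3$, and every such $(-1,p)$ already occurs in the list, via the item $s_i\ge -1$ or the explicit sequences $(-1,-2)$, $(-1,-3)$.

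So assume $D$ is toric minimal. Then $b^+(Q_D)=1$ together with the first bullet of Lemma~\ref{lem: not negative semi-definite => at least one non-negative} forces $s_i\ge 0$ for some $i$, i.e.\ $r^{\ge 0}(D)\ge 1$. First I would run through the possibilities for the pair $(r(D),r^{\ge 0}(D))$ using Lemma~\ref{lem: |D| > 4 => D at most two consecutive nonnegative spheres}, which in turn rests on Lemma~\ref{lem: non-negative components} and the restrictions on homologous components. This locates the nonnegative components, fixes their self-intersections, and shows that for $r(D)\ge 5$ there is at most one pair of adjacent nonnegative components, one of them a $0$-sphere. For the short cycles $r(D)\le 4$ the resulting finite list of shapes can be matched against the listed forms one by one --- the families with two balancing $0$-spheres landing under $(0,0,0,n)$, those built around a repeated $1$-sphere under $(1,1,p)$, and the two-component cases under $(1,p)$, $(0,n)$, or $s_i\ge -1$ --- consistently with the enumeration of minimal symplectic log Calabi-Yau pairs in Cases $(B)$, $(C)$, $(D)$.

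The genuine content is the long cycles, where $r^{\ge 0}(D)=1$ and the only nonnegative component is a single $1$-sphere: these are the parabolic family $(1,-p_1+1,-p_2,\dots,-p_{l-1},-p_l+1)$. Here the homological constraints say nothing term-by-term about the intervening $\le -2$ components, so I cannot finish by counting nonnegative spheres. The plan is to control them through the monodromy invariant of Lemma~\ref{lem: property of continuous fraction}: the toric equivalence class of a cycle is recorded by the oriented torus bundle $Y_D$, hence by the $\mathrm{SL}_2(\Z)$ conjugacy class of $A(-s_1,\dots,-s_k)$. I would combine the global anticanonical normalization $s(D)=K_\omega^2\le 9$ with the continued-fraction normal form of this monodromy, using the $0$-sphere balancing move illustrated earlier to transport a sequence to its listed representative, in order to force the sequence into the parabolic shape up to toric equivalence.

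The main obstacle I expect is precisely this last step. The homological lemmas are blind to the strings of $\le -2$ components, so the argument must route entirely through the anticanonical bound and the continued-fraction calculus, and the delicate point is to verify that this bookkeeping genuinely closes the classification --- that no anticanonical cycle carrying a nonnegative component slips outside the listed forms --- rather than leaving behind a residual family.
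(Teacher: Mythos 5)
Your reduction to a toric minimal representative (with $(-1,p)$ correctly disposed of), the observation that $r^{\geq 0}(D)\geq 1$ from Lemma \ref{lem: not negative semi-definite => at least one non-negative}, and the case split on $(r(D),r^{\geq 0}(D))$ via Lemmas \ref{lem: non-negative components} and \ref{lem: |D| > 4 => D at most two consecutive nonnegative spheres} is exactly the paper's skeleton. But there is a genuine gap at the step you yourself flag as the ``main obstacle,'' and it sits in a different place than you think. A toric minimal cycle whose unique nonnegative component is a $1$-sphere is \emph{already} an instance of the first bullet of Proposition \ref{lem: positive parabolic}: that family imposes no constraint on the intervening components beyond $\leq -2$ in the interior and $\leq -1$ at the two ends, so there is nothing left to control and no continued-fraction bookkeeping is needed. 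The configurations that actually require work, and which your proposal never resolves (for any length, including $r(D)=2$ with $(k\geq 2,\, l\ll 0)$), are: a unique nonnegative component with $s_1\geq 2$, and the configurations involving a $0$-sphere. The paper closes these with two concrete moves. First, toric blow up $s_1-1$ times at a corner of the $s_1$-sphere, turning $(s_1\geq 2, s_2,\dots,s_k)$ successively into $(1,-1,-2,\dots,-2,s_2-1,s_3,\dots,s_k)$, which lands verbatim in the first bullet since all entries after the leading $1$ are negative. Second, use the balancing move of Example \ref{eg: balancing self-intersection by $0$-sphere} to create two adjacent $0$-spheres, then either toric blow down a resulting $(-1)$-sphere (reducing the length and feeding an induction) or toric blow up between the two $0$-spheres and contract both of their proper transforms, producing $(1, s_3+1, s_4,\dots, s_{r-1}, s_r+1)$ of length $r-1$, again in the first bullet.

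Your proposed substitute for this step --- routing through the $SL_2(\Z)$ conjugacy class of $A(-s_1,\dots,-s_k)$ together with the bound $s(D)\leq 9$ --- does not close the argument. Lemma \ref{lem: property of continuous fraction} only makes the conjugacy class an \emph{invariant} of toric equivalence; to conclude that a given cycle is toric equivalent to a listed one you would need a converse normal-form statement, which you do not supply, and even granting it you would still have to exhibit a listed representative in each class. Since you concede you cannot verify that this bookkeeping ``genuinely closes the classification,'' the proof is incomplete as it stands; the fix is the elementary blow-up/blow-down calculus above rather than the monodromy invariant.
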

 
 Proposition \ref{lem: positive parabolic}, except for the last bullet,  is proved using  Proposition 7.1 in \cite{OhOn03}, 
 Theorems 3.1, 3.2, 3.5 in \cite{GoLi14} and similar arguments. The cases $(s_i)=(-1, -2)$ and $(-1, -3)$ are more delicate, requiring a blowup trick. 
 Proposition \ref {lem: rigid} is proved by Lemmas \ref{lem: not negative semi-definite => at least one non-negative},   \ref{lem: |D| > 4 => D at most two consecutive nonnegative spheres},   \ref{lem: non-negative components},  the toric move in Example \ref{eg: balancing self-intersection by $0$-sphere} and induction on the length of $D$.  

\section{Contact aspects}

Let $(X, D, \omega)$ be a symplectic log Calabi-Yau pair. 
A  neighborhood $N'$ of $D$ is called a  concave (resp. convex) neigborhood if $N'$ is a  concave (resp. convex)  symplectic manifold.
$D$ is called concave  (resp. convex) if   for any neighborhood $N'$ of $D$, there is a concave (resp. convex) plumbing neighborhood $N_D \subset N'$.   A necessary condition for $D$ to be either convex or concave is $\omega$ being exact on the boundary of any plumbing neighborhood. 
Here is a  local criterion. 

\begin{lemma}
\label{lem: non-degenerate intersection form}  
$\omega|_{Y_D}$ is exact if and only if there is a solution for $z$ to the equation $Q_Dz=a$, where $a=([\omega]\cdot [C_1],\dots,[\omega]\cdot [C_k])$ is the area vector. 
In particular, this holds if  $Q_D$ is non-degenerate.
Moreover,  this condition only depends on the toric equivalence class. 
\end{lemma}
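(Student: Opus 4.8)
The plan is to prove the three assertions of Lemma~\ref{lem: non-degenerate intersection form} in turn, treating the first (the exactness criterion) as the heart of the matter and deriving the other two as corollaries. I would begin by unwinding what exactness of $\omega|_{Y_D}$ means cohomologically. Since $N_D$ deformation retracts onto $D$, the class $[\omega|_{N_D}] \in H^2(N_D;\R)$ is determined by its pairings with the generators $[C_1],\dots,[C_k]$ of $H_2(N_D)$, i.e.\ by the area vector $a$. The restriction $\omega|_{Y_D}$ is exact precisely when $[\omega|_{N_D}]$ lies in the image of the restriction map $H^2(N_D;\R) \to H^2(Y_D;\R)$ having zero image, equivalently when $[\omega|_{N_D}]$ comes from a relative class in $H^2(N_D, Y_D;\R)$. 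By Lefschetz duality $H^2(N_D,Y_D) \cong H_2(N_D)$ and the map $H^2(N_D,Y_D) \to H^2(N_D)$ is exactly the intersection form $Q_D$ (as noted in the excerpt, $Q_D$ is identified with $Q_D\colon H_2(N_D)\to H_2(N_D,Y_D)$, and dualizing gives the restriction map). Thus $[\omega|_{N_D}]$ lifts to a relative class iff $a$ lies in the image of $Q_D$ acting on $H_2(N_D;\R)=\R^k$, which is precisely the solvability of $Q_D z = a$.

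The key step, then, is to set up the long exact sequence of the pair $(N_D, Y_D)$ in cohomology with real coefficients,
\[
H^1(Y_D;\R) \longrightarrow H^2(N_D,Y_D;\R) \xrightarrow{\ \jmath\ } H^2(N_D;\R) \xrightarrow{\ \rho\ } H^2(Y_D;\R),
\]
and identify $\jmath$ with $Q_D$ under Lefschetz duality $H^2(N_D,Y_D;\R)\cong H_2(N_D;\R)$ and Poincar\'e duality on the closed surfaces giving $H^2(N_D;\R)\cong\R^k$ with the dual basis. Under these identifications the statement ``$\rho[\omega]=0$'' (exactness of $\omega|_{Y_D}$) is equivalent to ``$[\omega]\in\operatorname{im}\jmath$'', i.e.\ to the existence of $z$ with $Q_D z = a$. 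The ``in particular'' clause is immediate: if $Q_D$ is non-degenerate then $Q_D\colon\R^k\to\R^k$ is surjective, so the equation is always solvable and $\omega|_{Y_D}$ is automatically exact. I would take care to work over $\R$ throughout, since $\omega$ is a de~Rham class and the area vector need not be integral.

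For the final assertion, that the condition depends only on the toric equivalence class, I would argue that a single toric blow-up preserves solvability of $Q_D z = a$. A toric blow-up inserts an exceptional $(-1)$-sphere $E$ between adjacent $C_i, C_{i+1}$ and lowers both self-intersections by one; homologically the new configuration $\widetilde D$ has $H_2(N_{\widetilde D})=H_2(N_D)\oplus\Z[E]$, with $[E]$ meeting $[C_i]$ and $[C_{i+1}]$ once and $[E]\cdot[E]=-1$, while the proper transforms satisfy $[\widetilde C_i]=[C_i]-[E]$ etc. The new area vector appends the entry $[\omega]\cdot[E]$. I would check directly that the enlarged system $Q_{\widetilde D}\widetilde z = \widetilde a$ is solvable iff the original one is, using the relation $[\omega]\cdot[C_i] = [\omega]\cdot[\widetilde C_i] + [\omega]\cdot[E]$ among areas (and noting the blow-up is a local symplectic operation so the total area class transforms accordingly). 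The main obstacle I anticipate is bookkeeping the precise change of basis and the sign conventions in the Lefschetz-duality identification of $\jmath$ with $Q_D$, and verifying the invariance computation for the blow-up cleanly rather than by brute force; the cleanest route is to observe that a toric blow-up changes $Y_D$ by an orientation-preserving diffeomorphism (Lemma~\ref{lem: homology of neighborhood}'s companion, the third bullet of the toric-invariance lemma) and that exactness of $\omega|_{Y_D}$ is a diffeomorphism-invariant property of the boundary contact-type condition, reducing the invariance claim to the geometric fact already recorded in the excerpt.
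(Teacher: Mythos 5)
Your proposal is correct and follows essentially the same route as the paper: the first statement is exactly the Lefschetz-duality argument (exactness of $\omega|_{Y_D}$ $\Leftrightarrow$ $[\omega|_{N_D}]$ lifts to $H^2(N_D,Y_D;\R)$ $\Leftrightarrow$ $a\in\operatorname{im}Q_D$) that the paper attributes to \cite{LiMa14}, and your two arguments for toric invariance --- the direct linear-algebra check and the observation that a toric blow-up is a local operation leaving $Y_D$ and $\omega|_{Y_D}$ unchanged --- are precisely the two justifications the paper gives. The only blemishes are expository (the garbled sentence about the restriction map ``having zero image''), not mathematical.
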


The first statement  is observed in  \cite{LiMa14}. 
Moreover, tori blow-up/down is a local operation that does not change the the diffeomorphism type of $Y_D$ and the exactness of $\omega|_{Y_D}$. 
One can also check that the solvability  for $Q_Dz=a$ is stable under toric blow-up/down by simple linear algebra. 
When $X$ is a closed manifold, we also have 
the following criterion.

\begin{lemma}\label{lem: orthogonal decomposition}
Suppose $X$ is a closed manifold with intersection matrix $Q_X$.
Let $I_1=\iota_*(H_2(D);\R) \subset H_2(X;\R)$ and $I_2 \subset H_2(X;\R)$ be $Q_X$-orthogonal to $I_1$ in $H_2(X;\R)$.
If the span of $I_1 \cup I_2$ is $H_2(X;\R)$, then $\omega|_{Y_D}$ is exact. 
The existence of $I_2$ is preserved under toric blow-up and toric blow-down.

\end{lemma}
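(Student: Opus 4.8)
The plan is to reduce everything to the solvability criterion of Lemma \ref{lem: non-degenerate intersection form} and then read the hypothesis off the intersection form of $X$. Write $v_i=[C_i]\in H_2(X;\R)$, so that $I_1=\mathrm{span}\{v_1,\dots,v_k\}$, and let $w\in H_2(X;\R)$ be the Poincar\'e dual of $[\w]$, so that the area vector has entries $a_i=[\w]\cdot[C_i]=w\cdot v_i$. First I would reinterpret the equation $Q_Dz=a$ homologically: setting $\eta=\sum_j z_j v_j\in I_1$, the $i$-th equation reads $v_i\cdot\eta=v_i\cdot w$, i.e. $v_i\cdot(\eta-w)=0$. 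Hence $Q_Dz=a$ is solvable precisely when there exists $\eta\in I_1$ with $\eta-w\in I_1^{\perp}$, that is, precisely when $w\in I_1+I_1^{\perp}$, where $I_1^{\perp}$ denotes the $Q_X$-orthogonal complement of $I_1$ in $H_2(X;\R)$. This formulation is insensitive to linear dependences among the $v_i$, which is the point of working with the subspace $I_1$ rather than with the matrix $Q_D$.

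Given this, the first statement is immediate: if $\mathrm{span}(I_1\cup I_2)=H_2(X;\R)$ with $I_2\subseteq I_1^{\perp}$, then $H_2(X;\R)=I_1+I_2\subseteq I_1+I_1^{\perp}$, so $I_1+I_1^{\perp}=H_2(X;\R)$. In particular $w\in I_1+I_1^{\perp}$, so $Q_Dz=a$ is solvable and Lemma \ref{lem: non-degenerate intersection form} gives that $\w|_{Y_D}$ is exact.

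For the preservation statement I would first note that, since $X$ is closed and oriented, $Q_X$ is non-degenerate on $H_2(X;\R)$ by Poincar\'e duality. The existence of an admissible $I_2$ is then equivalent to $I_1+I_1^{\perp}=H_2(X;\R)$ (take $I_2=I_1^{\perp}$ for the converse), which for a non-degenerate form is in turn equivalent to $I_1\cap I_1^{\perp}=0$, i.e. to $Q_X|_{I_1}$ being non-degenerate. Thus I only need to check that non-degeneracy of $Q_X|_{I_1}$ is preserved under a toric blow-up. Under such a blow-up $H_2(X';\R)=H_2(X;\R)\oplus\R[E]$ is an orthogonal direct sum with $[E]\cdot[E]=-1$; since $[E]\in I_1'$ and the proper transforms recover the old classes ($[C_i]=([C_i]-[E])+[E]$), one has $I_1'=I_1\oplus\R[E]$, and therefore $Q_{X'}|_{I_1'}=(Q_X|_{I_1})\oplus\langle-1\rangle$. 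This block is non-degenerate if and only if $Q_X|_{I_1}$ is, which proves invariance under blow-up; blow-down is the reverse reading of the same identity.

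The only genuinely delicate point is the homological rewriting in the first paragraph: one must make sure that solvability of the possibly rank-deficient system $Q_Dz=a$ is correctly captured by the membership $w\in I_1+I_1^{\perp}$, especially in the presence of homologous components where the $v_i$ are linearly dependent (cf. Lemma \ref{lem:   homologous components}). Phrasing the criterion through $I_1$ and the ambient non-degenerate form $Q_X$, rather than through $Q_D$ itself, is exactly what dissolves this difficulty and simultaneously turns the blow-up bookkeeping into a one-line orthogonal-splitting computation.
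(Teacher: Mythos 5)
Your proof is correct and follows essentially the same route as the paper's: both arguments use the $Q_X$-orthogonal splitting $H_2(X;\R)=I_1+I_2$ to project the Poincar\'e dual of $[\w]$ onto $I_1$ and thereby produce a solution of $Q_Dz=a$, feeding into Lemma \ref{lem: non-degenerate intersection form}, and both reduce the blow-up invariance to the orthogonal identification $I_1'=I_1\oplus\R[E]$ inside $H_2(\overline{X};\R)=H_2(X;\R)\oplus\R[E]$. Your coordinate-free phrasing via $w\in I_1+I_1^{\perp}$ and the non-degeneracy of $Q_X|_{I_1}$ is a clean equivalent of the paper's basis/block-matrix bookkeeping, and it correctly handles the possible linear dependence of the classes $[C_i]$.
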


We also recall  two criterions for symplectic divisors to be contact and the definition of contact Kodaira dimension. 

\begin{theorem}[\cite{GaSt09}, \cite{McL12}] \label{GS} A negative definite symplectic divisor is convex. 
\end{theorem}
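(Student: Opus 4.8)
The plan is to show that a negative definite symplectic divisor $D$ admits a convex (concave-complement) neighborhood by constructing, near $D$, a symplectic form together with a Liouville vector field that is outward-transverse to the boundary of a suitable plumbing neighborhood. First I would invoke the symplectic neighborhood theorem for the configuration $D$: since the $C_i$ are symplectic and intersect $\omega$-orthogonally (after a Gompf-type deformation making the intersections $\omega$-orthogonal), a neighborhood of $D$ is determined up to symplectomorphism by the symplectic areas of the components and the intersection data $Q_D$. This reduces the problem to a local model that depends only on the combinatorial/homological data, so one may replace $(X,D,\omega)$ by a convenient standard plumbing built from disk bundles over the $C_i$ glued along the intersection points.

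The key analytic input is the construction of a plurisubharmonic-type function (a Liouville structure) on this plumbing. I would look for a $1$-form $\lambda$ with $d\lambda=\omega$ on a neighborhood of $Y_D$, whose dual Liouville vector field $V$ (defined by $\iota_V\omega=\lambda$) points outward along $Y_D$. Exactness of $\omega$ near $Y_D$ is exactly the hypothesis that is guaranteed here: by Lemma~\ref{lem: non-degenerate intersection form}, negative definiteness of $Q_D$ forces $Q_D$ to be non-degenerate, so the equation $Q_D z=a$ is solvable and $\omega|_{Y_D}$ is exact. The sign condition that $V$ be \emph{outward} transverse is where negative definiteness enters decisively: on each disk-bundle piece the natural radial Liouville field scales by the self-intersection number, and negative definiteness of the whole intersection form is precisely what makes the assembled radial field expanding (outward) rather than contracting across all directions of the boundary plumbing.

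Concretely, I would build a strictly plurisubharmonic function $\phi$ on the plumbing adapted to $D$ by gluing local Kähler potentials on the disk bundles, using a partition of unity compatible with the corner structure at the intersection points, and then check that $-d(d\phi\circ J)=\omega$ and that $\nabla\phi$ is outward along level sets near $Y_D$. The negative definiteness guarantees that the quadratic form governing $\phi$ near $D$ is of the correct signature so that the level sets $\{\phi=c\}$ for $c$ near the maximum are genuine contact-type (convex) hypersurfaces bounding the plumbing from the outside. This is essentially the Grauert/Gauntlett--Stipsicz and McLean construction, and the homological exactness from Lemma~\ref{lem: non-degenerate intersection form} is what lets the local potentials patch to a global one.

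The main obstacle, and the step I expect to require the most care, is the \emph{patching at the intersection (corner) points}: away from the corners each $C_i$ sits in a disk bundle with an obvious radial convex potential, but at a double point two such potentials must be reconciled so that the resulting function is still strictly plurisubharmonic and its gradient still points outward on the full boundary. One must verify the positivity of the relevant Hessian on the overlap, which is a signature computation controlled by $Q_D$ being negative definite; this is precisely the place where a weaker hypothesis (e.g.\ merely $b^+(Q_D)=0$ but $Q_D$ degenerate) would fail, matching case (iii) of Theorem~\ref{prop: convex-concave}. Once the corner model is fixed, the remaining verification that the boundary of a small plumbing neighborhood is convex is a routine transversality check, and the independence of the induced contact structure on $S(D)$ follows from the uniqueness in the symplectic neighborhood theorem.
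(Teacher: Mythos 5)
Your framework (Gompf's $\omega$-orthogonalization, reduction to a standard plumbing via the symplectic neighborhood theorem, then an outward Liouville field) is the right one and is the setting of the cited Gay--Stipsicz/McLean construction, and your use of non-degeneracy of $Q_D$ to get exactness of $\omega|_{Y_D}$ matches Lemma~\ref{lem: non-degenerate intersection form}. But the step where negative definiteness actually enters is missing, and the heuristic you substitute for it is false. It is not true that ``the natural radial Liouville field scales by the self-intersection number'' so that negativity of the diagonal assembles into an outward field: that reasoning only sees the diagonal of $Q_D$, and it would predict convexity for the cycle of two $(-1)$-spheres, whose intersection matrix $\left(\begin{smallmatrix} -1 & 2\\ 2 & -1\end{smallmatrix}\right)$ has $b^{+}=1$ and which is \emph{concave}, not convex, by Theorem~\ref{prop: convex-concave}(ii). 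What negative definiteness actually buys is the \emph{negative GS criterion}: with $a=([\omega]\cdot[C_1],\dots,[\omega]\cdot[C_k])$ the (positive) area vector, the unique solution $z$ of $Q_Dz=a$ has all entries $\leq 0$, because $-Q_D$ is positive definite with non-positive off-diagonal entries (here positivity of the intersections is used, not just definiteness of the form), so its inverse is entrywise non-negative. The entries of $z$ are the wrapping numbers that parametrize the Gay--Stipsicz local models over the components and the corners, and their common sign is exactly what makes the glued Liouville field point outward across the whole boundary, corners included. Without isolating this global linear-algebra step, the ``Hessian positivity on the overlap'' you defer to at the double points cannot be checked: it is not a local signature computation at each corner but a condition on the solution of the full system $Q_Dz=a$, which is precisely where the $(-1,-1)$ example fails and a genuinely negative definite configuration succeeds.

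A secondary sign slip points at the same confusion: you take level sets of your plurisubharmonic potential ``near the maximum,'' but a strictly plurisubharmonic function has no interior maximum, and the relevant exhaustion (in the holomorphic model, the pullback of $\|\cdot\|^{2}$ under the contraction of $D$ guaranteed by Grauert's criterion) is \emph{minimized} on $D$, with gradient pointing away from $D$ and out of the plumbing. Getting this direction right is the entire content of the theorem, so it cannot be left as ``a routine transversality check.'' Once you replace the diagonal-scaling heuristic by the negative GS criterion and feed the resulting vector $z$ into the Gay--Stipsicz construction, your outline closes up and becomes the cited proof.
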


\begin{theorem}  [\cite{LiMa14}] \label{concave}
Let $D \subset (W,\omega_0)$ be a symplectic divisor.
If  $Q_D$ is not negative definite and
 $\omega_0$ restricted to the boundary  of $D$ is exact, 
then $\omega_0$ can be locally deformed through a family of symplectic forms $\omega_t$ on $W$ keeping $D$  symplectic   and such that $(D,\omega_1)$ is a concave divisor.
Moreover, the contact structure $\xi_D$ on $Y_D$  is canonically associated to $D$ in this case and in the negative definite case. 
\end{theorem}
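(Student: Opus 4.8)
The plan is to reduce the statement to an explicit plumbing model near $D$, build the contact/Liouville data on that model by hand, and use the failure of negative definiteness of $Q_D$ to pin down the co-orientation as the \emph{concave} one. First I would invoke a relative symplectic neighborhood theorem to identify a plumbing neighborhood $N_D$ of $D$ with a standard model, on which the germ of $\omega_0$ is determined up to deformation by the area vector $a=([\omega_0]\cdot[C_1],\dots,[\omega_0]\cdot[C_k])$ and the self-intersection data $S(D)$. Using a Thurston--Weinstein (minimal coupling) construction over each component $C_i$—a radial coordinate $r_i$ in the normal disk bundle together with a connection $1$-form $\alpha_i$ encoding the framing $s_i$—and patching over the intersection regions of the plumbing graph, I would produce an explicit model form $\omega_1$ and a family $\omega_t$ joining $\omega_0$ to $\omega_1$ keeping every $C_i$ symplectic. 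Since this deformation only renormalizes the fiber directions near $D$, it can be taken supported in $N_D$, hence local as required.

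Next I would produce a primitive on the boundary. Because $\omega_0|_{Y_D}$ is exact, Lemma \ref{lem: non-degenerate intersection form} gives a solution $z=(z_1,\dots,z_k)$ of $Q_D z=a$. I would use $z$ as the vector of fiber weights: in the plumbing coordinates the model form admits fiberwise primitives $z_i\,\alpha_i$, and the equation $Q_D z=a$ is precisely the matching condition needed to glue these fiberwise primitives across the transition tori of the graph manifold $Y_D$, which Lemma \ref{lem: property of continuous fraction} realizes as the torus bundle $T_A$ with $A=A(-s_1,\dots,-s_k)$. This yields a global $1$-form $\lambda$ on a collar $Y_D\times(-\epsilon,0]$ with $d\lambda=\omega_1$, whose restriction to $Y_D$ is the candidate contact form.

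I would then verify the contact condition $\lambda\wedge d\lambda\neq 0$ on $Y_D$ and read off the co-orientation. The sign of the induced co-orientation is governed by the sign data of $z$: for a single component of self-intersection $s$ and area $a$ one has $z=a/s$, so $s>0$ gives a Liouville field pointing inward toward $D$ (concave) while $s<0$ gives the outward field recovering the convex case of Theorem \ref{GS}. In general, $Q_D$ being not negative definite is exactly what lets me choose $z$ (and, if needed, a further bounded modification of the collar) so that the Liouville field $V$ defined by $\iota_V\omega_1=\lambda$ points inward along $Y_D$; the $b^+\geq 1$ direction supplies the expansion that is absent when $Q_D$ is negative definite. \emph{The hard part will be this step}: the contact condition has to be checked on the transition tori, where the framings recorded in $Q_D$ and the monodromy $A$ must conspire with non-definiteness to give a genuine contact form and the inward direction, and the patching of the two fiber-coordinate descriptions across each edge is the delicate computation.

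Finally, to see that $\xi_D:=\ker(\lambda|_{Y_D})$ depends only on $D$ (up to the allowed local deformation), I would argue that the space of admissible choices—solutions $z$ of $Q_D z=a$, model forms $\omega_1$, and primitives $\lambda$—is connected, so any two resulting contact forms are joined by a path of contact forms and Gray stability produces a contactomorphism. Combined with the negative definite case of Theorem \ref{GS}, this gives a contact structure $\xi_D$ on $Y_D$ canonically associated to $D$ in both regimes, completing the statement.
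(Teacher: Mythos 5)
Your proposal has a genuine gap at exactly the step you flag as hard, and the gap is not a computation but a missing idea. You take the solution $z$ of $Q_D z = a$ furnished by exactness (Lemma \ref{lem: non-degenerate intersection form}) and use its entries as the fiber weights of a primitive $\lambda=\sum z_i\alpha_i$, hoping that non-negative-definiteness of $Q_D$ will force the resulting Liouville field to point inward. But the equation $Q_D z = a$ pins $z$ down (up to $\ker Q_D$); you are not free to ``choose $z$'' so that the co-orientation comes out concave. For the Gay--Stipsicz-type model to produce a \emph{concave} boundary one needs the positive criterion: a solution with all $z_i>0$ and all entries of $Q_D z$ positive. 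For the actual area vector $a$ of $\omega_0$ this can simply fail even when $Q_D$ is not negative definite, and in that case no small neighborhood of $D$ is concave at all --- this is why the theorem only asserts concavity \emph{after a local deformation of $\omega_0$}, and the deformation is not the cosmetic normalization of the plumbing model you describe: it must change the cohomology class of $\omega$ near $D$, i.e.\ change the area vector itself.

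The two ingredients your argument is missing are: (i) a linear-algebra lemma exploiting that the off-diagonal entries of $Q_D$ are non-negative (positivity of intersections) --- if $Q_D z = a$ is solvable for some $a\in(0,\infty)^k$ and $Q_D$ is not negative definite, then there exists $\bar z\in(0,\infty)^k$ with $Q_D\bar z\in(0,\infty)^k$; and (ii) an inflation argument (Lalonde--McDuff / Li--Usher) deforming $\omega_0$ through symplectic forms keeping $D$ symplectic so that the area vector moves from $a$ to (approximately) $\bar a = Q_D\bar z$, after first making the intersections $\omega$-orthogonal via Gompf. Only then does the plumbing model with weights $\bar z_i>0$ yield an inward-pointing Liouville field, and the convexity of the set $\{\bar z>0 : Q_D\bar z>0\}$ together with Gray stability gives the canonicity of $\xi_D$ (with the negative definite case handled separately by Theorem \ref{GS}). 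Your single-component heuristic $z=a/s$ actually illustrates the problem: it gives $z>0$ only when $s>0$, and in general one must move $a$ before a positive solution exists.
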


\begin{definition}[\cite{LiMa16}, \cite{LiMaYa14}]
Let $(W, \omega)$ be a  concave symplectic 4-manifold with contact boundary $(Y, \xi)$. $(W, \omega)$  is called a Calabi-Yau  cap of $(Y, \xi)$ if $c_1(W)$ is a torsion class,  
and it is called a uniruled cap of $(Y, \xi)$ if there is a contact primitive $\beta$ on the boundary such that $c_1(W)\cdot  [(\omega, \beta)]>0$.

The contact Kodaira  dimension   of a contact 3-manifold $(Y, \xi)$ is  defined in terms of uniruled caps and Calabi-Yau caps. 
Precisely, $Kod(Y, \xi)=-\infty$ if $(Y, \xi)$  has a uniruled cap, $Kod(Y, \xi)=0$ if it has a Calabi-Yau cap but no uniruled caps, $Kod(Y, \xi)=1$ if it has no Calabi-Yau caps or uniruled caps. 

\end{definition}

\subsection{Trichotomy}

Theorem \ref{prop: convex-concave} is based on the following observation in \cite{LiMa17-contact} (cf. also Theorem 2.5 in \cite{GoLi14}).

\begin{prop}\label{prop: exact}   For  a symplectic log Calabi-Yau pair  $(X, D, \omega)$,  $\omega$  is exact on $Y_D$
if and only if   $Q_D$ is  negative definite or $b^+(Q_D)=1$.

\end{prop}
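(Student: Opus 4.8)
The plan is to turn the whole statement into a question about a single linear-algebra object via the solvability criterion of Lemma~\ref{lem: non-degenerate intersection form}. Write $\pi\colon \R^{r(D)}\to H_2(X;\R)$ for the map $e_j\mapsto [C_j]$, with image $I_1$. Since the intersection numbers of the $C_i$ agree whether computed in $N_D$ or in $X$, we have $Q_D=\pi^*Q_X$; and the area vector satisfies $a\cdot u=[\omega]\cdot\pi(u)$ for every $u$. Because $Q_D$ is symmetric, $Q_Dz=a$ is solvable over $\R$ if and only if $a$ is orthogonal (for the standard dot product) to $\ker Q_D$, i.e.\ $[\omega]\cdot\pi(u)=0$ for all $u$ in the radical of $Q_D$. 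As $\pi(\ker Q_D)=I_1\cap I_1^{\perp}=:N$ is exactly the radical of $Q_X|_{I_1}$, the criterion becomes: $\omega|_{Y_D}$ is exact if and only if $[\omega]\cdot n=0$ for every $n\in N$. Here $Q_X$ is nondegenerate of signature $(1,\ast)$ since $X$ is closed with $b^+(X)=1$, and $b^+(Q_D)=b^+(Q_X|_{I_1})\in\{0,1\}$ by the opening Lemma.

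For the ``if'' direction I would split into the two allowed cases. If $Q_D$ is negative definite it is nondegenerate, so $\ker Q_D=0$, $N=0$, and solvability is automatic (as already observed in Lemma~\ref{lem: non-degenerate intersection form}). If instead $b^+(Q_D)=1$, with $Q_D$ possibly degenerate (this genuinely happens, e.g.\ $S(D)=(0,0,0,0)$), I would show $N=0$ by the light cone lemma: $Q_X|_{I_1}$ carries a positive vector $w$, while any $n\in N$ is $Q_X$-null and $Q_X$-orthogonal to $w$, hence lies in the negative-definite hyperplane $w^{\perp}$ and must vanish. With $N=0$ the criterion holds. Equivalently, $Q_X|_{I_1}$ is then nondegenerate, so $H_2(X;\R)=I_1\oplus I_1^{\perp}$ and one may invoke Lemma~\ref{lem: orthogonal decomposition} directly.

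For the ``only if'' direction I argue the contrapositive. Since $b^+(Q_D)\in\{0,1\}$, failing both ``negative definite'' and ``$b^+=1$'' means $Q_D$ is negative semidefinite and degenerate. Both the exactness condition (Lemma~\ref{lem: non-degenerate intersection form}) and this trichotomy are invariant under toric blow-up/down: nondegeneracy is preserved by the toric-invariance lemma of Section~2, and $b^+(Q_D)$ is preserved because a toric blow-up replaces $Q_X|_{I_1}$ by $Q_X|_{I_1}\oplus\langle -1\rangle$, the new exceptional class being $Q_X$-orthogonal to $I_1$ with square $-1$. I would then reduce via Lemma~\ref{lem: not negative semi-definite => at least one non-negative} to a toric minimal cycle or to $S(D)=(-1,p)$, these being the only reduced forms. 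In the toric minimal negative-semidefinite case every $s_i=-2$, so the all-ones vector $\mathbf 1$ lies in $\ker Q_D$ (indeed $(Q_D\mathbf 1)_i=[C_i]\cdot[D]=s_i+2=0$), while $a\cdot\mathbf 1=\sum_i[\omega]\cdot[C_i]=[\omega]\cdot[D]>0$ by positivity of areas; hence $Q_Dz=a$ has no solution and $\omega|_{Y_D}$ is not exact. The only other reduced form in this case is $(-1,-4)$, the single degenerate $(-1,p)$, where $\ker Q_D$ is spanned by $(2,1)$ and $a\cdot(2,1)=2\,[\omega]\cdot[C_1]+[\omega]\cdot[C_2]>0$, again obstructing solvability. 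Toric invariance of the criterion then returns the conclusion for the original $D$.

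The main obstacle is precisely this ``only if'' direction: degeneracy of $Q_D$ alone does \emph{not} force non-exactness, because a radical vector lying entirely in $\ker\pi$ pairs trivially with $[\omega]$. The real content is that in the negative-semidefinite degenerate case the radical must survive to a class of strictly positive symplectic area in $H_2(X;\R)$, and the cleanest way I see to secure this is the reduction to the all-$(-2)$ model, where that surviving class is exactly $[D]=\mathrm{PD}(-K_\omega)$ with $[\omega]\cdot[D]=-K_\omega\cdot[\omega]>0$.
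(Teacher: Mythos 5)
Your proposal is correct, but it reaches the conclusion by a genuinely different route from the paper in the one direction that carries all the difficulty, namely $b^+(Q_D)=1\Rightarrow$ exactness. The paper proves this by a case-by-case analysis over $r(D)$ and $r^{\geq 0}(D)$, using Lemmas \ref{lem: non-negative components} and \ref{lem: |D| > 4 => D at most two consecutive nonnegative spheres}, the toric move of Example \ref{eg: balancing self-intersection by $0$-sphere}, and the classification of minimal models to reduce to an explicit list of seven families, for each of which a subspace $I_2$ is exhibited so that Lemma \ref{lem: orthogonal decomposition} applies. You replace all of this with a single linear-algebra observation: the obstruction to solving $Q_Dz=a$ is exactly the pairing of $[\omega]$ with $N=I_1\cap I_1^{\perp_{Q_X}}=\pi(\ker Q_D)$, and when $b^+(Q_D)=1$ the light cone lemma (a positive vector $w\in I_1$ forces the $Q_X$-null, $w$-orthogonal space to vanish, since $w^{\perp}$ is negative definite in signature $(1,b^-)$) gives $N=0$ outright; equivalently $Q_X|_{I_1}$ is nondegenerate and Lemma \ref{lem: orthogonal decomposition} applies with $I_2=I_1^{\perp}$. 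This is shorter, uniform in $r(D)$, and needs neither the minimal-model list nor the combinatorial lemmas; what it gives up is the explicit description of the lifting classes that the paper's seven-item list records. Your ``only if'' direction is essentially the paper's: reduce by toric equivalence to the all-$(-2)$ cycle or to $(-1,-4)$ and exhibit a kernel vector ($\mathbf{1}$, resp.\ $(2,1)$) pairing positively with the area vector — and you correctly isolate the genuine subtlety that degeneracy of $Q_D$ alone is not enough, since radical vectors in $\ker\pi$ are invisible to $[\omega]$. Two small points to tidy up: the elliptic case $S(D)=(0)$ is not reached by the reduction to toric minimal cycles and should be dispatched separately (it is immediate from your criterion, with $N=\R[D]$ and $[\omega]\cdot[D]=-K_\omega\cdot[\omega]>0$), and the preservation of the trichotomy under toric blow-up/down is most quickly cited from Lemma 2.3 (nondegeneracy) together with your observation that $b^{\pm}$ changes by $\langle-1\rangle$ only.
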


This result is proved by the local criterion Lemma \ref{lem: non-degenerate intersection form},  Lemma \ref{lem: not negative semi-definite => at least one non-negative}, Lemma   \ref{lem: |D| > 4 => D at most two consecutive nonnegative spheres},  Lemma  \ref{lem: non-negative components}, the toric move in Example \ref{eg: balancing self-intersection by $0$-sphere}, and by applying the $I_2-$criterion Lemma \ref {lem: orthogonal decomposition} to the following list of log Calabi-Yau pairs $(X, D, \omega)$ with $r(D)\leq 4$ and 
$b^+(Q_D)=1$. 
\begin{enumerate}

\item (B2) in the list of minimal models;  $I_2=\emptyset$; $S(D)=(1, 4)$.

\item (C2) with $b=1$;  $I_2=\emptyset$; $S(D)=(2, 2)$.

\item (B3);  $I_2=\emptyset$; $S(D)=(1, 1, 1)$.

\item  Non-toric blow-ups of (B3) on $C_3$ and its proper transforms;  $I_2=\{e_j-e_{j+1}, 1\leq j\leq \alpha-1\}$; $S(D)=(1, 1, 1-\alpha)$.

\item  Non-toric blow-ups of (C3) on $C_3$ and its proper transforms;   $I_2=\{e_j-e_{j+1}, 1\leq j\leq \alpha-1\}$; $S(D)=(0, 0, 2-\alpha)$. 

\item  (C4) with $b=0$;  $I_2=\emptyset$; $S(D)=(0, 0, 0, 0)$.  

\item   Non-toric blow-ups of (C4) with $b=0$ on $C_4$ and its proper transforms;  $I_2=\{e_j-e_{j+1}, 1\leq j\leq \alpha-1\}$; $S(D)=(0, 0, 0, -\alpha)$.

\end{enumerate}

For Case (iii) of Theorem \ref{prop: convex-concave}, it follows from Proposition \ref{prop: exact} that $\omega$ is not exact on $Y_D$. 
For Case (i) of Theorem \ref{prop: convex-concave},   $Q_D$ is negative definite and hence there is a  convex plumbing neighborhood $N_D$ with contact boundary $(Y_D, \xi_D)$ by  Theorem \ref{GS}.
Notice that  $P=X-N_D$ is a symplectic cap of $Y_D$ with vanishing $c_1$,
namely, it  is a Calabi-Yau cap. It follows that $Kod(Y_D, \xi_D)\leq 0$. 
For Case (ii) of Theorem \ref{prop: convex-concave},  it follows from Theorem \ref{concave} and Proposition \ref{prop: exact} that, up to a local symplectic deformation,  there is a  concave plumbing neighborhood $N_D$ with contact boundary $(Y_D, \xi_D)$.  
Moreover, since $D$ is symplectic and represents $c_1(X)$, for any contact primitive $\alpha$ of $\omega|_{Y_D}$,  we have 
$c_1(N_D)\cdot [(\omega,\alpha)]=   c_1(X)|_{N_D} \cdot [(\omega,\alpha)] = D\cdot  [(\omega, \alpha)]=D\cdot [\omega]>0$. 
Thus $N_D$ is a uniruled cap. 



\begin{remark}\label{thm: as a support of ample line bundle}
Applying  Theorem \ref{prop: convex-concave}, Theorem  \ref{thm: symplectic deformation class=homology classes} 
and Proposition 4.1 in \cite{GrHaKe12}, it is not hard to prove the following statement: 
For  a symplectic log Calabi-Yau pair $(X, D, \omega)$ with 
$b^+(Q_D)= 1$,     there exists a K\"ahler log Calabi-Yau pair $(\overline{X},\overline{D},\overline{\omega})$ in its  symplectic deformation  class
such that $\overline{D}$ is the support of an ample line bundle. 
Then  $(\overline{X}-\overline{D},\overline{\omega})$ provides a Stein filling with $b^+=0$ and $c_1=0$. 
\end{remark}

\subsection{Symplectic fillings}


In the context of torus bundles,  Golla-Lisca investigated symplectic fillings in  the case     $b^+(Q_D)= 1$. Here is a summary of their results.

\begin{theorem} [Theorems 1.1, 3.1, 3.5  in \cite{GoLi14}]\label{thm: GoLi}  
For a large family $\cal F$ of torus bundles $T_A$ arising from $D$ with $b^+(Q_D)= 1$, all Stein fillings of $(T_A=Y_D, \xi_D)$ have $c_1=0$, $b_1=0$ and the same $b_2$.  
Moreover, up to diffeomorphism, there are
only finitely many Stein fillings, and there is a unique 
 Stein filling if $|tr A|<2$. Here $A$ is the monodromy matrix of $Y_D$. 
These results also hold for minimal symplectic fillings for this family, except possibly 3 torus bundles 
 with $|tr A|<2$.
 \end{theorem}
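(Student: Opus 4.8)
The plan is to reduce the classification of fillings to the (finite) classification of closed symplectic log Calabi-Yau pairs by a capping construction. Given a minimal symplectic filling $W$ of $(Y_D,\xi_D)$, I would invoke Theorem \ref{prop: convex-concave}(ii) and Theorem \ref{concave} to realize $(Y_D,\xi_D)$ as the contact boundary of a concave plumbing neighborhood $N_D$ of $D$, and then glue to form a closed symplectic $4$-manifold $Z=W\cup_{Y_D}N_D$. By construction $D\subset Z$ is a symplectic cycle of spheres Poincaré dual to $-K_Z$, so $(Z,D)$ is a symplectic log Calabi-Yau pair; since $D$ is a cycle of spheres, the opening Lemma of the Introduction forces $Z$ to be rational. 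As $S(D)$ is determined by $(Y_D,\xi_D)$, Theorem \ref{thm: symplectic deformation class=homology classes} and Corollary \ref{cor: finite deformation} give only finitely many symplectic deformation types of such pairs $(Z,D)$, each represented by a Kähler pair. This is the mechanism that converts finiteness of pairs into finiteness of fillings.

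The homological assertions then follow from Mayer-Vietoris for $Z=W\cup N_D$ together with Lemma \ref{lem: homology of neighborhood}. Writing $V=Z-N_D$, diffeomorphic to the interior of $W$, its third bullet gives $b_1(V)=0$ and $b_2(V)=b_2(Z)-r(D)-1$; since $b_2(Z)$ depends only on the deformation type of $(Z,D)$, all fillings coming from a fixed type share the same $b_2$ and vanishing $b_1$. A Novikov-additivity count shows the intersection form of $W$ is negative definite, i.e. $b^+(W)=0$, because the single positive direction of the rational surface $Z$ (which has $b^+=1$) is already carried by $b^+(Q_D)=1$ inside the cap. For the first Chern class I would note that $c_1(Z)$ is Poincaré dual to $[D]$, which is representable inside $N_D$ and hence disjoint from $W$, so $\langle c_1(Z),i_*\alpha\rangle=[D]\cdot i_*\alpha=0$ for every $\alpha\in H_2(W)$ under $i\colon W\hookrightarrow Z$; thus $c_1(W)=i^*c_1(Z)$ is torsion, and $b_1(W)=b^+(W)=0$ upgrades this to the stated $c_1=0$, in particular for a Looijenga $D$.

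For finiteness up to diffeomorphism I would argue that the diffeomorphism type of $W\cong Z-N_D$ is pinned down by $(Z,D)$ once the plumbing neighborhood of $D$ is fixed, using neighborhood uniqueness for symplectic divisors; combined with finiteness of deformation types this bounds the number of fillings. When $|\mathrm{tr}\,A|<2$ the monodromy is elliptic, so $Q_D$ is of the borderline type whose $S(D)$ sits among the short minimal models of Section 4 (e.g. $(2,2)$, $(1,1,1)$ and their small non-toric blow-ups), for which $(Z,D)$, and hence its complement, is unique up to deformation, yielding a unique Stein filling. For minimal symplectic fillings I would import Ohta-Ono's classification of fillings of simple elliptic singularities \cite{OhOn03}; the three exceptional torus bundles with $|\mathrm{tr}\,A|<2$ are exactly the simple-elliptic borderline cases where a minimal but non-Stein filling can appear and uniqueness fails.

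The main obstacle is controlling the filling from the closed pair: one must show that the concave cap $N_D$ embeds into $Z$ in an essentially unique way, so that $W$ is well defined up to diffeomorphism, and one must handle the fact that $Z$ need not be minimal—non-toric blow-ups introduce exceptional spheres that may or may not lie in $W$ and must be matched across competing fillings. This is precisely where pseudoholomorphic curve techniques and the marked-divisor invariance of \cite{LiMa16-deformation} enter, and where the delicate separation between Stein and merely minimal fillings (the origin of the three exceptions) is resolved.
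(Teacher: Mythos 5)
First, note that the paper offers no proof of this statement at all: it is quoted as a summary of Theorems 1.1, 3.1 and 3.5 of Golla--Lisca \cite{GoLi14}, whose arguments are lattice-theoretic and specific to their family $\mathcal F$ (embedding the intersection lattice of a filling and classifying the embeddings), not the cap-and-glue argument you propose. Your strategy is instead the one the survey uses for the weaker but more general Corollary \ref{cor: finite stein}, so it should be assessed on those terms.

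The central gap is your sentence ``By construction $D\subset Z$ is a symplectic cycle of spheres Poincar\'e dual to $-K_Z$.'' Nothing in the gluing construction guarantees this. After gluing a Stein filling $W$ to a concave neighborhood $N_D$ you obtain a closed symplectic $4$-manifold $Z$ containing a concave cycle of spheres with the prescribed self-intersection sequence and minimal complement; the assertion that such a cycle must represent $-K_Z$ is exactly the \emph{rigidity} of the sequence, i.e.\ the second half of Theorem \ref{theorem: anti-canonical}, which is established only through the case analysis of Propositions \ref{lem: positive parabolic} and \ref{lem: rigid} (and is genuinely delicate for $(-1,-2)$ and $(-1,-3)$). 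Assuming it ``by construction'' begs the question; without it you cannot invoke Theorem \ref{thm: symplectic deformation class=homology classes} or Corollary \ref{cor: finite deformation}. Secondary problems: Novikov additivity does not apply naively because $Y_D$ is a torus bundle with $b_1\ge 1$ rather than a rational homology sphere, so $b^+(W)=0$ requires the Mayer--Vietoris bookkeeping of Lemma \ref{lem: homology of neighborhood} as in Proposition \ref{p:AfterGlue}; ``$c_1(W)$ torsion'' together with $b_1(W)=0$ does not yield $c_1(W)=0$ unless you also control torsion in $H_1(W;\Z)$; and finiteness of symplectic deformation types of the closed pairs $(Z,D)$ does not by itself bound the diffeomorphism types of the complements $Z-N_D$, let alone force them to share the same $b_2$ or be unique when $|\mathrm{tr}\,A|<2$ --- pinning down the complement is precisely where Golla--Lisca's actual work (and the origin of the three exceptional torus bundles) lies, and your sketch acknowledges but does not supply a substitute for it.
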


According to Corollary \ref{cor: finite stein}, the finiteness property holds more generally. 



\begin{proof}[Proof of Corollary \ref{cor: finite stein}]  
By  Theorem  \ref{prop: convex-concave} $(Y_D, \xi_D)$ is fillable and all the symplectic fillings have $b^+=0$. 
For  Looijenga pairs, the Stein filliability follows from Remark    \ref{thm: as a support of ample line bundle}. For an elliptic pair with self-intersection $s>0$, there is an obvious
Stein filling diffeomorphic to the neighborhood of a torus with self-intersection $-s$. The finiteness of symplectic fillings for elliptic pairs  is proved in \cite{OhOn03} (see Theorem \ref{OO-simple elliptic}).

Now observe that if $D$ is  concave and (Stein) rigid  then  any (Stein) symplectic  filling of $(Y_D, \xi_D)$ is the complement of a symplectic log CY pair with the same self-intersection  sequence. 
Now we invoke   the second statement of Theorem \ref{theorem: anti-canonical} and  Corollary \ref{cor: finite deformation} to conclude the finiteness of Stein symplectic fillings for all Looijenga pairs and the finiteness of symplectic fillings except  for the toric equivalence classes of $(-1, -2), (-1, -3)$.  
Clearly, the fillings have vanishing $c_1$.

\end{proof}

Together with  Theorems  1.3 and 1.8 in \cite{LiMaYa14}, Theorem \ref{prop: convex-concave} has the following consequence:
when $Q_D$ is negative definite, the Betti numbers of exact fillings of $(Y_D, \xi_D)$ are bounded.  For elliptic pairs, we have the following:

\begin{theorem}  [Theorem 2 in \cite{OhOn03}] \label{OO-simple elliptic} Any simple elliptic singularity  has  finite number of symplectic   fillings,   arising either from  a smoothing or the minimal resolution .  
\end{theorem}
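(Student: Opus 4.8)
The plan is to cap an arbitrary minimal symplectic filling so as to form a closed symplectic $4$-manifold, and then to read off the dichotomy \emph{resolution versus smoothing} from whether this closed manifold is rational or irrationally ruled. Write the simple elliptic singularity so that its minimal resolution has exceptional divisor a single smooth symplectic torus $E$ with $E\cdot E=-d$, $d\geq 1$; the link $(Y,\xi)$ is then the circle bundle over $T^2$ of Euler number $-d$, carrying the canonical contact structure that bounds the (negative definite, hence convex by Theorem \ref{GS}) resolution neighborhood. The smoothable range is $1\leq d\leq 9$, which is precisely where $(d)$ is an anti-canonical sequence, and this is the case I would treat.

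First I would build a concave cap from the dual torus pair. Choose an elliptic symplectic log Calabi-Yau pair $(X_0,T_0,\omega_0)$ whose divisor is a symplectic torus with $T_0\cdot T_0=+d$. Since $b^+(Q_{T_0})=1$, Proposition \ref{prop: exact} gives exactness on the boundary, Theorem \ref{concave} produces a concave plumbing neighborhood $N$ of $T_0$ whose contact boundary depends only on $(d)$, and the argument in Theorem \ref{prop: convex-concave}(ii) shows that $N$ is a uniruled cap. Using the duality of the elliptic pairs $(d)$ and $(-d)$, whose boundary circle bundles are orientation-reversing diffeomorphic, the concave side of $N$ is identified with the link $(Y,\xi)$. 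Hence any minimal strong symplectic filling $(W,\omega)$ glues to $N$ along $(Y,\xi)$ to form a closed symplectic $4$-manifold $(Z,\Omega)=W\cup_Y N$ carrying the symplectic torus $T:=T_0$ with $T\cdot T=d$. Because $Z$ contains the uniruled cap $N$, by \cite{LiMaYa14} together with \cite{Liu96}, \cite{OhOn96} it is symplectically rational (with $b_1(Z)=0$) or ruled over $T^2$ (with $b_1(Z)=2$).

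Next I would split into these two cases. If $Z$ is ruled over $T^2$, then $T$ is a positive section and minimality of $W$ forces every exceptional sphere to meet $T$ (otherwise it would lie in $W=Z-N$); blowing down to an $S^2$-bundle over $T^2$ exhibits $T$ and a complementary section $E$ with $E\cdot E=-d$, and $W=Z-N$ is the disk bundle neighborhood of $E$, i.e.\ the minimal resolution, which is the unique filling here. If instead $Z$ is rational, the symplectic adjunction formula gives $K_\Omega\cdot[T]=2g(T)-2-T\cdot T=-d$, and again minimality of $W$ forces every exceptional sphere of $Z$ to meet $T$. Choosing exceptional representatives meeting $T$ transversally once and performing non-toric blow-downs following \cite{McOp13}, I would reduce $(Z,T)$ to a minimal rational surface carrying a genus-one symplectic torus, which is necessarily anti-canonical; reversing the blow-downs, all centered at points of the torus, then shows $[T]=\mathrm{PD}(-K_\Omega)$, so that $(Z,T,\Omega)$ is an elliptic symplectic log Calabi-Yau pair with $S(T)=(d)$ and $b_2(Z)=10-d$. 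Corollary \ref{cor: finite deformation}, whose proof for length-one sequences is independent of the present theorem, then yields finitely many deformation classes of such $(Z,T,\Omega)$; excising the standard cap $N$ shows that the rational-case fillings fall into finitely many symplectic deformation classes, and deforming to a K\"ahler model via Theorem \ref{thm: symplectic deformation class=homology classes} identifies each of them with the Milnor fiber of a smoothing of the singularity.

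The hard part will be the case analysis on $Z$ and, within the rational case, the reduction to the minimal model: one must show that minimality of the filling really does force every exceptional class of $Z$ to pair positively with $T$, and that an exceptional representative can be chosen meeting $T$ transversally in a single point so that the non-toric blow-down preserves the embedded symplectic torus. This is the step that converts the soft numerical relation $K_\Omega\cdot[T]=-T\cdot T$ into the genuine anti-canonical identity $[T]=\mathrm{PD}(-K_\Omega)$, and hence the step that makes the log Calabi-Yau finiteness of Corollary \ref{cor: finite deformation} applicable; it rests on the pseudo-holomorphic curve theory for rational and ruled symplectic $4$-manifolds. A secondary subtlety is to promote finiteness up to diffeomorphism to finiteness up to symplectic deformation, for which the uniqueness of symplectic structures on rational and ruled surfaces is used.
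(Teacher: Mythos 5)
A point of context first: the survey does not prove this statement at all---it is quoted verbatim as Theorem 2 of \cite{OhOn03} and is then \emph{used} as an input (it supplies the elliptic case of Corollary \ref{cor: finite stein}). Your capping strategy is essentially the one Ohta and Ono themselves employ: glue an arbitrary minimal filling of the link to a concave neighborhood of a symplectic torus of square $+d$, conclude from the uniruled cap that the closed manifold is rational or ruled, and read off \emph{resolution versus smoothing} from the two cases. As a reconstruction of the original argument the outline is on target, and you have correctly isolated the genuinely hard step: upgrading the numerical identity $K_\Omega\cdot[T]=-T\cdot T$ to $[T]=\mathrm{PD}(-K_\Omega)$ in the rational case. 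Be aware that your proposed mechanism there (blow down exceptional spheres meeting $T$ transversally once) does not quite work as stated, since an exceptional class can pair with $[T]$ with multiplicity $\geq 2$ (e.g.\ $[T]=4h-2e_1-2e_2-e_3-\cdots$ satisfies adjunction with genus one); such classes are excluded only because minimality of $W$ rules out \emph{every} exceptional class pairing trivially with $[T]$ (here $h-e_1-e_2$), and one then needs a lattice-theoretic argument, not a blow-down induction, to force $[T]=-K_\Omega$. This is the content of Theorem 6.10 in \cite{OhOn03}, which your outline defers rather than supplies. You also silently restrict to $1\leq d\leq 9$; the case $d\geq 10$ still requires the capping argument to show the resolution is the only filling.

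Two further issues. The more serious is circularity relative to this paper: you invoke Corollary \ref{cor: finite deformation} and Theorem \ref{thm: symplectic deformation class=homology classes} in the length-one case, but the deformation classification of minimal elliptic log Calabi--Yau pairs in \cite{LiMa16-deformation} (hence the elliptic case of Corollary \ref{cor: finite deformation}) itself draws on \cite{OhOn03}, and the survey in turn cites Theorem \ref{OO-simple elliptic} precisely to handle elliptic pairs in Corollary \ref{cor: finite stein}. If your argument is meant as an independent proof of the Ohta--Ono theorem, you must either establish the isotopy/deformation uniqueness of anti-canonical tori without appeal to \cite{OhOn03} or accept that you are rederiving the theorem from results downstream of it. The second, smaller, issue is the gluing step: the orientation-reversing diffeomorphism between the boundary circle bundles of the $(d)$ and $(-d)$ disk bundles is not enough to attach a filling of the link to your cap; you need the concave boundary of $N$ to be contactomorphic to the Milnor-fillable contact structure on the link, and Theorem \ref{prop: convex-concave}(ii) only tells you that $\xi_D$ depends on $S(D)$, not that it agrees with the canonical contact structure of the singularity. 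That identification is true but requires a separate (standard) argument.
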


For Looijenga pairs, when $D$ is negative definite and toric minimal, $\xi_D$ coincides with the contact structure arising from the corresponding cusp singularity  and hence is Stein fillable with a Stein filling diffeomorphic to $N_D$. Notice that  $b_1(N_D)=1$ by Lemma \ref{lem: homology of neighborhood}.
 We provide some explicit Betti number bounds for Stein fillings below when $D$ is negative definite. 

\begin{prop}[\cite{LiMa17-contact}]\label{p:AfterGlue}
Suppose that  $D$ is toric minimal and negative definite and $V=X - N_D$.
If $U$ is a Stein filling of $Y_D$, then $X_U=U \cup V$ has either $b^+=1$ or $3$, and  $b^+(X)=1+b^+(U)+b_2^0(U), 
b_2^0(U)+b_1(U)=1$. 

When $b^+(X_U)=1$,  $X_U$ is  rational or an integral homology Enriques surface, and $U$ is negative definite with $b_1(U)=1$. In this case $e(U)=b^-(U)$, where $e$ is the Euler number. 

When $b^+(X_U)=3$, $X_U$ is an integral homology $K3$, $(b_2^+(U),b_2^0(U),b_1(U))=(1,1,0)$ or $(2,0,1)$.
In either case, $c_1(U)=0$ and $2\leq e(U)\leq 21$. 
\end{prop}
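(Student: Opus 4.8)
The plan is to realize $X_U$ as a symplectic glue-up and then read off its invariants by combining Mayer--Vietoris bookkeeping with the classification of symplectic $4$-manifolds of non-positive Kodaira dimension. First I would set up the gluing. Since $Q_D$ is negative definite, $D$ is convex by Theorem~\ref{GS}, so $N_D$ is a convex neighborhood with contact boundary $(Y_D,\xi_D)$ and $V=X-N_D$ is a concave cap of $(Y_D,\xi_D)$. Moreover $V$ is a \emph{Calabi--Yau cap}: since $c_1(X,\omega)=\mathrm{PD}[D]$ is represented inside $N_D$, its restriction $c_1(V)$ is torsion. Gluing the Stein (convex) filling $U$ to the concave cap $V$ along $(Y_D,\xi_D)$ produces the closed symplectic $4$-manifold $X_U=U\cup V$, for which $e(X_U)=e(U)+e(V)$ and, by Novikov additivity, $\sigma(X_U)=\sigma(U)+\sigma(V)$.

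The technical core is a homological gluing lemma. By Lemma~\ref{lem: property of continuous fraction}, $Y_D$ is a torus bundle over $S^1$, and since $Q_D$ is non-degenerate its real Betti numbers are $(1,1,1,1)$, with $H_2(Y_D;\R)$ generated by the fiber class $T$. I would feed into Mayer--Vietoris the vanishing $H_3(U;\R)=H_4(U;\R)=0$ (a Stein domain admits a handle decomposition with indices $\le 2$) together with the facts $b_1(V)=0$ and $H_2(Y_D)\hookrightarrow H_2(V)$ from Lemma~\ref{lem: homology of neighborhood}. This forces $b_3(X_U)=0$, hence $b_1(X_U)=0$ and $b_1(U)\le 1$. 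The dichotomy $b_2^0(U)+b_1(U)=1$ then follows: if $b_1(U)=0$, the long exact sequence of $(U,Y_D)$ (with $H_3(U,Y_D;\R)\cong H^1(U;\R)=0$) shows $T$ survives in $H_2(U)$, so $b_2^0(U)=1$; if $b_1(U)=1$, the image of $T$ lies in the radical of the intersection form of the closed manifold $X_U$, which by Poincar\'e duality is non-degenerate, forcing $[T]=0$ and $b_2^0(U)=0$. Given this dichotomy, combining the Mayer--Vietoris count $b_2(X_U)=b_2(U)+b_2(V)-b_1(U)$ with Novikov additivity yields $b^+(X_U)=b^+(U)+b^+(V)+b_2^0(U)$. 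Applying the identical analysis to $X=N_D\cup V$ (here $N_D$ is negative definite with $b_1(N_D)=1$ and $b_2^0(N_D)=0$, the latter because $H_2(Y_D)\to H_2(N_D)$ is trivial by Lemma~\ref{lem: homology of neighborhood}) and using that $(X,\omega)$ is rational or ruled, so $b^+(X)=1$, gives $b^+(V)=1$; hence $b^+(X_U)=1+b^+(U)+b_2^0(U)$, which is the displayed identity.

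Finally I would invoke the geometry of the cap. Because $V$ is a Calabi--Yau cap, the results of \cite{LiMaYa14} bound $X_U$ to have $\kappa(X_U)\le 0$; combined with $b_1(X_U)=0$ and the classification of symplectic $4$-manifolds with $\kappa\le 0$, the closed manifold $X_U$ is rational, an integral homology Enriques surface ($b^+=1$), or an integral homology K3 surface ($b^+=3$), giving $b^+(X_U)\in\{1,3\}$. Reading the two formulas off against each case yields the conclusions: when $b^+(X_U)=1$ one gets $b^+(U)=b_2^0(U)=0$, so $U$ is negative definite with $b_1(U)=1$ and $e(U)=1-b_1(U)+b_2(U)=b^-(U)$; when $b^+(X_U)=3$ one gets $b^+(U)+b_2^0(U)=2$, i.e.\ $(b_2^+(U),b_2^0(U),b_1(U))\in\{(1,1,0),(2,0,1)\}$, while $c_1(X_U)=0$ forces $c_1(U)=0$, and $e(X_U)=24$ together with $b^+(V)=1$ and non-negativity of the Betti numbers of $U$ and $V$ pins $e(U)$ into the range $[2,21]$.

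I expect the main obstacle to be the homological gluing lemma of the second step, specifically the exact bookkeeping of the degenerate fiber-torus class $T$ under the non-degeneracy of the intersection form of $X_U$: the interplay of Mayer--Vietoris, the long exact sequence of $(U,Y_D)$, and Poincar\'e duality must be arranged to produce the clean identity $b_2^0(U)+b_1(U)=1$ and the precise jump $b^+(X_U)=b^+(U)+b^+(V)+b_2^0(U)$. The second essential input, used as a black box, is the Calabi--Yau-cap Kodaira bound of \cite{LiMaYa14} with the classification of $\kappa\le 0$ symplectic $4$-manifolds, which is what confines $b^+(X_U)$ to $\{1,3\}$ and supplies the integral homology types.
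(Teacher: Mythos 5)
Your argument is correct and follows essentially the same route as the paper's proof: Mayer--Vietoris bookkeeping for $U\cup V$ glued along the torus bundle $Y_D$, the dichotomy $b_2^0(U)+b_1(U)=1$ governed by the fate of the fiber class of $Y_D$ (injectivity of $H_2(Y_D;\Q)\to H_2(V;\Q)$ versus non-degeneracy of the closed intersection form), the Calabi--Yau cap theorem of \cite{LiMaYa14} to confine $X_U$ to rational, homology Enriques, or homology $K3$, and a case-by-case readout. The only harmless variations are that you obtain $b^+(X_U)=b^+(U)+b^+(V)+b_2^0(U)$ from Novikov additivity rather than the paper's explicit orthogonal decomposition of $H_2(X_U;\Q)$ by a hyperbolic pair, and you derive $e(V)\ge 3$ (hence $e(U)\le 21$) from the Betti numbers of $V$ rather than citing Friedman--Miranda.
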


Finally, we discuss the potential implication of Proposition \ref{p:AfterGlue} for Stein fillings of   cusp singularities. 
By the now confirmed Looijenga conjecture which states that  a cusp singularity is smoothable if and only if has a rational dual,  a smoothing of a cusp singularity provides a Stein filling with $b^+=1$.  In light of this,  Proposition \ref{p:AfterGlue} provides some evidence to 
the following  symplectic/contact  analogue of the Looijenga conjecture. 

\begin{speculation}
If a  cusp  singularity  does not have  a rational dual, then  it admits only negative definite Stein fillings.
\end{speculation}


\bibliography{May2018}

\begin{thebibliography}{10}

\bibitem{Auroux}
Denis Auroux.
\newblock Mirror symmetry and {$T$}-duality in the complement of an
  anticanonical divisor.
\newblock {\em J. G\"okova Geom. Topol. GGT}, 1:51--91, 2007.

\bibitem{Ba}
Jean-Fran\c{c}ois Barraud.
\newblock Nodal symplectic spheres in {$\bold C{\rm P}^2$} with positive
  self-intersection.
\newblock {\em Internat. Math. Res. Notices}, (9):495--508, 1999.

\bibitem{En}
Philip~Milton Engel.
\newblock {\em A {P}roof of {L}ooijenga's {C}onjecture via {I}ntegral-{A}ffine
  {G}eometry}.
\newblock ProQuest LLC, Ann Arbor, MI, 2015.
\newblock Thesis (Ph.D.)--Columbia University.

\bibitem{Fr}
Robert Friedman.
\newblock On the geometry of anticanonical pairs.
\newblock {\em arXiv:1502.02560}, 2015.

\bibitem{GaSt09}
David~T. Gay and Andr\'as~I. Stipsicz.
\newblock Symplectic surgeries and normal surface singularities.
\newblock {\em Algebr. Geom. Topol.}, 9(4):2203--2223, 2009.

\bibitem{GoLi14}
Marco Golla and Paolo Lisca.
\newblock On {S}tein fillings of contact torus bundles.
\newblock {\em Bull. Lond. Math. Soc.}, 48(1):19--37, 2016.

\bibitem{GrHaKe11}
Mark Gross, Paul Hacking, and Sean Keel.
\newblock Mirror symmetry for log {C}alabi-{Y}au surfaces {I}.
\newblock {\em Publ. Math. Inst. Hautes \'Etudes Sci.}, 122:65--168, 2015.

\bibitem{GrHaKe12}
Mark Gross, Paul Hacking, and Sean Keel.
\newblock Moduli of surfaces with an anti-canonical cycle.
\newblock {\em Compos. Math.}, 151(2):265--291, 2015.

\bibitem{GHKK}
Mark Gross, Paul Hacking, Sean Keel, and Maxim Kontsevich.
\newblock Canonical bases for cluster algebras.
\newblock {\em J. Amer. Math. Soc.}, 31(2):497--608, 2018.

\bibitem{GHS}
Mark Gross, Paul Hacking, and Bernd Siebert.
\newblock Theta functions on varieties with effective anti-canonical class.
\newblock {\em arXiv:1601.07081}, 2016.

\bibitem{LiMa17-contact}
Tian-Jun Li and Cheuk~Yu Mak.
\newblock Symplectic log calabi-yau surface--contact aspects.
\newblock {\em In preparation.}

\bibitem{LiMa14}
Tian-Jun Li and Cheuk~Yu Mak.
\newblock Symplectic divisorial capping in dimension 4.
\newblock {\em arXiv:1407.0564}, 2014.

\bibitem{LiMa16}
Tian-Jun Li and Cheuk~Yu Mak.
\newblock The kodaira dimension of contact 3-manifolds and geography of
  symplectic fillings.
\newblock {\em arXiv:1610.06870}, 2016.

\bibitem{LiMa16-deformation}
Tian-Jun Li and Cheuk~Yu Mak.
\newblock Symplectic log {C}alabi-{Y}au surface---deformation class.
\newblock {\em Adv. Theor. Math. Phys.}, 20(2):351--379, 2016.

\bibitem{LiMaYa14}
Tian-Jun Li, Cheuk~Yu Mak, and Kouichi Yasui.
\newblock Calabi-{Y}au caps, uniruled caps and symplectic fillings.
\newblock {\em Proc. Lond. Math. Soc. (3)}, 114(1):159--187, 2017.

\bibitem{LiZh11}
Tian-Jun Li and Weiyi Zhang.
\newblock Additivity and relative {K}odaira dimensions.
\newblock In {\em Geometry and analysis. {N}o. 2}, volume~18 of {\em Adv. Lect.
  Math. (ALM)}, pages 103--135. Int. Press, Somerville, MA, 2011.

\bibitem{Liu96}
Ai-Ko Liu.
\newblock Some new applications of general wall crossing formula, {G}ompf's
  conjecture and its applications.
\newblock {\em Math. Res. Lett.}, 3(5):569--585, 1996.

\bibitem{McOp13}
Dusa McDuff and Emmanuel Opshtein.
\newblock Nongeneric {$J$}-holomorphic curves and singular inflation.
\newblock {\em Algebr. Geom. Topol.}, 15(1):231--286, 2015.

\bibitem{McL12}
Mark McLean.
\newblock The growth rate of symplectic homology and affine varieties.
\newblock {\em Geom. Funct. Anal.}, 22(2):369--442, 2012.

\bibitem{Ne81}
Walter~D. Neumann.
\newblock A calculus for plumbing applied to the topology of complex surface
  singularities and degenerating complex curves.
\newblock {\em Trans. Amer. Math. Soc.}, 268(2):299--344, 1981.

\bibitem{OhOn96}
Hiroshi Ohta and Kaoru Ono.
\newblock Notes on symplectic {$4$}-manifolds with {$b^+_2=1$}. {II}.
\newblock {\em Internat. J. Math.}, 7(6):755--770, 1996.

\bibitem{OhOn03}
Hiroshi Ohta and Kaoru Ono.
\newblock Symplectic fillings of the link of simple elliptic singularities.
\newblock {\em J. Reine Angew. Math.}, 565:183--205, 2003.

\bibitem{OhOn05-cuspidal}
Hiroshi Ohta and Kaoru Ono.
\newblock Symplectic 4-manifolds containing singular rational curves with
  {$(2,3)$}-cusp.
\newblock In {\em Singularit\'es {F}ranco-{J}aponaises}, volume~10 of {\em
  S\'emin. Congr.}, pages 233--241. Soc. Math. France, Paris, 2005.

\bibitem{OhOn09}
Hiroshi Ohta and Kaoru Ono.
\newblock An inequality for symplectic fillings of the link of a hypersurface
  {$K3$} singularity.
\newblock In {\em Algebraic topology---old and new}, volume~85 of {\em Banach
  Center Publ.}, pages 93--100. Polish Acad. Sci. Inst. Math., Warsaw, 2009.

\bibitem{Pa13}
James Pascaleff.
\newblock On the symplectic cohomology of log calabi-yau surfaces.
\newblock {\em arXiv:1304.5298}, 2013.

\bibitem{Pi08}
Martin Pinsonnault.
\newblock Maximal compact tori in the {H}amiltonian group of 4-dimensional
  symplectic manifolds.
\newblock {\em J. Mod. Dyn.}, 2(3):431--455, 2008.

\bibitem{Sa13}
Dietmar Salamon.
\newblock Uniqueness of symplectic structures.
\newblock {\em Acta Math. Vietnam.}, 38(1):123--144, 2013.

\end{thebibliography}


\begin{thebibliography}{99}






\bibitem{Ba}
J-F Barraud
\newblock Nodal symplectic spheres in CP2 with positive self-intersection.
\newblock{\em  Internat.
Math. Res. Notices}, (1999) 495-508.


\bibitem{En}
P. Engel
\newblock A proof of Looijenga's conjecture via integral-affine geometry.
\newblock   arXiv: 1409.7676.


\bibitem{Fr}
R. Friedman.
\newblock On the geometry of anticanonical pairs.
\newblock arXiv:1502.02560.

\bibitem{FM83}
R. Friedman and R. Miranda.
\newblock Smoothing cusp singularities of small length.
\newblock {\em Math. Ann.} 263 (1983), 185-212.

\bibitem{GaSt09}
D.T.~Gay and A.I.~Stipsicz.
\newblock Symplectic surgeries and normal surface singularities.
\newblock {\em Algebr. Geom. Topol.}, 9(4):2203-2223, 2009.

\bibitem{GoLi14}
M.~Golla and P.~Lisca.
\newblock On Stein fillings of contact torus bundles.
\newblock  Bull. Lond. Math. Soc. 48 (2016) 19-37.

\bibitem{Gom95}
R.E.~Gompf.
\newblock A new construction of symplectic manifolds.
\newblock {\em Ann. of Math.}, 143(3):527-595, 1995.

\bibitem{GrHaKe11}
M.~Gross, P.~Hacking and S.~Keel.
\newblock Mirror symmetry for log Calabi-Yau surfaces I.
\newblock arXiv:1106.4977, 2011.

\bibitem{GrHaKe12}
M.~Gross, P.~Hacking and S.~Keel.
\newblock Moduli of surfaces with an anti-canonical Cy.
\newblock arXiv:1211.6367, 2012.





\bibitem{LaMc96-2}
F.~Lalonde and D.~McDuff.
\newblock J-curves and the classification of ruled symplectic 4-manifolds.
\newblock {\em Contact and symplectic geometry Publ. Newton Inst. 8}, (Cambridge, 1994), 3-42, Cambridge Univ. Press, Cambridge, 1996




\bibitem{LiMa14}
T-J. Li and C.Y. Mak.
\newblock Symplectic Divisorial Capping in Dimension 4.
\newblock arXiv:1407.0564.

\bibitem{LiMa16-deformation}
T-J. Li and C.Y. Mak.
\newblock Symplectic log Calabi-Yau surface--deformation class. 
\newblock Adv. Theor. Math. Phys. 20 (2016), no. 2, 351-379.


\bibitem{LiMa16}
T-J. Li and C.Y. Mak.
\newblock The Kodaira dimension of contact 3-manifolds and geography of symplectic fillings
\newblock arXiv: 1610.06870.

\bibitem{LiMa17-contact}
T-J. Li and C.Y. Mak.
\newblock Symplectic log Calabi-Yau surface--contact aspects. 
\newblock In preparation.

\bibitem{LiMaYa14}
T-J. Li, C.Y. Mak and K. Yasui.
\newblock  Calabi-Yau caps,   uniruled caps and symplectic fillings.
\newblock {\em Proc. Lond. Math. Soc.}, (3) 114, (2017), no. 1,  159-187.      




\bibitem{LiZh11}
T-J. Li and W. Zhang.
\newblock Additive and relative Kodaira dimension.
\newblock {\em Geometry and Analysis, No.2} 18:103-135, Adv. Lect. Math.(ALM), Int. Press., 2011.



\bibitem{Liu96}
A. Liu.
\newblock Some new applications of general wall crossing formula, Gompf's conjecture and its applications. 
\newblock {\em Math. Res. Lett.} 3(5): 569-585, 1996. 

\bibitem{Lo81}
E.~Looijenga.
\newblock Rational surfaces with an anti-canonical divisor.
\newblock {\em Ann. of Math.} 114(2): 267-322, 1981.




\bibitem{McOp13}
D.~McDuff and E.~Opshtein.
\newblock Nongeneric J-holomorphic curves and singular inflation.
\newblock {\em Algebr. Geom. Topol.} 15: 231-286, 2015.

\bibitem{McL12}
M.~McLean.
\newblock The growth rate of symplectic homology and affine varieties.
\newblock {\em Geometric And Functional Analysis.}, 22(2): 369-442, 2012.

\bibitem{Ne81}
W.D.~Neumann.
\newblock A calculus for plumbing applied to the topology of complex surface singularities and degenerating complex curves.
\newblock {\em TAMS},   268 (2):299-344, 1981.

\bibitem{OhOn96}
H.~Ohta and K.~Ono.
\newblock   Notes on symplectic 4-manifolds with $b^+=1$,  II.
\newblock {\em Int. J. Math. } 7: 755-770, 1996. 

\bibitem{OhOn05}
H.~Ohta and K.~Ono.
\newblock Simple singularities and symplectic fillings.
\newblock {\em J. Diff. Geom.}, 69(1):1-42, 2005.

\bibitem{OhOn05-cuspidal}
H.~Ohta and K.~Ono.
\newblock Symplectic 4-manifolds containing singular rational curves with $(2, 3)-$cusp
\newblock {\em Seminares \& Congres} 10 (2005), 233-241.

\bibitem{OhOn03}
H.~Ohta and K.~Ono.
\newblock Symplectic fillings of the link of simple elliptic singularities. 
\newblock {\em J. Reine Angew. Math.} 565 (2003), 183-205

\bibitem{OhOn09}  
H.~Ohta and K.~Ono.
\newblock An inequality for symplectic fillings of the link of a hypersurface K3 singularity. Algebraic topology: old and new, 93-100, \newblock Banach Center Publ., 85, Polish Acad. Sci. Inst. Math., Warsaw, 2009

\bibitem{Pa13}
J.~Pascaleff.
\newblock On the symplectic cohomology of log Calabi-Yau surfaces.
\newblock arXiv:1304.5298, 2013.

\bibitem{Pi08}
M.~Pinsonnault.
\newblock Maximal compact tori in the Hamiltonian group of 4-dimensional symplectic manifolds.
\newblock {\em J. Mod. Dyn.}, 2(3): 431-455, 2008.

\bibitem{Sa13}
D.~Salamon.
\newblock Uniqueness of symplectic structures.
\newblock {\em Acta Math. Vietnam.}, 38(1): 123-144, 2013.





\bibitem{Ta95}
C.H.~Taubes.
\newblock The Seiberg-Witten and Gromov invariants.
\newblock {\em Math. Res. Lett.} 2(2): 221-238, 1995.

\bibitem{Us09}
M.~Usher.
\newblock Kodaira dimension and symplectic sums.
\newblock {\em Comment. Math. Helv.} 1(84): 57-85, 2009.

\end{thebibliography}
\bibliographystyle{plain}

\end{document}